\documentclass{article}
\usepackage{amsmath, amssymb, amsthm} 
\usepackage{geometry}
\usepackage{enumerate}
\usepackage{hyperref}

\allowdisplaybreaks
\numberwithin{equation}{section}

\theoremstyle{plain} 
\newtheorem{theorem}{Theorem}[section] 
\newtheorem{lemma}[theorem]{Lemma}     
\newtheorem{corollary}[theorem]{Corollary}
\newtheorem{proposition}[theorem]{Proposition}

\theoremstyle{definition}
\newtheorem{definition}[theorem]{Definition}
\newtheorem{example}[theorem]{Example}
\newtheorem{remark}[theorem]{Remark}

\title{Embeddings of Generalized Weighted Morrey Spaces}
\author{Zhen Liu}
\date{\today}

\begin{document}

\maketitle

\begin{abstract}
We study continuous embeddings from two classes of generalized weighted Morrey spaces into generalized Morrey spaces. 
The Komori--Shirai-type spaces use the weighted measure of cubes in their normalization, 
whereas the Samko-type spaces use the Lebesgue measure.
We first consider generalized Morrey spaces on bounded domains. 
In the non-radial setting, we establish embedding results under local radial comparability assumptions on the Morrey functions. 
As a consequence, we characterize the corresponding embeddings in the radial setting for the full range $0<p_1,p_2<\infty$. 
We then study embeddings from generalized Komori--Shirai-type weighted Morrey spaces associated with piecewise power weights and general Muckenhoupt weights. 
For the Samko-type scale, we treat both generalized and classical weighted Morrey spaces, 
again considering piecewise power weights and general Muckenhoupt weights. 
Finally, assuming that the weight is comparable to a positive constant on some cube or ball, 
we show that the continuous embeddings under consideration are not compact.
\end{abstract}

\vspace{0.2cm}
{\bfseries 2020 MSC:} 46E35
\vspace{0.2cm}

{\bfseries Keywords:} generalized Morrey spaces, generalized weighted Morrey spaces, Muckenhoupt weights, embeddings, bounded domains, non-compactness

\section{Introduction}\label{sec:intro}

The classical Morrey spaces $M_u^p(\mathbb R^d)$,
$0<p\leq u<\infty$, were introduced by Morrey \cite{Mor38}. 
They consist of all $f\in L^p_{\mathrm{loc}}(\mathbb R^d)$ such that
\[
    \|f\mid M_u^p(\mathbb R^d)\|
    :=
    \sup_{x\in\mathbb R^d,\, r>0} r^{d/u}
    \Bigl(\frac{1}{|Q(x,r)|}
    \int_{Q(x,r)}|f(y)|^p\,dy\Bigr)^{1/p}
    <\infty.
\]
When $u=p$, one recovers $L^p(\mathbb R^d)$, with equivalent quasi-norms. 
Morrey spaces have become an important class of function spaces in harmonic analysis, potential theory, and the theory of partial differential equations; see, for example, \cite{RSS13,SDH20a}.

A natural generalization is obtained by replacing the power function $r^{d/u}$ in the above quasi-norm by a more general function $\varphi$. 
The generalized Morrey spaces $M_\varphi^p(\mathbb R^d)$ were studied independently by Mizuhara \cite{Miz91} and Nakai \cite{N94}. 
Their counterparts on scales of smoothness spaces have subsequently been investigated in, for example, \cite{NNS16,HL23,HLMS25}. 
We refer to \cite{AGNS17,SDH20b} and the references therein for further information.

There are also two well-studied weighted versions of Morrey spaces.
The first, introduced by Komori and Shirai \cite{KS09}, uses the weighted measure of a cube in the normalization, 
whereas the second, introduced by Samko \cite{Sam09}, uses its Lebesgue measure.
Generalizations of these two weighted scales were introduced by Guliyev \cite{Gul12} and Nakamura \cite{Nak16}, respectively.
Weighted Morrey spaces and their generalizations have also been studied in connection with partial differential equations; see, for example, \cite{DFN20,LM24,SFZ13,BS15,CZ26,GMOS16,SK24}.

In the notation of this paper, the Komori--Shirai-type space is denoted by $\mathcal M$, 
whereas the Samko-type space is denoted by $\mathfrak M$. 
For a cube $Q$ and a measurable function $f$, 
the corresponding local quantities in the generalized setting are
\[
 \frac{\varphi(Q)}{w(Q)^{1/p}}
 \Bigl(\int_Q |f|^p w\Bigr)^{1/p}
 \qquad\text{and}\qquad
 \frac{\varphi(\ell(Q))}{|Q|^{1/p}}
 \Bigl(\int_Q |f|^p w\Bigr)^{1/p},
\]
respectively. 
This distinction in normalization is essential: the two scales generally require different assumptions and give rise to different embedding conditions. 
It is also reflected in the transformation $f\mapsto fw^{1/p}$, under which the Samko-type norm reduces to an unweighted generalized Morrey norm. 
In general, no analogous reduction is available for the Komori--Shirai-type norm; see Remark~\ref{rem:Mfvw-transform}.

The main purpose of this paper is to characterize continuous embeddings between generalized Morrey-type spaces. 
More precisely, we examine the interaction among the integrability exponents $p$, the Morrey functions $\varphi$, and the weights $w$ in embeddings from generalized weighted Morrey spaces into generalized unweighted Morrey spaces.

We first study embeddings of generalized Morrey spaces on bounded domains. 
Let $\Omega\subset\mathbb R^d$ be a bounded domain, and consider embeddings of the form
\[
M_{\varphi_1}^{p_1}(\Omega)
 \hookrightarrow
M_{\varphi_2}^{p_2}(\Omega).
\]
We treat both non-radial Morrey functions $\varphi_i=\varphi_i(x,r)$ and radial Morrey functions $\varphi_i=\varphi_i(r)$, $i=1,2$. 
In the non-radial setting, we establish an embedding result under local radial comparability assumptions on the Morrey function $\varphi_1$. 
As a consequence, in the radial case, under the natural assumptions $\varphi_i\in\mathcal G_{p_i}$, $i=1,2$, and $\varphi_1(r)\to0$ as $r\to0^+$, we obtain the characterization
\[
M^{p_1}_{\varphi_1}(\Omega)
\hookrightarrow
M^{p_2}_{\varphi_2}(\Omega)
\qquad\text{if and only if}\qquad
p_2\leq p_1
\quad\text{and}\quad
\varphi_2\lesssim\varphi_1.
\]
In particular, the condition $p_2\leq p_1$ is necessary throughout the full range $0<p_1,p_2<\infty$. We prove this necessity by a direct block-packing construction that does not rely on duality; see Theorem~\ref{thm:nece-Omega} and Proposition~\ref{pro:rad-Omega}. 
This extends the corresponding result of \cite{HMS26} from $1<p_1,p_2<\infty$ to $0<p_1,p_2<\infty$ and resolves the question raised in \cite[Remark~5.14]{HMS26}.

We also compare two natural definitions of generalized Morrey spaces on bounded domains. 
In the first definition, the cubes are required to be contained in $\Omega$, whereas in the second, they are allowed to intersect the boundary of $\Omega$. 
For bounded Lipschitz domains, we give sufficient conditions under which these two spaces coincide, thereby recovering the corresponding result for classical Morrey spaces proved in \cite{HSS18}; see Proposition~\ref{prop:nor-cap}.
We further characterize these spaces as restrictions of generalized Morrey spaces on $\mathbb R^d$; see Proposition~\ref{prop:whole-space-restriction}. 
This provides a link between the domain and whole-space theories and may serve as a starting point for extension and localization arguments, as well as for the study of boundary value problems.

To transfer the bounded-domain results to the derivation of necessary embedding conditions on $\mathbb R^d$, we use cubes on which the weight is bounded above and below by positive constants. 
This leads us to revisit Proposition~2.6 and Corollary~2.7 in \cite{HS17}, which concern the existence of such regularity cubes. 
We construct an $\mathcal A_1$-weight whose singular set is dense in $\mathbb R^d$, showing that the closure of the singular set need not have measure zero, and thus that no regularity cube exists for such a weight.
We then prove that every cube contains a regularity cube, provided that the singular set of $w\in\mathcal A_\infty$ is closed and both $w$ and $w^{-1}$ are locally essentially bounded outside this set; see Proposition~\ref{pro:regularity-cubes}. 
On a regularity cube, the weight is comparable to a positive constant. 
Consequently, the bounded-domain results yield the necessity of $p_2\leq p_1$ in several of the weighted embedding theorems below. Regularity cubes also provide the local setting for our non-compactness argument.

We then turn to Komori--Shirai-type generalized weighted Morrey spaces on $\mathbb R^d$. The principal embeddings considered in this part are
\[
 \mathcal M_{\varphi_1}^{p_1}(\mathbb R^d;w)
 \hookrightarrow
 M_{\varphi_2}^{p_2}(\mathbb R^d),
\]
where the source space is weighted and the target space is unweighted. 
Particular attention is given to the piecewise power weights
\[
 w_{\alpha,\beta}(x)
 =
 \begin{cases}
  |x|^\alpha, & |x|<1,\\
  |x|^\beta, & |x|\geq 1,
 \end{cases}
 \qquad \alpha,\beta>-d.
\]
These weights provide a useful model in which the behavior near the origin and at infinity can be treated separately. 
We obtain sufficient embedding conditions expressed in terms of dyadic summability and derive corresponding necessary conditions; see Theorems~\ref{thm:Mfwwab-suff} and \ref{thm:Mfwwab-nece}. 
We also consider general Muckenhoupt weights; see Theorem~\ref{thm:wMfa-Mfb}. 
Corresponding results for the Komori--Shirai-type weighted Morrey spaces $\mathcal M_u^p(\mathbb R^d;w)$ were obtained by Haroske and Skrzypczak \cite{HS17}.

The Samko-type case is treated separately, with the Morrey functions assumed to be radial. 
We investigate embeddings of the form
\[
 \mathfrak M_{\varphi_1}^{p_1}(\mathbb R^d;w)
 \hookrightarrow
 M_{\varphi_2}^{p_2}(\mathbb R^d).
\]
Since the normalization involves $|Q|$ rather than $w(Q)$, the resulting conditions differ from those in the Komori--Shirai-type setting. 
For the piecewise power weights $w_{\alpha,\beta}$, we obtain explicit conditions involving the parameters $\alpha$ and $\beta$ and the growth of the Morrey functions; see Theorems~\ref{thm:Mf1wab-nece} and \ref{thm:Mf1wab-suff}. 
By taking $\varphi(t)=t^{d/u}$, where $0<p\leq u<\infty$, we further obtain embedding results for the classical Samko-type weighted Morrey spaces $\mathfrak M_u^p(\mathbb R^d;w)$.
Finally, if the weight is comparable to a positive constant on some cube, we prove non-compactness by using oscillatory functions supported in that cube; see Proposition~\ref{prop:compact}.

The paper is organized as follows.
Section~\ref{sec:pre} introduces the notation and definitions and collects preliminary results on generalized Morrey spaces, Muckenhoupt weights, the two classes of generalized weighted Morrey spaces, and estimates for characteristic functions.
Section~\ref{sec:Mf-Omega} is devoted to embeddings of generalized Morrey spaces on bounded domains. It treats both the radial and non-radial settings and compares the two definitions of generalized Morrey spaces on bounded domains.
Section~\ref{sec:KS} concerns embeddings of Komori--Shirai-type generalized weighted Morrey spaces, with particular emphasis on piecewise power weights and general Muckenhoupt weights. 
Section~\ref{sec:S} treats embeddings of Samko-type weighted Morrey spaces in both the generalized and classical settings, including piecewise power weights and general Muckenhoupt weights, and concludes with the non-compactness result.

\section{Preliminaries}\label{sec:pre}

We begin by fixing the notation used throughout the paper.
Let $\mathbb N=\{1,2,\ldots\}$ and $\mathbb N_0=\mathbb N\cup\{0\}$.
For $d\in\mathbb N$, let $\mathbb R^d$ denote the $d$-dimensional Euclidean space and $\mathbb Z^d$ the integer lattice.
For $a\in\mathbb R$, set $a_+:=\max\{a,0\}$.

For $x\in\mathbb R^d$ and $r>0$, let $Q(x,r)$ denote the open cube centered at $x$, with side length $r$ and sides parallel to the coordinate axes. 
Let $\mathcal Q$ be the collection of all such cubes.
For $Q\in\mathcal Q$, we denote its center, volume, and side length by $c(Q)$, $|Q|$, and $\ell(Q)=|Q|^{1/d}$, respectively.
For a function $\varphi:\mathbb R^d\times(0,\infty)\to(0,\infty)$, we use the abbreviation $\varphi(Q):=\varphi(c(Q),\ell(Q))$.

The symbol $C$ denotes a positive constant whose value may change from line to line. Subscripts indicate dependence on the corresponding parameters. 
We write $A\lesssim B$ if $A\le CB$ for a constant $C>0$ independent of the relevant variables, and $A\sim B$ if $A\lesssim B$ and $B\lesssim A$. 
Given two (quasi-)Banach spaces $X$ and $Y$, we write $X \hookrightarrow Y$ to indicate that $X \subset Y$ and that the natural embedding is continuous. 

\subsection{Generalized Morrey spaces}\label{ssec:Mf}

\begin{definition}\label{def:Mf}
    Let $0<p<\infty$ and $\varphi:\mathbb R^d\times(0,\infty)\to(0,\infty)$ be a measurable function. 
    The \emph{generalized Morrey space} $M_\varphi^p(\mathbb R^d)$ is defined to be the set of all $f\in L^p_{\mathrm{loc}}(\mathbb R^d)$ such that
    \[
    \|f \mid M_\varphi^p(\mathbb R^d)\|
    :=
    \sup_{x\in\mathbb R^d,\, r>0}
    \varphi(x,r)
    \Bigl(\frac{1}{|Q(x,r)|}\int_{Q(x,r)} |f(y)|^p\,dy \Bigr)^{1/p}
    <\infty.
    \]
    If $\varphi$ depends only on $r$, we write simply $\varphi(r)$.
\end{definition}

\begin{remark}\label{rem:Mf}
    Generalized Morrey spaces were introduced independently by Mizuhara \cite{Miz91} and Nakai \cite{N94}.
    We shall consider both the non-radial case $\varphi=\varphi(x,r)$ and the radial case $\varphi=\varphi(r)$.
    We refer the reader to \cite{AGNS17} and \cite[Chapter~12]{SDH20b} for more details about these spaces. 

    If $\varphi(x,r)= r^{d/u}$ or $\varphi(r)=r^{d/u}$, where $0<p\le u<\infty$, then $M_\varphi^p(\mathbb R^d)= M_u^p(\mathbb R^d)$ with equivalent quasi-norms. 
    If we consider the radial function $\varphi(r)=r^{-\sigma}\chi_{(0,1)}(r)$, where $-{d}/{p}\le \sigma < 0$, then $M_\varphi^p(\mathbb R^d)$ coincides with the local Morrey spaces $\mathcal L^p_\sigma$, cf. \cite{Tri13}.
    If $\varphi_0\equiv1$, then $M_p^{\varphi_0}(\mathbb R^d)=L^\infty(\mathbb R^d)$ by Lebesgue's differentiation theorem.

    In the non-radial setting, it is natural to postulate the following compatibility condition: 
    \begin{equation}\label{eq:def-C}
    \varphi(x,r)\sim \varphi(y,r)
    \end{equation}
    for all $x,y \in \mathbb R^d$, $r>0$ and $|x-y|\le C\,r$ with $C \ge 1$; see \cite[p.~22]{AGNS17} for example.
\end{remark}

For the space $M_\varphi^p(\mathbb R^d)$, it is usually required that $\varphi$ belongs to the class $\mathcal G_p$.

\begin{definition}\label{def:Gp}
Let $0<p<\infty$. The class $\mathcal G_p$ consists of all functions
$\varphi:\mathbb R^d\times(0,\infty)\to(0,\infty)$ 
such that $\varphi(x,\cdot)$ is almost increasing and 
$r\mapsto r^{-d/p}\varphi(x,r)$ is almost decreasing, uniformly in $x$.
That is, there
exist constants $c_1,c_2>0$, independent of $x$, $t$, and $s$,
such that
\[
\varphi(x,t)\le c_1\varphi(x,s) \qquad \text{and}
\qquad t^{-d/p}\varphi(x,t)\ge c_2s^{-d/p}\varphi(x,s)
\]
for all $x\in\mathbb R^d$ and $0<t\le s<\infty$. 

If $\varphi$ is independent of $x$, we simply write
$\varphi=\varphi(r)$. In this case the above conditions become
\[
\varphi(t)\lesssim\varphi(s),
\qquad\text{and}\qquad
t^{-d/p}\varphi(t)\gtrsim s^{-d/p}\varphi(s),
\]
for $0<t\le s<\infty$.
\end{definition}

\begin{remark}\label{rem:Gp}
    The class $\mathcal G_p$ is natural in this setting; see \cite[Theorem~2.5]{AGNS17} for the non-radial case and \cite[Lemma~2.2]{NNS16} (see also \cite{N00}) for the radial case. 
    More precisely, under the assumptions above, and assuming \eqref{eq:def-C} in the non-radial case, there exists a function $\psi\in\mathcal G_p$ such that $M_\varphi^p(\mathbb R^d)=M^p_\psi(\mathbb R^d)$.
    
    One can easily verify that $\mathcal{G}_{p_1}\subset \mathcal{G}_{p_2}$ for $0< p_2\le p_1< \infty$. 
    For further details and specific examples of the class $\mathcal G_p$, we refer the reader to \cite[Section~12.1.2]{SDH20b}.
    Notice that the class $\mathcal G_p$ is a special case of the more general class $\mathcal G(\delta_1,\delta_2;w)$ introduced in \cite{BYYZ26}.  

    Furthermore, in the radial case, if $0<p_i<\infty$, $\varphi_i\in\mathcal G_{p_i}$ for $i=1,2$, and $\inf_{t>0}\varphi_1(t)=0$, then
    \begin{equation}\label{eb:Mfa-Mfb}
    M^{p_1}_{\varphi_1}(\mathbb R^d)
    \hookrightarrow 
    M^{p_2}_{\varphi_2}(\mathbb R^d)
    \qquad\text{if and only if }\qquad
    p_2\le p_1\quad\text{and}\quad \varphi_2\lesssim \varphi_1.
    \end{equation}
    A non-radial counterpart will be established in Corollary~\ref{cor:Mfab-nr}.
\end{remark}

\subsection{Muckenhoupt weights}\label{ssec:Ap}

We briefly recall the basic notation and properties of the Muckenhoupt classes $\mathcal A_p$, introduced in \cite{Muc72,Muc74}. 
For comprehensive accounts of the theory of Muckenhoupt weights, we refer the reader to \cite{Gra24,Ste93,Tor86}.

Throughout the paper, a \emph{weight} refers to a locally integrable function $w \in L^1_{\mathrm{loc}}(\mathbb R^d)$ such that $w(x)>0$ for almost every $x \in \mathbb R^d$. 
For a bounded measurable set $E \subset \mathbb R^d$, we denote its weighted measure by
\[
w(E):=\int_E w(x)\,dx.
\]
We denote the classical Hardy--Littlewood maximal operator by $\mathbf M$, defined by
\[
\mathbf M f(x)
:=\sup_{B \ni x}
\frac{1}{|B|}\int_{B} |f(y)|\,dy,
\qquad x\in\mathbb R^d,
\]
where $B$ is an open ball given by
\[
B=B(a,R):=\{b\in \mathbb R^d:\, |b-a|<R\},\qquad R>0,
\]
and the supremum is taken over all open balls $B \subset \mathbb R^d$ containing $x$.

\begin{definition}\label{def:Ap}
Let $w$ be a weight on $\mathbb R^d$.

\begin{enumerate}[\bfseries\upshape (i)]
\item
Let $1<p<\infty$. We say that $w$ belongs to the
Muckenhoupt class $\mathcal A_p$ if
\[
[w]_{\mathcal A_p}
:=
\sup_{B\subset\mathbb R^d}
\Bigl(
\frac{1}{|B|}
\int_B w(x)\,dx
\Bigr)
\Bigl(
\frac{1}{|B|}
\int_B w(x)^{-1/(p-1)}\,dx
\Bigr)^{p-1}
<\infty,
\]
where the supremum is taken over all balls $B\subset\mathbb R^d$.

\item
We say that $w$ belongs to the class $\mathcal A_1$ if there exists a constant
$0<A<\infty$ such that
\[
\mathbf Mw(x)\le A\,w(x)
\]
for almost all $x\in\mathbb R^d$.

\item
The class $\mathcal A_\infty$ is defined by
\[
\mathcal A_\infty:=\bigcup_{p\ge1}\mathcal A_p.
\]
\end{enumerate}
\end{definition}

\begin{remark}\label{rem:Aq}
We shall use the following characterization of $\mathcal A_p$: 
for $1\le p<\infty$, a weight $w$ belongs to $\mathcal A_p$ if and
only if there exists $C>0$ such that 
\[
    \frac{1}{|B|}\int_B f(y)\,dy
    \le
    \Bigl(\frac{C}{w(B)}
    \int_B f(x)^p w(x)\,dx \Bigr)^{1/p}
\]
for all non-negative measurable functions $f$ and all balls $B\subset\mathbb R^d$. 
In particular, by taking $f=\chi_E$, where $E\subset B$ is measurable, one obtains
\begin{equation}\label{eq:EB}
    \frac{|E|}{|B|}
    \le
    C \Bigl(\frac{w(E)}{w(B)}\Bigr)^{1/p},
    \qquad E\subset B,\quad w\in\mathcal A_p,\quad 1\le p<\infty.
\end{equation}

Note that the smallest constant $A$ in Definition~\ref{def:Ap}(ii) is exactly $[w]_{\mathcal A_1}$, and in this case
\begin{equation}\label{eq:pro-A1}
    \frac{1}{|Q|}\int_Q w(y)\, dy \le [w]_{\mathcal A_1} \operatorname*{ess\,inf}_{y\in Q} w(y),
\end{equation}
cf. \cite[(8.2.18)]{Gra24} or \cite[(2.4)]{Tor86}.

We shall also use the openness of the Muckenhoupt classes:  
if $w\in\mathcal A_p$ for some $p>1$, then there exists $q<p$ such that $w\in\mathcal A_q$. 
This is a consequence of the reverse H\"older property of $\mathcal A_p$-weights. 
Following \cite{HS08}, we define the number 
\[
q_w:=\inf\{q\ge 1: w\in\mathcal A_q\},\qquad 
w\in\mathcal A_\infty.
\]
Then $1\le q_w<\infty$. 
Moreover, by the openness property, if $w\in\mathcal A_{q_w}$, then $q_w=1$.

\end{remark}

\begin{example}\label{ex:wab}
\begin{enumerate}[\bfseries\upshape (i)]
\item Let $w_\alpha(x)=|x|^\alpha$. Then, for $1<p<\infty$, $w_\alpha\in\mathcal A_p$ if and only if $-d<\alpha<d(p-1)$, while $w_\alpha\in\mathcal A_1$ if and only if $-d<\alpha\le 0$.
A natural extension is the piecewise power weight
\begin{align}\label{eq:def-wab}
w_{\alpha,\beta}(x)=
\begin{cases}
|x|^\alpha, & |x|<1,\\
|x|^\beta, & |x|\ge 1,
\end{cases}
\end{align}
where $\alpha,\beta>-d$. Then, for $1<p<\infty$,
$w_{\alpha,\beta}\in\mathcal A_p$ if and only if $-d<\alpha,\beta<d(p-1)$, while 
$w_{\alpha,\beta}\in\mathcal A_1$ if and only if $-d<\alpha,\beta\le 0$.
In particular, $q_{w_{\alpha,\beta}}=1+\frac{\max\{\alpha,\beta,0\}}{d}$.

\item A logarithmic refinement of the above piecewise power weights,
considered in \cite{HS11b}, is given by
\[
w_{(\boldsymbol\alpha,\boldsymbol\beta)}(x)
:=
\begin{cases}
|x|^{\alpha_1}(1-\log |x|)^{\alpha_2},
& |x|\le 1,\\
|x|^{\beta_1}(1+\log |x|)^{\beta_2},
& |x|>1,
\end{cases}
\]
where $\boldsymbol\alpha=(\alpha_1,\alpha_2)$, $\alpha_1>-d$, $\alpha_2\in\mathbb R$,
and $\boldsymbol\beta=(\beta_1,\beta_2)$, $\beta_1>-d$, $\beta_2\in\mathbb R$.
Further examples can be found in \cite[Example~2.2]{HS17}.

\end{enumerate}
\end{example}

Let $\mathcal D := \{Q_{j,k}:=2^{-j}([0,1)^d+k): j\in\mathbb N_0,\ k\in\mathbb Z^d\}$
be the collection of dyadic cubes under consideration in $\mathbb R^d$. 
We recall the notion of the \emph{set of singularities} of $\mathcal A_\infty$-weights, introduced in \cite{HS11}. For brevity, we refer to it as the singular set.

\begin{definition}\label{def:sing}
Let $w\in\mathcal A_\infty$. The \emph{singular set} of $w$ is defined by
\[
\mathbf S_{\mathrm{sing}}(w):=\mathbf S_0(w)\cup \mathbf S_\infty(w),
\]
where
\begin{align*}
    \mathbf S_0(w)
    &:=
    \biggl\{
    x\in\mathbb R^d:
    \inf_{Q_{j,k}\ni x}\frac{w(Q_{j,k})}{|Q_{j,k}|}=0
    \biggr\},\\
    \mathbf S_\infty(w)
    &:=
    \biggl\{
    x\in\mathbb R^d:
    \sup_{Q_{j,k}\ni x}\frac{w(Q_{j,k})}{|Q_{j,k}|}=\infty
    \biggr\},
\end{align*}
and the infimum and supremum are taken over all dyadic cubes $Q_{j,k}$ containing $x$.
\end{definition}

\begin{remark}\label{rem:sing}
This notion is a special case of $\mathbf S_{\mathrm{sing}}(w_1,w_2)$ introduced in \cite{HS11},
corresponding to the choice $w_1=w$ and $w_2\equiv1$; see also \cite[Remark~2.4]{HS17}.

For the weight $w_{\alpha,\beta}$ defined in \eqref{eq:def-wab}, the singular set is given by
\[
\mathbf S_{\mathrm{sing}}(w_{\alpha,\beta})
=
\begin{cases}
\{0\}, & \alpha\neq0,\\
\emptyset, & \text{otherwise},
\end{cases}
\]
as detailed in \cite[Example~2.5]{HS17}.
\end{remark}

We shall also use the following terminology. 
A cube $R\subset\mathbb R^d$ is called a \emph{regularity cube} for $w$ if there exist constants $0<c_R\leq C_R<\infty$ such that
\[
c_R\leq w(x)\leq C_R
\qquad\text{for a.e. }x\in R.
\]
A \emph{regularity ball} is defined analogously.

It is known that, for $w\in\mathcal A_\infty$,
\[
\bigl|\mathbf S_{\mathrm{sing}}(w)\bigr|=0;
\]
see \cite[Proposition~4.5]{HS11}. 
This does not, however, imply that the closure of the singular set has measure zero. 
In fact, the conclusion $|\overline{\mathbf S_{\mathrm{sing}}(w)}|=0$ stated in \cite[Proposition~2.6]{HS17} does not hold without an additional assumption, as the following example shows.

\begin{example}\label{ex:no-regularity-cube}
There exists a weight $w^\ast\in\mathcal A_1$ such that
\[
\overline{\mathbf S_{\mathrm{sing}}(w^\ast)}
=\mathbb R^d.
\]
Moreover, $w^\ast$ is essentially unbounded on every cube and hence
has no regularity cube.

Indeed, let $\{q_n\}_{n\geq1}$ be a countable dense subset of
$\mathbb R^d$, fix $0<\gamma<d$, and define
\[
w^\ast(x)
:=
\sum_{n=1}^{\infty}2^{-n}|x-q_n|^{-\gamma}.
\]
First, we verify that $w^\ast$ is locally integrable. Let
$K\subset\mathbb R^d$ be compact. For all $a\in\mathbb R^d$, we
have
\[
\begin{aligned}
\int_K |x-a|^{-\gamma}\,dx
&=
\int_{K\cap B(a,1)}|x-a|^{-\gamma}\,dx
+
\int_{K\setminus B(a,1)}|x-a|^{-\gamma}\,dx
\\
&\leq
\int_{B(a,1)}|x-a|^{-\gamma}\,dx+|K|
\\
&=
\int_{B(0,1)}|y|^{-\gamma}\,dy+|K|.
\end{aligned}
\]
Since $0<\gamma<d$, the last expression is finite and independent of $a$. Thus we set 
\[
C_K
:=
\sup_{a\in\mathbb R^d}
\int_K |x-a|^{-\gamma}\,dx
<\infty.
\]
By Tonelli's theorem,
\[
\begin{aligned}
\int_K w^\ast(x)\,dx
=
\sum_{n=1}^{\infty}
2^{-n}\int_K|x-q_n|^{-\gamma}\,dx
\leq
C_K\sum_{n=1}^{\infty}2^{-n}
=C_K<\infty.
\end{aligned}
\]
Thus $w^\ast\in L^1_{\mathrm{loc}}(\mathbb R^d)$.

Moreover, $|\cdot-a|^{-\gamma}\in\mathcal A_1$, 
with an $\mathcal A_1$ constant independent of $a$. 
Hence
\[
\begin{aligned}
\mathbf M w^\ast
\leq
\sum_{n=1}^{\infty}
2^{-n}\mathbf M\bigl(|\cdot-q_n|^{-\gamma}\bigr)
\leq
C_{d,\gamma}
\sum_{n=1}^{\infty}
2^{-n}|\cdot-q_n|^{-\gamma}
=
C_{d,\gamma}w^\ast
\qquad\text{a.e.},
\end{aligned}
\]
and therefore $w^\ast\in\mathcal A_1$.

We next show that every $q_N$ belongs to
$\mathbf S_{\mathrm{sing}}(w^\ast)$. Fix $N\in\mathbb N$, and for
each $j\in\mathbb N_0$ let $Q_j(q_N)\in\mathcal D$ be a dyadic cube
of side length $2^{-j}$ containing $q_N$. 
Since $w^\ast(x)\geq2^{-N}|x-q_N|^{-\gamma}$ and $|x-q_N|\leq\sqrt d\,2^{-j}$, $x\in Q_j(q_N)$, 
we obtain
\[
w^\ast(x)
\geq
d^{-\gamma/2}2^{-N}2^{j\gamma}
\qquad\text{for a.e. }x\in Q_j(q_N).
\]
It follows that
\[
\frac{w^\ast(Q_j(q_N))}{|Q_j(q_N)|}
\geq
d^{-\gamma/2}2^{-N}2^{j\gamma}
\longrightarrow\infty
\qquad\text{as }j\to\infty.
\]
Hence $q_N\in\mathbf S_{\infty}(w^\ast)\subset\mathbf S_{\mathrm{sing}}(w^\ast)$. 
By the density of $\{q_N:N\in\mathbb N\}$,
\[
\overline{\mathbf S_{\mathrm{sing}}(w^\ast)}
\supset
\overline{\{q_N:N\in\mathbb N\}}
=\mathbb R^d.
\]
Consequently,
\[
\overline{\mathbf S_{\mathrm{sing}}(w^\ast)}
=\mathbb R^d.
\]
Thus, even for a weight in
$\mathcal A_1\subset\mathcal A_\infty$, the closure of the singular
set need not have measure zero.

Finally, we show that $w^\ast$ is essentially unbounded on every
cube. Let $Q\subset\mathbb R^d$ be a cube with nonempty interior.
By the density of $\{q_n\}_{n\geq1}$, there exists $q_N\in Q^\circ$.
We may therefore choose $\rho>0$ such that $B(q_N,\rho)\subset Q$. 
Given $L>0$, define
\[
r_L
:=
\min\Bigl\{
\rho,
\Bigl(\frac{2^{-N}}{L}\Bigr)^{1/\gamma}
\Bigr\}.
\]
The ball $B(q_N,r_L)$ has positive measure, and for a.e. 
$x\in B(q_N,r_L)$, $|x-q_N|<r_L
\leq
({2^{-N}}/{L})^{1/\gamma}$. 
Thus 
\[
w^\ast(x)
\geq
2^{-N}|x-q_N|^{-\gamma}
>L.
\]
Note that $L>0$ is arbitrary, we have 
\[
\operatorname*{ess\,sup}_{x\in Q}w^\ast(x)=\infty,
\]
and hence no cube can be a regularity cube for $w^\ast$. 
In particular, the regularity-cube conclusion stated in
\cite[Corollary~2.7]{HS17} also requires an additional assumption.
\end{example}

However, the closure of the singular set of $w\in\mathcal A_\infty$ has measure zero is valid if the singular set is closed.

\begin{proposition}\label{pro:closed-sing}
Let $w\in\mathcal A_\infty$. If $\mathbf S_{\mathrm{sing}}(w)$ is closed, then 
\[
\bigl|
\overline{\mathbf S_{\mathrm{sing}}(w)}
\bigr|=0.
\]
\end{proposition}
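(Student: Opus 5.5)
This is essentially immediate from the fact, recalled just before Example~\ref{ex:no-regularity-cube}, that $|\mathbf S_{\mathrm{sing}}(w)|=0$ for every $w\in\mathcal A_\infty$ \cite[Proposition~4.5]{HS11}. If $\mathbf S_{\mathrm{sing}}(w)$ is closed, then $\overline{\mathbf S_{\mathrm{sing}}(w)}=\mathbf S_{\mathrm{sing}}(w)$, and the claim follows.

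So the plan has two steps.

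\textbf{Step 1 (closed sets).} Since $\mathbf S_{\mathrm{sing}}(w)$ is closed, it equals its closure.

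\textbf{Step 2 (null measure).} Invoke \cite[Proposition~4.5]{HS11} to get $|\mathbf S_{\mathrm{sing}}(w)|=0$. In particular the set is measurable, being closed, so its Lebesgue measure makes sense.

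For a self-contained account of Step~2, I would argue as follows. By the Lebesgue differentiation theorem along dyadic cubes, for a.e.\ $x$ we have
\[
\frac{w(Q_{j,k})}{|Q_{j,k}|}\to w(x)\in(0,\infty)\qquad\text{as } j\to\infty,\ Q_{j,k}\ni x .
\]
A convergent sequence is bounded, so the averages over the small cubes $j\ge J$ are bounded above and away from $0$. It then remains to control the finitely many cubes with $j<J$ that contain $x$. Each such cube has finite positive weighted measure, since $w\in L^1_{\mathrm{loc}}$ and $w>0$ a.e. Hence both the supremum and the infimum over all dyadic cubes containing $x$ lie in $(0,\infty)$.

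The argument needs only local integrability and a.e.\ positivity of $w$, not the full $\mathcal A_\infty$ hypothesis. Note that $\mathcal D$ consists of cubes with $j\ge0$ only, so no large-scale averages arise. There is no real obstacle here; the only point requiring care is the finite positivity of $w(Q)$ for the coarse cubes.
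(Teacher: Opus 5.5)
Your proof is correct and is the same as the paper's: you cite \cite[Proposition~4.5]{HS11} for $|\mathbf S_{\mathrm{sing}}(w)|=0$ and use that a closed set coincides with its closure. Your optional self-contained justification of the null-measure fact via dyadic Lebesgue differentiation is also sound, and it shows this part needs only $w\in L^1_{\mathrm{loc}}$ with $w>0$ a.e.; you rightly note that $\mathcal D$ contains only cubes with $j\ge0$, so only finitely many coarse cubes need separate treatment.
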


\begin{proof}
By \cite[Proposition~4.5]{HS11}, $|\mathbf S_{\mathrm{sing}}(w)|=0$. 
Since $\mathbf S_{\mathrm{sing}}(w)$ is closed, $\overline{\mathbf S_{\mathrm{sing}}(w)}
=
\mathbf S_{\mathrm{sing}}(w)$
and the conclusion follows.
\end{proof}

We finally give a sufficient condition for the existence of regularity
cubes and balls.

\begin{proposition}\label{pro:regularity-cubes}
Let $w\in\mathcal A_\infty$. Assume that
$\mathbf S_{\mathrm{sing}}(w)$ is closed and that
\begin{equation}\label{eq:LEB}
w,w^{-1}
\in
L^\infty_{\mathrm{loc}}
\bigl(
\mathbb R^d\setminus\mathbf S_{\mathrm{sing}}(w)
\bigr).
\end{equation}
Then every cube $Q\subset\mathbb R^d$ contains a regularity cube
$\widetilde Q$ for $w$ such that $\overline{\widetilde Q}\subset Q$. 
The result remains valid for balls.
\end{proposition}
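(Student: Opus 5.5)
The plan is to find inside the given cube $Q$ a point that avoids the singular set, and then shrink to a small closed neighbourhood of that point on which the local essential bounds \eqref{eq:LEB} apply.

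First, by Proposition~\ref{pro:closed-sing}, $|\mathbf S_{\mathrm{sing}}(w)|=0$, because the singular set is closed. Since $Q$ is open with $|Q|>0$, the set $Q\setminus\mathbf S_{\mathrm{sing}}(w)$ has positive measure. In particular it is nonempty, so we can pick a point $x_0\in Q\setminus\mathbf S_{\mathrm{sing}}(w)$.

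Next, the set $U:=Q\setminus\mathbf S_{\mathrm{sing}}(w)$ is open, being the intersection of the open cube $Q$ with the complement of a closed set. So there is $\rho>0$ with $B(x_0,\rho)\subset U$. Choose a cube $\widetilde Q=Q(x_0,r)$ with $r$ small enough that $\overline{\widetilde Q}\subset B(x_0,\rho)$; for instance $r=\rho/\sqrt d$ works, since $\overline{Q(x_0,r)}\subset \overline{B(x_0,\sqrt d\,r/2)}$. Then $\overline{\widetilde Q}$ is a compact subset of $\mathbb R^d\setminus\mathbf S_{\mathrm{sing}}(w)$, and $\overline{\widetilde Q}\subset Q$.

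Finally, we interpret \eqref{eq:LEB} in the standard sense: $w$ and $w^{-1}$ are essentially bounded on every compact subset of the open set $\mathbb R^d\setminus\mathbf S_{\mathrm{sing}}(w)$. Applying this to the compact set $\overline{\widetilde Q}$ gives constants $C_1,C_2<\infty$ with $w\le C_1$ and $w^{-1}\le C_2$ a.e. on $\widetilde Q$. Setting $c_R:=C_2^{-1}$ and $C_R:=C_1$ yields $c_R\le w\le C_R$ a.e. on $\widetilde Q$, so $\widetilde Q$ is a regularity cube. For balls the same argument applies with $\widetilde B=B(x_0,\rho/2)$, whose closure lies in $B(x_0,\rho)$; the only changes are geometric.

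There is no substantial obstacle here. The points that need care are that closedness of the singular set is what makes $Q\setminus\mathbf S_{\mathrm{sing}}(w)$ open, and that we must pass to a compact subset before invoking \eqref{eq:LEB}. The hypothesis $w\in\mathcal A_\infty$ enters only through $|\mathbf S_{\mathrm{sing}}(w)|=0$. Example~\ref{ex:no-regularity-cube} shows the closedness assumption cannot be dropped.
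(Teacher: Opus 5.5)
Your proposal is correct and follows essentially the same route as the paper. You use $|\mathbf S_{\mathrm{sing}}(w)|=0$ to pick a point of $Q$ outside the closed singular set, shrink to a cube whose closure is a compact subset of the open set $Q\setminus\mathbf S_{\mathrm{sing}}(w)$, and apply the local essential bounds on $w$ and $w^{-1}$ there. The only cosmetic difference is that the paper works with $Q^\circ$ so as to allow non-open cubes, while you take $Q$ open (as the paper's convention for $Q(x,r)$ permits); this is harmless.
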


\begin{proof}
Set $\Omega
:=
\mathbb R^d\setminus\mathbf S_{\mathrm{sing}}(w)$.
By $\mathbf S_{\mathrm{sing}}(w)$ is closed, $\Omega$ is open.

Let $Q\subset\mathbb R^d$ be a cube. Since $Q^\circ$ has positive
measure, it cannot be contained in
$\mathbf S_{\mathrm{sing}}(w)$. Hence, we may choose $x\in Q^\circ\cap\Omega$. 
Since $Q^\circ\cap\Omega$ is open, there exists a cube
$\widetilde Q$ containing $x$ such that 
\[
\overline{\widetilde Q}
\subset
Q^\circ\cap\Omega.
\]
In particular, $\overline{\widetilde Q}$ is a compact subset of
$\Omega$.

By \eqref{eq:LEB}, both $w$ and $w^{-1}$ are essentially bounded on
every compact subset of $\Omega$. Applying this to
$\overline{\widetilde Q}$ gives $w,w^{-1}\in L^\infty(\overline{\widetilde Q})$, 
and therefore $w,w^{-1}\in L^\infty(\widetilde Q)$. 
Consequently, 
\[
0<
\frac{1}{\|w^{-1}\|_{L^\infty(\widetilde Q)}}
\leq w(y)\leq
\|w\|_{L^\infty(\widetilde Q)}
<\infty
\qquad\text{for a.e. }y\in\widetilde Q.
\]
Thus $\widetilde Q$ is a regularity cube for $w$, and $\overline{\widetilde Q}\subset Q$. 

The proof for balls is identical: one may choose
$x\in B^\circ\cap\Omega$ and a smaller ball $\widetilde B$
such that $\overline{\widetilde B}
\subset
B^\circ\cap\Omega$.
\end{proof}

\begin{remark}\label{rem:regularity-cubes-applications}
The assumptions of Proposition~\ref{pro:regularity-cubes} are not
automatic for an arbitrary weight $w\in\mathcal A_\infty$.
Thus whenever the existence of regularity cubes is used
below to derive a necessary condition, these assumptions must either
be imposed explicitly or verified for the weight under
consideration.

However, for the weight $w_{\alpha,\beta}$ defined in \eqref{eq:def-wab}, where 
$\alpha,\beta>-d$, no additional assumptions are needed. 
Indeed, by \cite[Example~2.5]{HS17},
\[
\mathbf S_{\mathrm{sing}}(w_{\alpha,\beta})
=
\begin{cases}
\{0\}, & \alpha\neq0,\\
\emptyset, & \alpha=0.
\end{cases}
\]
In particular, $\mathbf S_{\mathrm{sing}}(w_{\alpha,\beta})$ is
closed.

We next verify \eqref{eq:LEB}. 
Let $K\subset\mathbb R^d\setminus\{0\}$ be compact. Then, by the Weierstrass extreme value theorem, there exist constants $0<c_K\leq C_K<\infty$ such that $c_K\leq w_{\alpha,\beta}(x)\leq C_K$ for all $x\in K$. 
Consequently, $w_{\alpha,\beta},
w_{\alpha,\beta}^{-1}\in L^\infty(K)$.

It remains only to consider the origin when $\alpha=0$, in which case the singular set is empty. 
For $0<|x|<1$, we have $w_{0,\beta}(x)=|x|^0=1$.
Thus both $w_{0,\beta}$ and $w_{0,\beta}^{-1}$ are essentially bounded
in a neighborhood of the origin. We have therefore verified \eqref{eq:LEB}. 
Hence $w_{\alpha,\beta}$ satisfies all the assumptions of
Proposition~\ref{pro:regularity-cubes}.
\end{remark}

\subsection{Weighted spaces}\label{ssec:M/Mfvw}

Let $0<p<\infty$ and $w\in\mathcal A_\infty$ be a Muckenhoupt weight. 
The weighted Lebesgue space $L^p(\mathbb R^d; w)$ is defined to be the set of all measurable functions $f$ such that 
\[
\|f\mid L^p(\mathbb R^d;w)\|:=\Bigl(\int_{\mathbb R^d}|f(x)|^p w(x)\,dx\Bigr)^{1/p}<\infty.
\]
The corresponding weighted local Lebesgue space $L^p_{\mathrm{loc}}(\mathbb R^d; w)$ contains all measurable functions $f$ such that $\int_E |f(x)|^p w(x)\,dx < \infty$ for all compact sets $E\subset\mathbb R^d$. 
Under the usual convention for $p=\infty$, the weight does not affect the norm, so that $L^\infty(\mathbb R^d;w)=L^\infty(\mathbb R^d)$.
We therefore restrict attention to $p<\infty$ in the weighted setting.

We recall the two well-studied classes of weighted Morrey spaces. 

\begin{definition}\label{def:Mvw}
    Let $0<p\le u<\infty$ and $w$ be a weight. 
    \begin{enumerate}[\bfseries\upshape (i)]
    \item The \emph{weighted Morrey space} $\mathcal M_u^p(\mathbb R^d; w)$ is defined to be the set of all $f\in L^p_{\mathrm{loc}}(\mathbb R^d; w)$ such that
    \[
    \|f\mid \mathcal M_u^p(\mathbb R^d; w)\|:=\sup_{x\in\mathbb R^d,\, r>0}w(Q(x,r))^{1/u-1/p}
    \Bigl(\int_{Q(x,r)}|f(y)|^p w(y)\,dy\Bigr)^{1/p}<\infty.
    \]

    \item The \emph{weighted Morrey space} $\mathfrak M_u^p(\mathbb R^d; w)$ is defined to be the set of all $f\in L^p_{\mathrm{loc}}(\mathbb R^d; w)$ such that
    \[
    \|f\mid \mathfrak M_u^p(\mathbb R^d; w)\|:=\sup_{x\in\mathbb R^d,\, r>0}|Q(x,r)|^{1/u-1/p}
    \Bigl(\int_{Q(x,r)}|f(y)|^p w(y)\,dy\Bigr)^{1/p}<\infty.
    \]
    \end{enumerate}
\end{definition}

\begin{remark}\label{rem:Mvw}
    The first space is the Komori--Shirai-type weighted Morrey space introduced in \cite{KS09}, whose normalization is determined by $w(Q)$; the second is the Samko-type weighted Morrey space introduced in \cite{Sam09}, whose normalization is determined by $|Q|$.

    When $u=p$, both spaces coincide with the weighted Lebesgue space $\mathcal M_p^p(\mathbb R^d;w)=\mathfrak M_p^p(\mathbb R^d;w)=L^p(\mathbb R^d;w)$. 
    When $w\equiv 1$, both recover the classical Morrey space $\mathcal M_u^p(\mathbb R^d;1)=\mathfrak M_u^p(\mathbb R^d;1)=M_u^p(\mathbb R^d)$. 
\end{remark}

We now recall the corresponding generalized weighted Morrey spaces. 

\begin{definition}\label{def:Mfvw}
    Let $0<p<\infty$ and $w$ be a weight. 
    \begin{enumerate}[\bfseries\upshape (i)]
    \item Let $\varphi=\varphi(x,r)$ be a positive measurable function on
$\mathbb R^d\times(0,\infty)$. The \emph{generalized weighted Morrey space} $\mathcal M_\varphi^p(\mathbb R^d; w)$ is defined to be the set of all $f\in L^p_{\mathrm{loc}}(\mathbb R^d; w)$ such that
    \[
    \|f\mid \mathcal M_\varphi^p(\mathbb R^d; w)\|:=\sup_{x\in\mathbb R^d,\, r>0}\frac{\varphi(x,r)}{w(Q(x,r))^{1/p}}
    \Bigl(\int_{Q(x,r)}|f(y)|^p w(y)\,dy\Bigr)^{1/p}<\infty.
    \]

    \item Let $\varphi=\varphi(r)$ be a positive measurable function on
$(0,\infty)$. The \emph{generalized weighted Morrey space} $\mathfrak M_\varphi^p(\mathbb R^d; w)$ is defined to be the set of all $f\in L^p_{\mathrm{loc}}(\mathbb R^d; w)$ such that
    \[
    \|f\mid \mathfrak M_\varphi^p(\mathbb R^d; w)\|:=\sup_{x\in\mathbb R^d,\, r>0}\frac{\varphi(r)}{|Q(x,r)|^{1/p}}
    \Bigl(\int_{Q(x,r)}|f(y)|^p w(y)\,dy\Bigr)^{1/p}<\infty.
    \]
\end{enumerate}
\end{definition}

\begin{remark}\label{rem:Mfvw}
The Komori--Shirai-type space $\mathcal{M}^{p}_{\varphi}(\mathbb{R}^{d};w)$ was introduced by Guliyev~\cite{Gul12}, while the Samko-type space $\mathfrak{M}^{p}_{\varphi}(\mathbb{R}^{d};w)$ was introduced by Nakamura~\cite{Nak16}.

When $w\equiv1$ and $\varphi$ is radial, both spaces reduce to the unweighted generalized Morrey space
\[
\mathcal M_\varphi^p(\mathbb R^d;1)=\mathfrak M_\varphi^p(\mathbb R^d;1)= M_\varphi^p(\mathbb R^d).
\]
Furthermore, let $0<p\le u<\infty$, then  
\[
\begin{cases}
    \mathcal M_\varphi^p(\mathbb R^d;w)=\mathcal M_u^p(\mathbb R^d;w),&\quad\text{if } \varphi(x,r)= w(Q(x,r))^{1/u};\\[2mm]
    \mathfrak M_\varphi^p(\mathbb R^d;w)=\mathfrak M_u^p(\mathbb R^d;w),&\quad\text{if }  \varphi(r)= r^{d/u}.
\end{cases}
\]
\end{remark}

Characteristic function estimates will be used repeatedly in the proof of our main results. We begin with the Komori–Shirai-type scale, and recall the weighted class $\mathcal G_p^w$.

\begin{definition}\label{def:Gpw}
Let $0<p<\infty$. A function
$\varphi:\mathbb R^d\times(0,\infty)\to(0,\infty)$ belongs to $\mathcal G_p^w$ if it is almost increasing in the
second variable, and, for all
$Q_0\in\mathcal Q$,
\[
\sup_{\substack{Q\in\mathcal Q,\ Q\cap Q_0\neq\emptyset\\
\ell(Q)\le \ell(Q_0)}}\varphi(Q)\lesssim \varphi(Q_0)
\]
and
\[
\sup_{\substack{Q\in\mathcal Q,\ Q\cap Q_0\neq\emptyset\\
\ell(Q)\ge \ell(Q_0)}}
\varphi(Q)w(Q)^{-1/p}
\lesssim
\varphi(Q_0)w(Q_0)^{-1/p}.
\]
\end{definition}

\begin{remark}\label{rem:Gpw}
This weighted class was introduced in \cite{DGH18}; see also \cite{GE21}.

If $w\in\mathcal A_\infty$ and $0<p<u<\infty$, then the function $\varphi(x,r)=w(Q(x,r))^{1/u}$
belongs to $\mathcal G_p^w$. 
Indeed, this follows from the doubling property of $w$ and the monotonicity of the weighted measure with respect to set inclusion.

When $w\equiv1$, $\mathcal G_p^1$ reduces to a cube-based version of
$\mathcal G_p$. In the radial case $\varphi=\varphi(r)$, it coincides with
the usual class $\mathcal G_p$.
\end{remark}

\begin{lemma}\label{lem:1Qww}
    Let $0<p<\infty$, $\varphi\in\mathcal G_p^w$ and $Q_0=Q(x_0,r_0) \in \mathcal Q$. Then
    \begin{equation}\label{eq:norm-1Qww}
        \|\chi_{Q_0} \mid \mathcal M_\varphi^p(\mathbb R^d;w)\|\sim \varphi(x_0,r_0).
    \end{equation}
\end{lemma}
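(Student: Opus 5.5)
The plan is to prove the two inequalities in \eqref{eq:norm-1Qww} separately, working directly from Definition~\ref{def:Mfvw}(i) and the two defining conditions of $\mathcal G_p^w$ in Definition~\ref{def:Gpw}. For any cube $Q=Q(x,r)$, the local quantity attached to $\chi_{Q_0}$ is
\[
\frac{\varphi(Q)}{w(Q)^{1/p}}\, w(Q\cap Q_0)^{1/p}.
\]

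\emph{Lower bound.} I take $Q=Q_0$ in the supremum. Then $w(Q_0\cap Q_0)=w(Q_0)$, so the quantity equals $\varphi(Q_0)=\varphi(x_0,r_0)$. Hence $\|\chi_{Q_0}\mid\mathcal M_\varphi^p(\mathbb R^d;w)\|\ge\varphi(x_0,r_0)$. Here one only needs $0<w(Q_0)<\infty$, which holds because $w$ is locally integrable and positive a.e.

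\emph{Upper bound.} Fix $Q$. If $Q\cap Q_0=\emptyset$, the quantity vanishes, so I assume $Q\cap Q_0\neq\emptyset$ and split into two cases according to the size of $\ell(Q)$ relative to $\ell(Q_0)$.
\begin{enumerate}[\bfseries\upshape (a)]
\item If $\ell(Q)\le\ell(Q_0)$, I use the trivial bound $w(Q\cap Q_0)\le w(Q)$. The quantity is then at most $\varphi(Q)$, and the first condition in Definition~\ref{def:Gpw} gives $\varphi(Q)\lesssim\varphi(Q_0)$.
\item If $\ell(Q)\ge\ell(Q_0)$, I use instead $w(Q\cap Q_0)\le w(Q_0)$. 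The quantity is then at most $\varphi(Q)w(Q)^{-1/p}w(Q_0)^{1/p}$, and the second condition in Definition~\ref{def:Gpw} gives $\varphi(Q)w(Q)^{-1/p}\lesssim\varphi(Q_0)w(Q_0)^{-1/p}$. The factors $w(Q_0)^{\pm1/p}$ cancel, leaving $\lesssim\varphi(Q_0)$.
\end{enumerate}
Taking the supremum over all cubes $Q$ yields $\|\chi_{Q_0}\mid\mathcal M_\varphi^p(\mathbb R^d;w)\|\lesssim\varphi(x_0,r_0)$. The implicit constants are those of $\mathcal G_p^w$, so they are independent of $Q_0$.

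There is essentially no obstacle. The only point needing care is bookkeeping: the $\mathcal G_p^w$ conditions are stated for cubes intersecting $Q_0$, which is exactly the case remaining after discarding disjoint cubes. The boundary case $\ell(Q)=\ell(Q_0)$ is covered by both conditions, so the two cases exhaust all possibilities. No doubling property or $\mathcal A_\infty$ assumption on $w$ is needed beyond $w(Q)\in(0,\infty)$.
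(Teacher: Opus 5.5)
Your proof is correct and follows essentially the same route as the paper: the lower bound from testing $Q=Q_0$, and the upper bound via the case split $\ell(Q)\le\ell(Q_0)$ versus $\ell(Q)\ge\ell(Q_0)$, using $w(Q\cap Q_0)\le w(Q)$ in the first case and $w(Q\cap Q_0)\le w(Q_0)$ in the second, together with the two $\mathcal G_p^w$ conditions. Your explicit disposal of the disjoint case $Q\cap Q_0=\emptyset$ is a small improvement in care over the paper's write-up, since the $\mathcal G_p^w$ conditions only apply to cubes meeting $Q_0$.
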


\begin{proof}
    The proof is similar to those of \cite[Lemma~3.2]{DGH18} and \cite[Lemma~31]{GE21}. We provide the details here for the reader's convenience.

    Let $Q=Q(x,r)\in\mathcal Q$. A direct computation gives
    \begin{align*}
        \|\chi_{Q_0} \mid \mathcal M_\varphi^p(\mathbb R^d;w)\|
        &=\sup_{Q\in\mathcal Q}\varphi(Q)w(Q)^{-1/p}\bigl(w(Q\cap Q_0)\bigr)^{1/p}\\
        &\geq \varphi(Q_0)w(Q_0)^{-1/p}w(Q_0)^{1/p}=\varphi(Q_0).
    \end{align*}
    Thus, it remains to prove the upper bound $\|\chi_{Q_0} \mid  \mathcal M_\varphi^p(\mathbb R^d;w)\|\lesssim \varphi(Q_0)$. If $\ell(Q)\leq \ell(Q_0)$, we know $\varphi(Q)\lesssim \varphi(Q_0)$, leading to 
    \[
    \varphi(Q)w(Q)^{-1/p}\bigl(w(Q\cap Q_0)\bigr)^{1/p} \le \varphi(Q) \lesssim \varphi(Q_0).
    \]
    If $\ell(Q)\geq \ell(Q_0)$, then the condition $\varphi(Q)w(Q)^{-1/p}\lesssim \varphi(Q_0)w(Q_0)^{-1/p}$ implies
    \[
    \varphi(Q)w(Q)^{-1/p}\bigl(w(Q\cap Q_0)\bigr)^{1/p} \lesssim \varphi(Q_0)w(Q_0)^{-1/p}w(Q_0)^{1/p} = \varphi(Q_0).
    \]
    Taking the supremum over all $Q\in\mathcal Q$ completes the proof.
\end{proof}

We next consider characteristic functions in the Samko–type scale. For this purpose, we recall the class $\mathcal{B}_{\varphi,p}$ introduced in \cite{Nak16}.

\begin{definition}\label{def:Bvp}
    Let $0<p<\infty$ and $\varphi\in\mathcal G_p$. 
    A weight $w$ belongs to the \emph{class $\mathcal{B}_{\varphi,p}$} if there exists a constant $C_{\varphi,p}> 0$ such that 
    \begin{equation}\label{eq:def-Bvp}
        \sup_{Q\in\mathcal Q,\, Q\subset Q_0}\Phi_{\varphi,p,w}(Q)\leq C_{\varphi,p}\,\Phi_{\varphi,p,w}(Q_0)
    \end{equation}
    for any $Q_0\in\mathcal Q$, where $\Phi_{\varphi,p,w}$ is given by
    \[
    \Phi_{\varphi,p,w}(Q):=\varphi(\ell(Q))\Bigl(\frac{w(Q)}{|Q|}\Bigr)^{1/p},\qquad Q\in\mathcal Q.
    \]
\end{definition}

\begin{remark}\label{rem:Bvp}
    For $\varphi(t) = t^{d/u}$, $0<p\leq u<\infty$, Definition~\ref{def:Bvp} reduces to the class $\mathcal{B}_{u,p}$. Specifically, a weight $w$ is in the \emph{class $\mathcal{B}_{u,p}$} if for any $Q_0\in\mathcal Q$,
    \begin{equation}\label{eq:def-Bup}
        \sup_{Q\in\mathcal Q,\, Q\subset Q_0}\Phi_{u,p,w}(Q)\leq C_{u,p}\, \Phi_{u,p,w}(Q_0)
    \end{equation}
    holds, where $\Phi_{u,p,w}(Q):=|Q|^{1/u-1/p}w(Q)^{1/p}$. Further information about the class $\mathcal{B}_{u,p}$ can be found in \cite[Section~15.2.1]{SDH20b}.
\end{remark}

\begin{example}\label{ex:Bp}
    Let $1\le p<\infty$, $\varphi\in \mathcal G_p$, and $\alpha>-d$. The power function $w(x)=|x|^\alpha$ belongs to $\mathcal B_{\varphi,p}$ if and only if  
    \[
    \varphi(R_0)R_0^{\alpha/p} \lesssim \varphi(R_1)R_1^{\alpha/p},\qquad \text{for } R_0\le R_1.
    \]
    In particular, for $\varphi(t)=t^{d/u}$ with $p\le u$, we obtain
    \[
    |x|^\alpha\in \mathcal B_{u,p} \quad\text{if and only if }\quad \alpha\ge -\frac{dp}{u}.
    \]
    We refer to \cite[Example~2.3]{Nak16} or \cite[p.~158]{SDH20b} for the proofs.
\end{example}

The formulation for $\|\chi_{Q} \mid \mathfrak M_\varphi^p(\mathbb R^d;w)\|$ has been established in \cite[Lemma~2.1]{Nak16}.

\begin{lemma}\label{lem:1Q1w}
    Let $0<p<\infty$ and $\varphi\in\mathcal G_p$. For all $Q\in\mathcal Q$, we have
    \[
    \varphi(\ell(Q))\Bigl(\frac{w(Q)}{|Q|}\Bigr)^{1/p} \leq \|\chi_{Q}\mid \mathfrak M_\varphi^p(\mathbb R^d;w)\| 
    \lesssim \sup_{E\in\mathcal Q,\, E\subset Q}\varphi(\ell(E))\Bigl(\frac{w(E)}{|E|}\Bigr)^{1/p}.
    \]
    If, additionally, \eqref{eq:def-Bvp} holds, then
    \begin{equation}\label{eq:norm-1Q1w}
        \|\chi_{Q}\mid \mathfrak M_\varphi^p(\mathbb R^d;w)\|\sim\varphi(\ell(Q))\Bigl(\frac{w(Q)}{|Q|}\Bigr)^{1/p}.
    \end{equation}
\end{lemma}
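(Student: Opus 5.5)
The lower bound follows by testing the supremum in the definition of $\|\chi_Q\mid\mathfrak M_\varphi^p(\mathbb R^d;w)\|$ at the cube $Q$ itself, i.e.\ at $x=c(Q)$ and $r=\ell(Q)$. This gives
\[
\frac{\varphi(\ell(Q))}{|Q|^{1/p}}\Bigl(\int_Q w\Bigr)^{1/p}=\varphi(\ell(Q))\Bigl(\frac{w(Q)}{|Q|}\Bigr)^{1/p}.
\]
For the upper bound, fix an arbitrary cube $P=Q(x,r)$ and estimate
\[
\frac{\varphi(\ell(P))}{|P|^{1/p}}\,w(P\cap Q)^{1/p}.
\]
We split according to whether $\ell(P)\le\ell(Q)$ or $\ell(P)>\ell(Q)$.

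\emph{Case $\ell(P)>\ell(Q)$.} Then $w(P\cap Q)\le w(Q)$ and $|P|\ge|Q|$. By the almost decreasing property of $t\mapsto t^{-d/p}\varphi(t)$ from $\varphi\in\mathcal G_p$, we get $\varphi(\ell(P))|P|^{-1/p}\lesssim\varphi(\ell(Q))|Q|^{-1/p}$. Hence the quantity is $\lesssim\varphi(\ell(Q))(w(Q)/|Q|)^{1/p}$, which is the term $E=Q$ in the supremum.

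\emph{Case $\ell(P)\le\ell(Q)$.} Here $P\cap Q$ is a rectangle, not necessarily a cube, so we cover it by a bounded number of cubes contained in $Q$. Each side of $P\cap Q$ has length at most $\ell(P)$. Choose a cube $E\subset Q$ of side $\ell(P)$ containing $P\cap Q$: along each coordinate the interval of $P\cap Q$ lies in the interval of $Q$ and has length $\le\ell(P)\le\ell(Q)$, so it can be enlarged to an interval of length exactly $\ell(P)$ inside $Q$'s interval. Openness and boundary issues are handled by taking closures, or by a slightly smaller side $\ell(P)(1-\varepsilon)$ and then letting $\varepsilon\to0$, using $\varphi\in\mathcal G_p$ to absorb the constant. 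Then $w(P\cap Q)\le w(E)$ and $|E|=|P|$, so
\[
\frac{\varphi(\ell(P))}{|P|^{1/p}}w(P\cap Q)^{1/p}\le\varphi(\ell(E))\Bigl(\frac{w(E)}{|E|}\Bigr)^{1/p}.
\]

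Taking the supremum over $P$ yields the upper bound, with implicit constant coming only from the $\mathcal G_p$ constant $c_2$. The only delicate point is the covering geometry in the second case; the one-dimensional interval-enlargement argument handles it.

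Finally, under \eqref{eq:def-Bvp} with $Q_0=Q$, the supremum over $E\subset Q$ is at most $C_{\varphi,p}\,\Phi_{\varphi,p,w}(Q)$. Combined with the lower bound, this gives \eqref{eq:norm-1Q1w}.
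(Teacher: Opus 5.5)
Your proof is correct and is the standard argument. The paper gives no proof of its own and simply cites \cite[Lemma~2.1]{Nak16}; your structure (test at $Q$ for the lower bound, then split by $\ell(P)>\ell(Q)$ using the almost-decreasing property of $t^{-d/p}\varphi(t)$, and $\ell(P)\le\ell(Q)$ by enclosing $P\cap Q$ in a subcube $E\subset Q$ with $\ell(E)=\ell(P)$) matches the case split the paper uses for Lemma~\ref{lem:1Qww}.

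Your aside on closures and $\varepsilon$-shrinking is unnecessary and should be dropped. The one-dimensional enlargement of open intervals already produces an open cube $E\in\mathcal Q$ with constant $1$. The shrunken cube would also fail to contain $P\cap Q$ when $P\subset Q$.
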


\begin{remark}\label{rem:M1w-chiQ}
    Set $\varphi(t)= t^{d/u}$ with $p\le u$ yields
    \begin{equation}\label{eq:norm-1Q1w'}
        \|\chi_Q\mid \mathfrak M_u^p(\mathbb R^d;w)\| \sim |Q|^{1/u-1/p}w(Q)^{1/p}
    \end{equation}
    if \eqref{eq:def-Bup} holds; see \cite[p.~158]{SDH20b} for a detailed proof.
\end{remark}

\begin{remark}\label{rem:Mfvw-transform}
It is well known that, for $0<p<\infty$, $\|f\mid L^p(\mathbb R^d;w)\| = \|fw^{1/p}\mid L^p(\mathbb R^d)\|$. 
The two generalized weighted Morrey scales behave differently under
the transformation $f\mapsto fw^{1/p}$. For the Samko--type scale,
\[
\|f\mid\mathfrak M_\varphi^p(\mathbb R^d;w)\|
=
\|fw^{1/p}\mid M_\varphi^p(\mathbb R^d)\|.
\]

By contrast,
\[
\|f\mid\mathcal M_\varphi^p(\mathbb R^d;w)\|
=
\sup_{Q\in\mathcal Q}
\varphi(Q)w(Q)^{-1/p}
\Bigl(\int_Q|fw^{1/p}|^p\Bigr)^{1/p},
\]
and the factor $w(Q)^{-1/p}$ cannot in general be replaced by
$|Q|^{-1/p}$. Thus multiplication by $w^{1/p}$ does not, in general,
identify the Komori--Shirai-type scale with its unweighted
counterpart.
Moreover, the two norms are comparable if
\[
0<\inf_{Q\in\mathcal Q}\frac{w(Q)}{|Q|}
\le
\sup_{Q\in\mathcal Q}\frac{w(Q)}{|Q|}
<\infty.
\]
By the Lebesgue differentiation theorem, this condition is essentially
equivalent to $w\sim1$ a.e.. It therefore fails for many
nontrivial weights.

In the classical case $\varphi(t)=t^{d/u}$, $0<p\le u<\infty$, one has
$\|f\mid\mathfrak M_u^p(\mathbb R^d;w)\|
=
\|fw^{1/p}\mid M_u^p(\mathbb R^d)\|$. 
On the other hand, the Komori--Shirai-type space
$\mathcal M_u^p(\mathbb R^d;w)$ corresponds to
$\varphi(Q)=w(Q)^{1/u}$ and cannot generally be obtained from the
unweighted Morrey space merely by multiplication by $w^{1/p}$; see
\cite[Remark~2.12]{HS17}.
\end{remark}

\section{Embeddings of generalized Morrey spaces on bounded domains}
\label{sec:Mf-Omega}

Let $\Omega\subset\mathbb R^d$ be a bounded domain. In this section,
we study generalized Morrey spaces on $\Omega$ defined through
averages over cubes $Q(x,r)\subset\Omega$, where the Morrey function
$\varphi(x,r)$ may depend on both the center $x$ and the scale $r$;
see Definition~\ref{def:Mf}. We first study embeddings of the form
\[
M_{\varphi_1}^{p_1}(\Omega)
\hookrightarrow
M_{\varphi_2}^{p_2}(\Omega).
\]
We treat both the non-radial case
$\varphi_i=\varphi_i(x,r)$ and the radial case
$\varphi_i=\varphi_i(r)$, $i=1,2$. In the final part of the section,
we restrict attention to radial Morrey functions. For bounded
Lipschitz domains, we compare the definition based on cubes contained
in $\Omega$ with the alternative definition allowing cubes that
intersect the boundary. We also characterize the resulting domain
spaces as restrictions of generalized Morrey spaces on
$\mathbb R^d$.

Let $0<p\leq u<\infty$, the Morrey space $M_u^p(\Omega)$
consists of all measurable functions $f$ on $\Omega$ such that
\[
\|f\mid M_u^p(\Omega)\|
:=
\sup_{Q\subset\Omega}
|Q|^{1/u-1/p}
\Bigl(\int_Q |f(y)|^p\,dy\Bigr)^{1/p}
<\infty.
\]
This space can be traced back to Piccinini
\cite{Pic69a,Pic69b}. Recall that, for $0<p_i\le u_i<\infty$, $i=1,2$,
\begin{equation}\label{eb:Ma-Mb-Omega}
M^{p_1}_{u_1}(\Omega)
\hookrightarrow
M^{p_2}_{u_2}(\Omega)
\qquad\text{if and only if}\qquad
p_2\le p_1
\quad\text{and}\quad
u_2\le u_1,
\end{equation}
cf.~\cite{Pic69a,Pic69b,HSS18}.

As in Section~\ref{ssec:Mf}, we generalize the power function
$r^{d/u}$ in the above Morrey quasi-norm and introduce the following spaces.

\begin{definition}\label{def:morrey-Omega}
Let $0<p<\infty$ and $ \varphi:\Omega\times(0,\operatorname{diam}(\Omega)]
    \to(0,\infty)$. 
    The \emph{generalized Morrey space $M_\varphi^p(\Omega)$} is defined
    to be the set of all measurable functions $f$ on $\Omega$ such that
    \[
    \|f\mid M_\varphi^p(\Omega)\|
    :=
    \sup_{Q(x,r)\subset\Omega}
    \varphi(x,r)
    \Bigl(
        \frac{1}{|Q(x,r)|}
        \int_{Q(x,r)}|f(y)|^p\,dy
    \Bigr)^{1/p}
    <\infty.
    \]
    If $\varphi$ depends only on $r$, we simply write $\varphi(r)$.
\end{definition}

Note that when $\varphi(x,r)= r^{d/u}$ or $\varphi(r)=r^{d/u}$, where $0<p\le u<\infty$, then
\[
M_\varphi^p(\Omega)= M_u^p(\Omega).
\]

\begin{remark}\label{rem:cube-ball-MO}

Unlike in the whole-space setting considered in Section~\ref{sec:pre}, replacing cubes by balls in Definition~\ref{def:morrey-Omega} need not yield an equivalent quasi-norm on a bounded domain. 
Indeed, the usual comparison relies on enclosing a ball or cube in a set of the other type having comparable size, but the enclosing set need not remain inside $\Omega$.

Equivalence does hold under suitable additional assumptions. 
In particular, in the radial case $\varphi=\varphi(r)$, condition \eqref{eq:Hardy-intrinsic} ensures that balls and cubes yield equivalent quasi-norms on bounded Lipschitz domains; see Corollary~\ref{cor:interior-balls-cubes}. 
\end{remark}

To prove the necessity part of our embedding result, we need some control over the dependence of $\varphi(x,r)$ on the center variable $x$. 
We therefore introduce the following local comparability conditions.
Throughout this section, $A\Subset B$ means that $\overline A$ is compact and $\overline A\subset B$.

\begin{definition}\label{def:L(R)C}
Let $U\subset\mathbb R^d$ be open and 
$\varphi:U\times(0,\operatorname{diam}(\Omega)]\to(0,\infty)$.

\begin{enumerate}[\bfseries\upshape (i)]
    \item We say that $\varphi$ is \emph{locally comparable} on $U$ if for all $A\ge1$ there exists a constant $C_A>0$ such that
    \begin{equation}\label{eq:def-LC}
        C_A^{-1}\varphi(y,r)
        \le \varphi(x,r)
        \le C_A\varphi(y,r)
    \end{equation}
    for all $x,y\in U$, $0<r\le \operatorname{diam}(\Omega)$, and $|x-y|\le Ar$.

    \item Let $0<p<\infty$. We say that $\varphi$ is
    \emph{locally radially comparable with respect to $p$} on a cube
    $Q_\ast=Q(x_\ast,R_\ast)\Subset U$ if there exist a function
    $\psi\in\mathcal G_p$ and a constant $C\ge 1$ such that
    \begin{equation}\label{eq:def-LRC}
        C^{-1}\psi(r) \le \varphi(x,r)\le C\,\psi(r),
    \end{equation}
    for all $x\in Q_\ast$ and $0<r\le R_\ast$.
\end{enumerate}
\end{definition}

\subsection{Embeddings in the non-radial setting}\label{ssec:NRc}

We start with analyzing the norm $\|\chi_R\mid M_\varphi^p(\Omega)\|$.

\begin{lemma}\label{lem:1Q-Omega}
Let $0<p<\infty$ and $\varphi\in\mathcal G_p$.
Assume that $\varphi$ satisfies \eqref{eq:def-LC} on $\Omega$.
Then, 
\[
\|\chi_R\mid M_\varphi^p(\Omega)\|
\sim
\varphi(c(R),\ell(R)),
\]
for all cubes $R\subset\Omega$.
\end{lemma}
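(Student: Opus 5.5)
The plan is to establish the two inequalities separately, writing $R=Q(x_0,r_0)\subset\Omega$.

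\emph{Lower bound.} Test the supremum in Definition~\ref{def:morrey-Omega} with the cube $Q=R$ itself, which is admissible because $R\subset\Omega$. The average of $\chi_R^p$ over $R$ equals $1$, so
\[
\|\chi_R\mid M_\varphi^p(\Omega)\|\ge\varphi(x_0,r_0).
\]

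\emph{Upper bound.} Fix an arbitrary cube $Q=Q(x,r)\subset\Omega$ with $Q\cap R\neq\emptyset$; cubes disjoint from $R$ contribute nothing. The quantity to control is
\[
\varphi(x,r)\Bigl(\frac{|Q\cap R|}{|Q|}\Bigr)^{1/p}.
\]
I split into two cases according to the size of $r$ relative to $r_0$.

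\emph{Case $r\le r_0$.} Since $Q$ and $R$ intersect, $|x-x_0|\le \sqrt d\,(r+r_0)/2\le \sqrt d\,r_0$. Bound the ratio $|Q\cap R|/|Q|$ by $1$. Almost monotonicity of $\varphi(x,\cdot)$ (uniform in $x$, from $\mathcal G_p$) gives $\varphi(x,r)\lesssim\varphi(x,r_0)$. Then \eqref{eq:def-LC} with $A=\sqrt d$ gives $\varphi(x,r_0)\le C_A\varphi(x_0,r_0)$.

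\emph{Case $r>r_0$.} Now $|Q\cap R|\le |R|=r_0^d$, and $|x-x_0|\le\sqrt d\, r$. By the almost-decreasing property of $t\mapsto t^{-d/p}\varphi(x,t)$,
\[
\varphi(x,r)\Bigl(\frac{r_0^d}{r^d}\Bigr)^{1/p}=r_0^{d/p}\,r^{-d/p}\varphi(x,r)\lesssim \varphi(x,r_0).
\]
Here $r_0\le r\le\operatorname{diam}(\Omega)$, so $\varphi(x,r_0)$ lies in the domain of definition. It remains to replace $\varphi(x,r_0)$ by $\varphi(x_0,r_0)$, which is the subtle point. The comparability \eqref{eq:def-LC} at scale $r_0$ requires $|x-x_0|\le A r_0$, but a priori we only know $|x-x_0|\lesssim r$, which may be much larger than $r_0$.

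To avoid this, I compare at scale $r$ first and descend along the center $x_0$. Use \eqref{eq:def-LC} at scale $r$ with $A=\sqrt d$, which is legitimate since both $x,x_0\in\Omega$, to get $\varphi(x,r)\le C_A\varphi(x_0,r)$. Then apply the almost-decreasing property at the fixed center $x_0$:
\[
r^{-d/p}\varphi(x_0,r)\lesssim r_0^{-d/p}\varphi(x_0,r_0).
\]
Multiplying by $r_0^{d/p}$ gives the bound $\lesssim\varphi(x_0,r_0)$.

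For consistency I will use the same scheme in the first case: compare centers at scale $r$, then use almost monotonicity at center $x_0$. That case only needs $|x-x_0|\le\sqrt d\,r_0$, which is fine either way. Taking the supremum over $Q$ then finishes the proof.

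The only real obstacle is the order of operations just described: the center comparison must be made at the larger scale, before changing scales.
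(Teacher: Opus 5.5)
Your proof is correct and matches the paper's. The lower bound tests on $R$ itself. For $\ell(Q)>\ell(R)$, the paper likewise compares centers at the larger scale $\ell(Q)$ via \eqref{eq:def-LC} and then descends at the fixed center $c(R)$ using the almost-decreasing property.

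One caution about the case $r\le r_0$: keep your first argument, which raises the scale at $x$ to $r_0$ and then compares centers at $r_0$, exactly as the paper does. The ``for consistency'' variant, which compares centers at scale $r$, would need $|x-x_0|\lesssim r$, and that fails when $r\ll r_0$.
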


\begin{proof}
It is easily seen that 
\[
\|\chi_R\mid M_\varphi^p(\Omega)\|
\ge \varphi(c(R),\ell(R)).
\]

For the converse estimate, let $Q\subset\Omega$ be a cube. If
$Q\cap R=\emptyset$, the corresponding expression vanishes.
Suppose therefore that $Q\cap R\ne\emptyset$.

If $\ell(Q)\le\ell(R)$, then
\[
\varphi(c(Q),\ell(Q))
\lesssim\varphi(c(Q),\ell(R))
\lesssim\varphi(c(R),\ell(R)),
\]
where the first inequality follows from $\varphi\in\mathcal G_p$ is almost-increasing 
and the second from \eqref{eq:def-LC}. Consequently,
\[
\varphi(c(Q),\ell(Q))
\Bigl(\frac{|Q\cap R|}{|Q|}\Bigr)^{1/p}
\lesssim\varphi(c(R),\ell(R)).
\]

If $\ell(Q)>\ell(R)$, then \eqref{eq:def-LC} and
$Q\cap R\ne\emptyset$ give $\varphi(c(Q),\ell(Q))\lesssim\varphi(c(R),\ell(Q))$.
Moreover, since $r^{-d/p}\varphi(c(R),r)$ is almost decreasing,
\[
\varphi(c(R),\ell(Q))
({\ell(R)}/{\ell(Q)})^{d/p}
\lesssim\varphi(c(R),\ell(R)).
\]
Using $|Q\cap R|\le |R|$, we obtain
\[
\varphi(c(Q),\ell(Q))
\Bigl(\frac{|Q\cap R|}{|Q|}\Bigr)^{1/p}
\lesssim\varphi(c(R),\ell(R)).
\]
Taking the supremum over $Q\subset\Omega$ completes the proof.
\end{proof}

\begin{remark}\label{rem:1Q-rad}
In the radial case $\varphi=\varphi(r)$, condition \eqref{eq:def-LC} holds automatically. Hence, if $0<p<\infty$ and $\varphi=\varphi(r)\in\mathcal G_p$, then
\[
\|\chi_R\mid M_\varphi^p(\Omega)\|\sim \varphi(\ell(R))
\]
for all cubes $R\subset\Omega$. 
\end{remark}

We now present the main results of this section.

\begin{theorem}\label{thm:suff-Omega}
Let $\Omega\subset\mathbb R^d$ be a bounded domain,
$0<p_i<\infty$, and $\varphi_i:
\Omega\times(0,\operatorname{diam}(\Omega)]
\to(0,\infty)$, $i=1,2$.
If
\[
p_2\le p_1\qquad\text{and}\qquad
\sup_{Q(x,r)\subset\Omega}
\frac{\varphi_2(x,r)}{\varphi_1(x,r)}
<\infty,
\]
then
\[
M^{p_1}_{\varphi_1}(\Omega)
\hookrightarrow
M^{p_2}_{\varphi_2}(\Omega).
\]
\end{theorem}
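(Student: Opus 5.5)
The proof should be a direct cube-by-cube estimate. No structural assumptions on $\varphi_1,\varphi_2$ are needed beyond the stated ratio bound. Set
\[
K:=\sup_{Q(x,r)\subset\Omega}\frac{\varphi_2(x,r)}{\varphi_1(x,r)}<\infty.
\]
Fix $f\in M^{p_1}_{\varphi_1}(\Omega)$ and an arbitrary cube $Q=Q(x,r)\subset\Omega$. The aim is to bound the $p_2$-average of $|f|$ over $Q$ by its $p_1$-average, and then convert $\varphi_2$ into $\varphi_1$ at the same pair $(x,r)$.

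The key step is Hölder's (Jensen's) inequality for the normalized measure $dy/|Q|$ on $Q$. Since $p_1/p_2\ge1$, it gives
\[
\Bigl(\frac{1}{|Q|}\int_Q|f(y)|^{p_2}\,dy\Bigr)^{1/p_2}
\le
\Bigl(\frac{1}{|Q|}\int_Q|f(y)|^{p_1}\,dy\Bigr)^{1/p_1}.
\]
This holds for all $0<p_2\le p_1<\infty$, including exponents below $1$, because it only uses convexity of $t\mapsto t^{p_1/p_2}$ on $[0,\infty)$. In particular, measurability of $f$ and finiteness of the right-hand side ensure that the left-hand side is finite.

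We multiply by $\varphi_2(x,r)\le K\varphi_1(x,r)$ and obtain
\[
\varphi_2(x,r)\Bigl(\frac{1}{|Q|}\int_Q|f|^{p_2}\Bigr)^{1/p_2}
\le K\,\varphi_1(x,r)\Bigl(\frac{1}{|Q|}\int_Q|f|^{p_1}\Bigr)^{1/p_1}
\le K\,\|f\mid M^{p_1}_{\varphi_1}(\Omega)\|.
\]
Taking the supremum over all cubes $Q(x,r)\subset\Omega$ gives $\|f\mid M^{p_2}_{\varphi_2}(\Omega)\|\le K\|f\mid M^{p_1}_{\varphi_1}(\Omega)\|$. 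This yields both the inclusion and the continuity, with embedding constant $K$.

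There is no real obstacle. One point to check is that the supremum defining both quasi-norms runs over the same admissible cubes $Q(x,r)\subset\Omega$, which have $r\le\operatorname{diam}(\Omega)$ and so lie in the domain of $\varphi_i$. This means the hypothesis can be applied pointwise in $(x,r)$ without any comparability or $\mathcal G_p$ assumption. The other point is that the Jensen step is valid in the quasi-Banach range $p_2<1$.
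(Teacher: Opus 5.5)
Your proposal is correct and follows the paper's approach. The paper's proof consists only of the remark that the embedding follows directly from H\"older's inequality. You carry this out cube by cube: Jensen's inequality for the normalized averages, valid for all $0<p_2\le p_1$, followed by the pointwise bound $\varphi_2\le K\varphi_1$, which gives embedding constant $K$.
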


\begin{proof}
This directly follows from H\"older's inequality. 
\end{proof}

\begin{theorem}\label{thm:nece-Omega}
Let $\Omega\subset\mathbb R^d$ be a bounded domain, 
$0<p_1,p_2<\infty$, and assume that
$\varphi_i\in\mathcal G_{p_i}$, $i=1,2$, both satisfying 
\eqref{eq:def-LC} on $\Omega$. 

Assume in addition that there exists a cube $Q_\ast=Q(x_\ast,R_\ast)\Subset\Omega$
such that $\varphi_1$ is locally radially comparable with respect to
$p_1$ on $Q_\ast$, with a function $\psi\in\mathcal G_1$ satisfying 
\begin{equation}\label{eq:LRC-norad}
\lim_{r\to0^+}\psi(r)=0.
\end{equation}

If the embedding 
\begin{equation}\label{eb:Mfa-Mfb-Omega}
    M^{p_1}_{\varphi_1}(\Omega)
\hookrightarrow
M^{p_2}_{\varphi_2}(\Omega)
\end{equation}
holds, then 
\[
p_2\le p_1
\qquad\text{and}\qquad
\sup_{
Q(x,r)\subset\Omega}
\frac{\varphi_2(x,r)}{\varphi_1(x,r)}
<\infty.
\]
\end{theorem}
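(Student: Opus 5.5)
The plan has two parts: the comparison of the Morrey functions, which follows from characteristic functions, and the exponent condition $p_2\le p_1$, which needs a block-packing construction inside $Q_\ast$.

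\textbf{Step 1: the comparison $\varphi_2\lesssim\varphi_1$.} Assume the embedding \eqref{eb:Mfa-Mfb-Omega}, and let $C_0$ be its norm. For any cube $R=Q(x,r)\subset\Omega$ we test with $\chi_R$. By Lemma~\ref{lem:1Q-Omega}, applied to $\varphi_1$ and $\varphi_2$ (both are in the appropriate $\mathcal G_{p_i}$ and satisfy \eqref{eq:def-LC}),
\[
\varphi_2(x,r)\sim\|\chi_R\mid M^{p_2}_{\varphi_2}(\Omega)\|\le C_0\|\chi_R\mid M^{p_1}_{\varphi_1}(\Omega)\|\sim\varphi_1(x,r).
\]
Taking the supremum over all cubes $R\subset\Omega$ gives the second conclusion.

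\textbf{Step 2: setup for $p_2\le p_1$.} Suppose instead that $p_1<p_2$. The aim is to build functions supported in $Q_\ast$ with bounded $M^{p_1}_{\varphi_1}$-norm but unbounded $M^{p_2}_{\varphi_2}$-norm. On $Q_\ast$, \eqref{eq:def-LRC} lets us replace $\varphi_1$ by $\psi(r)$. For $\varphi_2$ we only use the lower bound coming from the single test cube $Q_\ast$:
\[
\|f\mid M^{p_2}_{\varphi_2}(\Omega)\|\ge\varphi_2(x_\ast,R_\ast)\,|Q_\ast|^{-1/p_2}\|f\|_{L^{p_2}(Q_\ast)}.
\]
Here $\varphi_2(x_\ast,R_\ast)$ is a fixed positive constant. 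It therefore suffices to construct $f_k$ supported in $Q_\ast$ with
\[
\|f_k\mid M^{p_1}_{\varphi_1}(\Omega)\|\lesssim1\qquad\text{and}\qquad \|f_k\|_{L^{p_2}(Q_\ast)}\to\infty.
\]

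\textbf{Step 3: the construction.} Fix a small scale $r$, and let $N\sim(R_\ast/r)^d$, chosen as a suitable fraction of that count. Pick $N$ small cubes $P_j$ of side $\varepsilon r$ (with $\varepsilon\ll1$), centred in the $N$ cells of a lattice of spacing $r$ inside $Q_\ast$, so that their centres are $r$-separated. Set
\[
f=\lambda\sum_j\chi_{P_j},\qquad \lambda\,\psi(\varepsilon r)=1.
\]

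\textbf{Step 3a: the $p_1$-Morrey norm.} Take a cube $Q\subset\Omega$ of side $t$.
\begin{itemize}
\item If $t\lesssim r$, then $Q$ meets $O(1)$ blocks, and Lemma~\ref{lem:1Q-Omega} combined with radial comparability gives a bound $\lesssim\lambda\psi(\varepsilon r)=1$.
\item If $t\gtrsim r$, then $Q$ meets about $(t/r)^d$ blocks, so $\frac1{|Q|}\int_Q|f|^{p_1}\sim\lambda^{p_1}\varepsilon^d$. The quantity is then $\varphi_1(x,t)\lambda\varepsilon^{d/p_1}$, and we need
\[
\varphi_1(x,t)\,\varepsilon^{d/p_1}\lesssim\psi(\varepsilon r).
\]
\end{itemize}

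\textbf{Step 3b: the $L^{p_2}$ norm.} On the other side,
\[
\|f\|^{p_2}_{L^{p_2}(Q_\ast)}\sim\lambda^{p_2}\varepsilon^d.
\]

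\textbf{Main obstacle: the parameter choice.} Using $\lambda^{p_1}\varepsilon^d$ as the budget, so that the density is dominated by the large-cube constraint, the natural choice is $\lambda\sim\varepsilon^{-d/p_1}$, made consistent with $\psi(\varepsilon r)=\lambda^{-1}$. The second condition $\psi\in\mathcal G_{p_1}$ says $t^{-d/p_1}\psi(t)$ is almost decreasing, and \eqref{eq:def-LRC} lets us apply it to $\varphi_1$ on $Q_\ast$. This gives
\[
\varphi_1(x,t)\lesssim\psi(\varepsilon r)\,\bigl(t/(\varepsilon r)\bigr)^{d/p_1}.
\]
Cubes with $t\gtrsim r$ are harmless for the boundedness requirement only when $t\le R_\ast$. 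For larger $t$ we instead use the almost increasing property together with the fact that $\varphi_1$ on $\Omega$ is bounded, and here \eqref{eq:def-LC} and the compactness of $\overline{Q_\ast}$ are needed. The $L^{p_2}$ side then behaves like $\lambda^{p_2-p_1}=\psi(\varepsilon r)^{-(p_2-p_1)}$. This tends to $\infty$ as $\varepsilon r\to0$, by \eqref{eq:LRC-norad}, and that contradicts the boundedness of the embedding.

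I expect the main difficulty to be the bookkeeping for intermediate cubes: those of size between $\varepsilon r$ and $r$, and those of size larger than $R_\ast$. The exponents must be balanced so that the $M^{p_1}_{\varphi_1}$ norm stays uniformly bounded while $\lambda\to\infty$. If the uniform-density packing fails, the first fallback is to take $f=\lambda\chi_P$ for a single tiny cube $P$ and compare $\psi$ at the two exponents.
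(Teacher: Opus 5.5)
Your proposal is correct and is essentially the paper's proof. Step~1 is identical, and your blocks of side $\varepsilon r$ at spacing $r$, with $\lambda=\psi(\varepsilon r)^{-1}\sim\varepsilon^{-d/p_1}$, are a reparametrization of the paper's blocks $Q(x_j,r)$ of height $\psi(r)^{-1}$ at spacing $s_r=c_0r\psi(r)^{-p_1/d}$, tested from below on $Q_\ast$ in the same way. The one muddled point is your displayed almost-decreasing bound, which would leave the factor $(t/r)^{d/p_1}$; for all $t\gtrsim r$, simply combine $\lambda\varepsilon^{d/p_1}\sim1$ with the boundedness of $\varphi_1$ on $\Omega\times(0,\operatorname{diam}(\Omega)]$ (almost increasing plus \eqref{eq:def-LC} with $A=1$), and use the almost-decreasing property of $\rho^{-d/p_1}\psi(\rho)$ only to keep the spacing below $R_\ast/4$ (your single-cube fallback would not detect $p_2>p_1$).
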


\begin{proof}
\emph{Step }1. Let $Q(x,r)\subset\Omega$. Applying the embedding \eqref{eb:Mfa-Mfb-Omega} to the characteristic function $\chi_{Q(x,r)}$ and using Lemma~\ref{lem:1Q-Omega}, we obtain
\[
\varphi_2(x,r)\sim \|\chi_{Q(x,r)}\mid M^{p_2}_{\varphi_2}(\Omega)\|
\lesssim \|\chi_{Q(x,r)}\mid M^{p_1}_{\varphi_1}(\Omega)\|\sim \varphi_1(x,r).
\]

\emph{Step }2. 
Suppose, to the contrary, that $p_2>p_1$. 
We shall construct a family
$(g_r)_{r>0}$ such that $\|g_r\mid M^{p_1}_{\varphi_1}(\Omega)\|$ is uniformly bounded for all sufficiently small $r>0$, but 
$\|g_r\mid M^{p_2}_{\varphi_2}(\Omega)\|\to\infty$ as $r\to 0^+$.

 \emph{Step }2.1. Choose $Q'=Q(x_\ast,R_\ast/2)\Subset Q_\ast$
and set
\[
s_r:=c_0r\psi(r)^{-p_1/d}.
\]
Since $\rho^{-d/p_1}\psi(\rho)$ is almost decreasing, $r\psi(r)^{-p_1/d}
\lesssim R_\ast\psi(R_\ast)^{-p_1/d}$ for $0<r\le R_\ast$.
Hence $c_0>0$ can be chosen so that $s_r\le R_\ast/4$.
Moreover,
\[
\frac{s_r}{r}=c_0\psi(r)^{-p_1/d}\longrightarrow\infty\qquad\text{as $r\to0^+$}.
\]

Let
\[
J_r:=\{j\in\mathbb Z^d:x_\ast+s_rj\in Q'\},
\qquad
x_j:=x_\ast+s_rj.
\]
Then the points $x_j$ are $s_r$-separated in the
$\ell^\infty$-metric and
\[
\#J_r\sim\Bigl(\frac{R_\ast}{s_r}\Bigr)^d
\sim R_\ast^dr^{-d}\psi(r)^{p_1}.
\]
For all sufficiently small $r$, we have $r\le s_r/4$, and the
cubes $Q(x_j,r)$ are pairwise disjoint and compactly contained in
$Q_\ast$.
 
 We now define the test function
 \[
 g_r(x):=\sum_{j\in J_r}\psi(r)^{-1}\chi_{Q(x_j,r)}(x).
 \]

\emph{Step }2.2. We prove that $g_r$ is uniformly bounded in
$M^{p_1}_{\varphi_1}(\Omega)$ for all sufficiently small $r>0$. Let $Q=Q(x,t)\subset\Omega$ be arbitrary and set
\[
I(Q):=
\varphi_1(x,t)
\Bigl(\frac1{|Q|}\int_Q |g_r(y)|^{p_1}\,dy\Bigr)^{1/p_1}.
\]
If $Q\cap\operatorname{supp}g_r=\emptyset$, then $I(Q)=0$. Hence we assume that
$Q\cap\operatorname{supp}g_r\neq\emptyset$ and  consider four cases.

Case (i): $0<t\le2r$.
Since $|g_r|\le\psi(r)^{-1}$, we have
\[
\Bigl(\frac1{|Q|}\int_Q |g_r|^{p_1}\Bigr)^{1/p_1}
\le \psi(r)^{-1}.
\]
If $Q(x,t)\cap Q(x_j,r)\ne\emptyset$, then $|x-x_j|_\infty\le \frac{t+r}{2}\le \frac{3r}{2}$.
Since $x_j\in Q'\Subset Q_\ast$ and $r\le R_\ast/16$, it follows
that $x\in Q_\ast$.
Hence \eqref{eq:def-LC} and \eqref{eq:def-LRC} are applicable. 
Now using the fact that $|x-x_j|\lesssim r$, $\varphi_1\in\mathcal G_{p_1}$, \eqref{eq:def-LC}, \eqref{eq:def-LRC} and $\psi\in\mathcal G_{p_1}$, we obtain
\[
\varphi_1(x,t)
\lesssim \varphi_1(x,2r)
\lesssim \varphi_1(x_j,2r)
\lesssim \psi(2r)
\lesssim \psi(r).
\]
Therefore, 
\[
I(Q)\lesssim \psi(r)\psi(r)^{-1}\lesssim1.
\]

Case (ii): $2r<t<s_r$. 
The cube $Q(x,t)$ intersects at most $C_d$ of the cubes $Q(x_j,r)$. 
Indeed, assume that 
$Q(x,t)\cap Q(x_j,r)\neq\emptyset$, 
then $x_j\in Q(x,t+2r)\subset Q(x,2t) \subset Q(x,2s_r)$. 
Since the centers $x_j$ are $s_r$-separated, only a bounded number of them can lie in a cube of side length $2s_r$. 
More precisely, by placing disjoint cubes $Q(x_j,s_r/2)$ around these centers, all these  cubes are contained in a cube of side length comparable to $s_r$. 
Therefore the number of such centers is bounded by a constant $C_d$, depending only on the
dimension. Consequently, 
\[
\int_Q |g_r(y)|^{p_1}\,dy
\lesssim r^d\psi(r)^{-p_1}.
\]
Moreover, if $Q$ intersects $Q(x_j,r)$, then $|x-x_j|\lesssim t$. Thus, by \eqref{eq:def-LC} and \eqref{eq:def-LRC}, 
\[
\varphi_1(x,t)\lesssim \varphi_1(x_j,t)\lesssim \psi(t),
\]
since $t<s_r\le R_\ast/4<R_\ast$. Therefore
\[
I(Q)
\lesssim
\psi(t)
\Bigl(\frac{r^d\psi(r)^{-p_1}}{t^d}\Bigr)^{1/p_1}
=
\frac{\psi(t)t^{-d/p_1}}{\psi(r)r^{-d/p_1}}.
\]
Since $t>r$ and $\rho^{-d/p_1}\psi(\rho)$ is almost decreasing, 
it follows that $I(Q)\lesssim1$.

Case (iii): $s_r\le t\le R_\ast$.
Similarly to Case (ii), we claim that the cube $Q(x,t)$ intersects at most $C(t/s_r)^d$ of the cubes $Q(x_j,r)$. 
Indeed, suppose that $Q(x,t)\cap Q(x_j,r)\neq\emptyset$, then $x_j\in Q(x,t+2r) \subset Q(x,3t)$. 
Let $N$ be the number of such centers $x_j$. Since the points $x_j$ are
$s_r$-separated, the cubes $Q(x_j,s_r/2)$ 
are pairwise disjoint, and are contained in
$Q(x,3t+s_r)$. Therefore, $N s_r^d
    \le c |Q(x,3t+s_r)|
    \le c(t+s_r)^d$.
Since $t\ge s_r$, we have $N\le c(1+{t}/{s_r})^d \le C({t}/{s_r})^d$.
Consequently, 
\[
\int_Q |g_r(y)|^{p_1}\,dy
\lesssim
\Bigl(\frac{t}{s_r}\Bigr)^d r^d\psi(r)^{-p_1}.
\]
Since $s_r^d\sim r^d\psi(r)^{-p_1}$, we obtain
\[
\Bigl(\frac1{|Q|}\int_Q |g_r|^{p_1}\Bigr)^{1/p_1}\lesssim1.
\]
Choose $j$ such that $Q(x,t)\cap Q(x_j,r)\ne\emptyset$. Then $|x-x_j|\lesssim t$, and hence
\[
\varphi_1(x,t)\lesssim \varphi_1(x_j,t)\lesssim\psi(t)\lesssim\psi(R_\ast).
\]
Thus $I(Q)\lesssim1$.

Case (iv): $t>R_\ast$.
Since $g_r$ is supported in $Q_\ast$, we have
\[
\Bigl(\frac1{|Q|}\int_Q |g_r|^{p_1}\Bigr)^{1/p_1}
\le t^{-d/p_1}\Bigl(\int_\Omega |g_r|^{p_1}\Bigr)^{1/p_1}=t^{-d/p_1}\bigl(\# J_r r^d \psi(r)^{-p_1}\bigr)^{1/p_1}\sim
t^{-d/p_1}R_\ast^{d/p_1}.
\]
Again, since $Q$ intersects the support of $g_r$, there exists $x_j\in Q_\ast$
such that $|x-x_j|\lesssim t$. By \eqref{eq:def-LC}, the almost decreasing property of
$\rho\mapsto \rho^{-d/p_1}\varphi_1(x_j,\rho)$, and
\eqref{eq:def-LRC}, we have
\[
\begin{aligned}
t^{-d/p_1}\varphi_1(x,t)
\lesssim t^{-d/p_1}\varphi_1(x_j,t)
\lesssim R_\ast^{-d/p_1}\varphi_1(x_j,R_\ast)
\lesssim R_\ast^{-d/p_1}\psi(R_\ast).
\end{aligned}
\]
It follows that
\[
I(Q)
\lesssim
R_\ast^{d/p_1}t^{-d/p_1}\varphi_1(x,t)
\lesssim
\psi(R_\ast).
\]
Hence we obtain $I(Q)\lesssim 1$.

Combining the four cases, we obtain
\[
\|g_r\mid M^{p_1}_{\varphi_1}(\Omega)\|\le C
\]
with $C$ independent of $r$.

\emph{Step }2.3. It remains to estimate the target norm. Testing on the fixed cube $Q_\ast$, we have
\[
\|g_r\mid M^{p_2}_{\varphi_2}(\Omega)\|
\ge
\varphi_2(x_\ast,R_\ast)
\Bigl(
\frac1{|Q_\ast|}\int_{Q_\ast}|g_r(y)|^{p_2}\,dy
\Bigr)^{1/p_2}.
\]
Note that the cubes $Q(x_j,r)$ are pairwise disjoint,
\[
\int_{Q_\ast}|g_r(y)|^{p_2}\,dy
=
\#J_r\,r^d\psi(r)^{-p_2}.
\]
Thus, by $\#J_r\sim R_\ast^d r^{-d}\psi(r)^{p_1}$,
we obtain
\[
\int_{Q_\ast}|g_r(y)|^{p_2}\,dy
\sim
R_\ast^d\psi(r)^{p_1-p_2}.
\]
Since $p_2>p_1$ and \eqref{eq:LRC-norad}, we have  
\[
\|g_r\mid M^{p_2}_{\varphi_2}(\Omega)\|\gtrsim \varphi_2(x_\ast,R_\ast)\psi(r)^{p_1/p_2-1}\to\infty.
\]
This contradicts the continuous embedding \eqref{eb:Mfa-Mfb-Omega} and completes
the proof.
\end{proof}

\begin{remark}\label{rem:vanishing-psi}
The condition \eqref{eq:LRC-norad} is needed only to obtain
the necessary of $p_2\leq p_1$, but it cannot in general be
omitted. Indeed, if $\varphi_1=\varphi_2\equiv1$, then
$M_1^p(\Omega)=L^\infty(\Omega)$ for $0<p<\infty$. 
Thus the embedding $M_1^{p_1}(\Omega)\hookrightarrow M_1^{p_2}(\Omega)$ holds 
for arbitrary $p_1,p_2>0$. 
Consequently, without the vanishing condition, the
embedding need not imply $p_2\leq p_1$. In the proof of
Theorem~\ref{thm:nece-Omega}, this condition enters precisely in the
block-packing argument used to exclude $p_2>p_1$.
\end{remark}

We summarize the above two main results at the end of this subsection.

\begin{corollary}\label{cor:main-Omega}
Let $\Omega\subset\mathbb R^d$ be a bounded domain, 
$0<p_1,p_2<\infty$, and assume that
$\varphi_i\in\mathcal G_{p_i}$, $i=1,2$, both satisfying 
\eqref{eq:def-LC} on $\Omega$. 
Assume in addition that there exists a cube $Q_\ast=Q(x_\ast,R_\ast)\Subset\Omega$
such that $\varphi_1$ is locally radially comparable with respect to
$p_1$ on $Q_\ast$, with a function $\psi\in\mathcal G_1$ satisfying $\lim_{r\to0^+}\psi(r)=0$. 
Then 
\[
M_{\varphi_1}^{p_1}(\Omega)
\hookrightarrow
M_{\varphi_2}^{p_2}(\Omega)
\]
if and only if
\[
p_2\leq p_1
\qquad\text{and}\qquad
\sup_{Q(x,r)\subset\Omega}
\frac{\varphi_2(x,r)}{\varphi_1(x,r)}
<\infty.
\]
\end{corollary}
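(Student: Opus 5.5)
This corollary is obtained by combining the two preceding theorems, so the plan is to check that their hypotheses are met in each direction.

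For sufficiency, assume $p_2\le p_1$ and $\sup_{Q(x,r)\subset\Omega}\varphi_2(x,r)/\varphi_1(x,r)<\infty$. Then Theorem~\ref{thm:suff-Omega} gives the embedding directly, since it requires no assumption on $\varphi_i$ beyond positivity. For completeness, one can recall the one-line reason. For any cube $Q(x,r)\subset\Omega$, H\"older's inequality with exponent $p_1/p_2\ge1$ gives
\[
\Bigl(\frac1{|Q|}\int_Q|f|^{p_2}\Bigr)^{1/p_2}\le\Bigl(\frac1{|Q|}\int_Q|f|^{p_1}\Bigr)^{1/p_1}.
\]
Multiplying by $\varphi_2(x,r)\le C\varphi_1(x,r)$ and taking the supremum yields $\|f\mid M^{p_2}_{\varphi_2}(\Omega)\|\le C\|f\mid M^{p_1}_{\varphi_1}(\Omega)\|$.

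For necessity, the assumptions of the corollary are exactly those of Theorem~\ref{thm:nece-Omega}:
\begin{enumerate}[\bfseries\upshape (i)]
\item $\varphi_i\in\mathcal G_{p_i}$, and both $\varphi_i$ satisfy \eqref{eq:def-LC} on $\Omega$;
\item there is a cube $Q_\ast\Subset\Omega$ on which $\varphi_1$ is locally radially comparable with respect to $p_1$, via some $\psi$;
\item this $\psi$ satisfies $\psi(r)\to0$ as $r\to0^+$.
\end{enumerate}
So the embedding \eqref{eb:Mfa-Mfb-Omega} implies both $p_2\le p_1$ and the boundedness of $\varphi_2/\varphi_1$ over cubes contained in $\Omega$.

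The only point needing care is that the corollary states $\psi\in\mathcal G_1$, while the block-packing proof uses that $\rho\mapsto\rho^{-d/p_1}\psi(\rho)$ is almost decreasing, that is, $\psi\in\mathcal G_{p_1}$. However, Definition~\ref{def:L(R)C}(ii) already requires $\psi\in\mathcal G_{p_1}$ for local radial comparability, so this property is available in any case. If one insisted on using only $\psi\in\mathcal G_1$, one could instead transfer the needed monotonicity from $\varphi_1\in\mathcal G_{p_1}$ through \eqref{eq:def-LRC}, since $\psi(r)\sim\varphi_1(x_\ast,r)$ for $r\le R_\ast$.

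With both directions established, the equivalence follows.
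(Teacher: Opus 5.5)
Your proposal is correct and is exactly the paper's argument: sufficiency is Theorem~\ref{thm:suff-Omega} (H\"older's inequality), and necessity is Theorem~\ref{thm:nece-Omega}, whose hypotheses coincide verbatim with those of the corollary. Your side remark is also accurate: the definition of local radial comparability with respect to $p_1$ already supplies $\psi\in\mathcal G_{p_1}$, which is the property the block-packing argument actually uses, so the extra label $\psi\in\mathcal G_1$ plays no role.
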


\begin{proof}
The sufficiency follows from Theorem~\ref{thm:suff-Omega}, while the
necessity follows from Theorem~\ref{thm:nece-Omega}.
\end{proof}

\subsection{The radial case and two definitions on bounded domains}\label{ssec:Rc}

We now consider the radial case $\varphi=\varphi(r)$ throughout this subsection.

\begin{proposition}\label{pro:rad-Omega}
Let $\Omega\subset\mathbb R^d$ be a bounded domain,
$0<p_i<\infty$, and
$\varphi_i\in\mathcal G_{p_i}$, $i=1,2$.
Assume that
\begin{equation}\label{eq:LRC-rad}
\lim_{r\to0^+}\varphi_1(r)=0.
\end{equation}
Then
\[
M^{p_1}_{\varphi_1}(\Omega)
\hookrightarrow
M^{p_2}_{\varphi_2}(\Omega)
\]
if and only if
\[
p_2\le p_1\qquad\text{and}\qquad
\sup_{0<r\le\operatorname{diam}(\Omega)}
\frac{\varphi_2(r)}{\varphi_1(r)}
<\infty.
\]
\end{proposition}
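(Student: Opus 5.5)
Sufficiency is immediate from Theorem~\ref{thm:suff-Omega}: in the radial case $\sup_{Q(x,r)\subset\Omega}\varphi_2(r)/\varphi_1(r)$ is at most $\sup_{0<r\le\operatorname{diam}(\Omega)}\varphi_2(r)/\varphi_1(r)$. For necessity I reduce to Theorem~\ref{thm:nece-Omega} by checking its hypotheses for the radial functions $\varphi_i(x,r):=\varphi_i(r)$, restricted to $\Omega\times(0,\operatorname{diam}(\Omega)]$.

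First, the radial functions satisfy \eqref{eq:def-LC} trivially, with $C_A=1$. Next, for the cube I choose any $Q_\ast=Q(x_\ast,R_\ast)\Subset\Omega$; one exists since $\Omega$ is open and nonempty. I take $\psi:=\varphi_1$. Then \eqref{eq:def-LRC} holds with $C=1$, and \eqref{eq:LRC-norad} is exactly \eqref{eq:LRC-rad}.

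The main obstacle is that Theorem~\ref{thm:nece-Omega} states $\psi\in\mathcal G_1$, whereas we only know $\varphi_1\in\mathcal G_{p_1}$. If $p_1\ge1$, the inclusion $\mathcal G_{p_1}\subset\mathcal G_1$ (Remark~\ref{rem:Gp}) settles this. In general, I would inspect the proof of Theorem~\ref{thm:nece-Omega}. There, only the almost increasing property of $\psi$ and the almost decreasing property of $\rho\mapsto\rho^{-d/p_1}\psi(\rho)$ are used; that is, the proof uses $\psi\in\mathcal G_{p_1}$, which is the class demanded in Definition~\ref{def:L(R)C}(ii). So the hypothesis ``$\psi\in\mathcal G_1$'' may be read as, or replaced by, $\psi\in\mathcal G_{p_1}$, and $\psi=\varphi_1$ qualifies. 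Alternatively, the block-packing argument of Step~2 can be rerun verbatim with $\varphi_1$ in place of $\psi$. The radial setting makes every comparability step an identity, so Cases (i)--(iv) go through using only $\varphi_1\in\mathcal G_{p_1}$.

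Theorem~\ref{thm:nece-Omega} then yields $p_2\le p_1$ and $\sup_{Q(x,r)\subset\Omega}\varphi_2(r)/\varphi_1(r)<\infty$. It remains to pass from cubes in $\Omega$ to all $0<r\le\operatorname{diam}(\Omega)$. For $r\le R_\ast$, the cube $Q(x_\ast,r)\subset\Omega$ gives $\varphi_2(r)\lesssim\varphi_1(r)$ directly. For $R_\ast<r\le\operatorname{diam}(\Omega)$, I use the $\mathcal G_p$ properties: $\varphi_2(r)\lesssim (r/R_\ast)^{d/p_2}\varphi_2(R_\ast)$ and $\varphi_1(r)\gtrsim\varphi_1(R_\ast)$. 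Since $r/R_\ast\le\operatorname{diam}(\Omega)/R_\ast$ is bounded by a constant depending only on $\Omega$, this gives $\varphi_2(r)\lesssim\varphi_1(r)$ on the whole range.
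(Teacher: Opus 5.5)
Your proposal is correct and follows the paper's proof, which likewise reduces to Corollary~\ref{cor:main-Omega} (that is, Theorems~\ref{thm:suff-Omega} and~\ref{thm:nece-Omega}): local comparability is automatic in the radial case, and one takes $\psi=\varphi_1$ on any cube $Q_\ast\Subset\Omega$. Your two extra checks are valid and fill in details the paper leaves implicit: first, the block-packing proof only uses $\psi\in\mathcal G_{p_1}$ rather than the stated $\psi\in\mathcal G_1$; second, the bound $\varphi_2\lesssim\varphi_1$ extends from admissible cube side lengths to all $0<r\le\operatorname{diam}(\Omega)$ via the $\mathcal G_{p_i}$ properties.
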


\begin{proof}
In the radial case, local comparability is automatic. 
Moreover, on all cubes
$Q_\ast\Subset\Omega$, the function $\varphi_1(x,r)=\varphi_1(r)$
is locally radially comparable with comparison function
$\psi=\varphi_1$. 
Therefore, all the hypotheses of Corollary~\ref{cor:main-Omega} are satisfied, and the conclusion follows.
\end{proof}

\begin{remark}\label{rem:compare-HMS}
In \cite[Corollary~5.13]{HMS26}, the authors study embeddings between
generalized Morrey spaces on bounded $C^\infty$ domains in the radial
setting. More precisely, for
$\varphi_i=\varphi_i(r)\in\mathcal G_{p_i}$, $i=1,2$, with
$\lim_{r\to0^+}\varphi_1(r)=0$, they show that
\[
p_2\le p_1
\qquad\text{and}\qquad
\varphi_2(r)\lesssim\varphi_1(r),\quad 0<r\le1,
\]
are sufficient for
\[
M^{p_1}_{\varphi_1}(\Omega)
\hookrightarrow
M^{p_2}_{\varphi_2}(\Omega).
\]
These conditions are also necessary when $1<p_1,p_2<\infty$.
As noted in \cite[Remark~5.14]{HMS26}, the authors conjecture that the same
characterization remains valid for $0<p_1,p_2\le1$, although their argument
does not apply in this range.

Proposition~\ref{pro:rad-Omega} proves the necessity of
$p_2\leq p_1$ for the full range $0<p_1,p_2<\infty$. In particular,
it settles the case $0<p_1,p_2\leq1$ left open in \cite{HMS26}. The
proof uses the geometric block-packing argument developed in
Theorem~\ref{thm:nece-Omega} and avoids duality and other methods
restricted to the Banach range.
Moreover, Corollary~\ref{cor:main-Omega} shows the embedding characterization also holds in the non-radial setting, allowing $\varphi(x,r)$ to depend on both the location and the scale of a cube $Q(x,r)$, provided that \eqref{eq:def-LC}--\eqref{eq:LRC-norad} hold.
\end{remark}

Following \cite{HSS18}, we compare two definitions of generalized Morrey spaces on a bounded domain. 
The space $M_\varphi^p(\Omega)$, introduced in Definition~\ref{def:morrey-Omega}(ii), is defined using cubes contained in $\Omega$. 
It therefore controls local averages over interior cubes, but does not directly account for averaging sets that meet the boundary $\partial\Omega$. 
This motivates the following alternative definition, in which the averaging sets are allowed to intersect $\partial\Omega$. 
We first determine conditions under which these two spaces coincide up to equivalence of quasi-norms, and then characterize their restrictions in terms of generalized Morrey spaces on $\mathbb R^d$.

\begin{definition}\label{def:Morrey-Omega-cap}
Let $\Omega\subset\mathbb R^d$ be a bounded domain, $0<p<\infty$, and
$\varphi:(0,\infty)\to(0,\infty)$ be measurable. The \emph{generalized
Morrey space} $\mathcal M_\varphi^p(\Omega)$ is defined to be the set of all
measurable functions $f$ on $\Omega$ such that
\[
\|f\mid\mathcal M_\varphi^p(\Omega)\|
:=
\sup_{\substack{x\in\Omega\\0<r\le \operatorname{diam}(\Omega)}}
\varphi(r)
\Bigl(
\frac{1}{|\Omega\cap Q(x,r)|}
\int_{\Omega\cap Q(x,r)}|f(y)|^p\,dy
\Bigr)^{1/p}
<\infty.
\]
\end{definition}

\begin{remark}\label{rem:cubes-balls-Omega}
Here we adopt the notation
$\mathcal M$ from \cite{HSS18} to compare with the classical cases; see Remark~\ref{rem:intrinsic-classical}. 
This notation is used only within the present subsection and should not be confused with
the notation for the Komori--Shirai-type weighted Morrey spaces considered
elsewhere in this paper.

If $\Omega$ is a bounded Lipschitz domain and
$\varphi\in\mathcal G_p$, then the quasi-norm in
Definition~\ref{def:Morrey-Omega-cap} may equivalently be defined using
balls $B(x,r)$ in place of cubes $Q(x,r)$. Indeed, the measure
density property of bounded Lipschitz domains gives
\[
|\Omega\cap B(x,r)|\sim r^d
\quad\text{and}\quad
|\Omega\cap Q(x,r)|\sim r^d
\]
uniformly for $x\in\Omega$ and $0<r\le\operatorname{diam}(\Omega)$;
see \cite{HKT08}. The equivalence follows from these estimates, the standard inclusions
between concentric cubes and balls, and the fact that
$\varphi(cr)\sim\varphi(r)$ for all fixed $c\geq1$, which follows
from $\varphi\in\mathcal G_p$.
\end{remark}

We first establish the immediate embedding.

\begin{proposition}\label{prop:cap-nor}
Let $\Omega\subset\mathbb R^d$ be a bounded domain, $0<p<\infty$ and $\varphi:(0,\infty)\to(0,\infty)$ be measurable.
Then
\[
\mathcal M_\varphi^p(\Omega)
\hookrightarrow
M_\varphi^p(\Omega).
\]
\end{proposition}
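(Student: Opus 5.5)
The plan is to compare the two quasi-norms cube by cube and show $\|f\mid M_\varphi^p(\Omega)\|\le\|f\mid\mathcal M_\varphi^p(\Omega)\|$ for every measurable $f$ on $\Omega$, with constant one. Then $\mathcal M_\varphi^p(\Omega)\subset M_\varphi^p(\Omega)$, and since the natural embedding is linear, this norm inequality gives its continuity.

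Fix a cube $Q(x,r)\subset\Omega$ appearing in the supremum of Definition~\ref{def:morrey-Omega}, with $\varphi$ radial so that $\varphi(x,r)=\varphi(r)$. First I check that $Q(x,r)$ is admissible in Definition~\ref{def:Morrey-Omega-cap}. Its center $x$ lies in $Q(x,r)\subset\Omega$. Its side length satisfies $r\le\operatorname{diam}(Q(x,r))\le\operatorname{diam}(\Omega)$, since the diameter of a cube of side $r$ is $\sqrt d\,r\ge r$. Moreover $\Omega\cap Q(x,r)=Q(x,r)$, so the averages coincide exactly:
\[
\varphi(r)\Bigl(\frac{1}{|Q(x,r)|}\int_{Q(x,r)}|f|^p\Bigr)^{1/p}
=\varphi(r)\Bigl(\frac{1}{|\Omega\cap Q(x,r)|}\int_{\Omega\cap Q(x,r)}|f|^p\Bigr)^{1/p}
\le\|f\mid\mathcal M_\varphi^p(\Omega)\|.
\]
Taking the supremum over all cubes $Q(x,r)\subset\Omega$ gives the claim.

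The only point needing care is that the admissible parameter ranges in the two definitions match. Every cube used for $M_\varphi^p(\Omega)$ has center in $\Omega$ and side length at most $\operatorname{diam}(\Omega)$, so it is among the cubes used for $\mathcal M_\varphi^p(\Omega)$. Because of this, no assumption on $\varphi$ (such as $\mathcal G_p$) and no regularity of $\Omega$ is required.
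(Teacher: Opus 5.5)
Your proof is correct and is the paper's argument: since $\Omega\cap Q(x,r)=Q(x,r)$ for every cube $Q(x,r)\subset\Omega$, each term in the $M_\varphi^p(\Omega)$ supremum also appears in the $\mathcal M_\varphi^p(\Omega)$ supremum, which gives the inequality with constant one. Your explicit check that the cube is an admissible parameter pair, namely $x\in\Omega$ and $r\le\sqrt d\,r\le\operatorname{diam}(\Omega)$, fills in a point the paper leaves implicit.
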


\begin{proof}
For all cubes $Q(x,r)\subset\Omega$, we have
$\Omega\cap Q(x,r)=Q(x,r)$. Hence
\[
\begin{aligned}
&\varphi(r)
\Bigl(
\frac{1}{|Q(x,r)|}
\int_{Q(x,r)}|f(y)|^p\,dy
\Bigr)^{1/p}
\\
&\qquad=
\varphi(r)
\Bigl(
\frac{1}{|\Omega\cap Q(x,r)|}
\int_{\Omega\cap Q(x,r)}|f(y)|^p\,dy
\Bigr)^{1/p}
\le
\|f\mid\mathcal M_\varphi^p(\Omega)\|.
\end{aligned}
\]
Taking the supremum over all cubes $Q(x,r)\subset\Omega$ proves the
assertion.
\end{proof}

To show the converse embedding, we need two auxiliary lemmas.

\begin{lemma}\label{lem:interior-ball-estimate}
Let $\Omega\subset\mathbb R^d$ be a bounded domain, $0<p<\infty$, and
$\varphi\in\mathcal G_p$. Let $\lambda>1$ be fixed. Then, for all 
balls $B(x,r)$ satisfying $B(x,\lambda r)\subset\Omega$ and all $f\in M_\varphi^p(\Omega)$, 
\[
\int_{B(x,r)}|f(y)|^p\,dy
\lesssim
r^d\varphi(r)^{-p}
\|f\mid M_\varphi^p(\Omega)\|^p.
\]
The implicit constant depends only on $d$, $p$, $\lambda$, and the
constants in the definition of $\mathcal G_p$.
\end{lemma}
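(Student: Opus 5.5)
The plan is to cover the ball $B(x,r)$ by a bounded number of cubes that are themselves contained in $\Omega$ and have side length comparable to $r$. Each such cube is then estimated directly by the definition of $\|f\mid M_\varphi^p(\Omega)\|$.

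The geometric step comes first. Set $\delta:=\min\{1,\lambda-1\}$, and consider the cubes $Q(z,s)$ with side length $s:=\delta r/\sqrt d$, whose centers $z$ run over the finite grid $(s\mathbb Z^d+x)\cap \overline{B(x,r+s)}$. These cubes cover $B(x,r)$. Their number is at most $N\le C(d)\,(r/s)^d=C(d,\lambda)$, since the grid points are $s$-separated and lie in a ball of radius $r+s\lesssim r$.

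We then check that each cube stays inside $\Omega$. A point $y\in Q(z,s)$ satisfies $|y-z|\le \sqrt d\,s/2=\delta r/2$. Since $|z-x|\le r+s$, we get $|y-x|\le r+s+\delta r/2$. With $s=\delta r/\sqrt d\le \delta r/2$ for $d\ge4$, this gives $|y-x|\le r+\delta r\le \lambda r$, so $Q(z,s)\subset B(x,\lambda r)\subset\Omega$. For small $d$ the factor should be adjusted: take $s:=\delta r/(2\sqrt d)$ and restrict the centers to $|z-x|< r+\delta r/2$. This keeps the same count bound and still ensures $Q(z,s)\subset B(x,\lambda r)$, and this bookkeeping is the only mildly delicate point.

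For each such cube the definition gives
\[
\int_{Q(z,s)}|f|^p\le s^d\varphi(s)^{-p}\|f\mid M_\varphi^p(\Omega)\|^p .
\]
Note that $s\le r\le\operatorname{diam}(\Omega)$, so $\varphi(s)$ is defined. Since $\varphi\in\mathcal G_p$ is almost increasing and $s\sim r$ with $s\le r$, the second condition of $\mathcal G_p$ gives $s^{-d/p}\varphi(s)\gtrsim r^{-d/p}\varphi(r)$. Hence $s^d\varphi(s)^{-p}\lesssim r^d\varphi(r)^{-p}$. Only the almost-decreasing property is needed here.

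Finally, summing over the at most $C(d,\lambda)$ cubes yields
\[
\int_{B(x,r)}|f|^p\lesssim r^d\varphi(r)^{-p}\|f\mid M_\varphi^p(\Omega)\|^p .
\]
The constants depend only on $d$, $p$, $\lambda$ and the $\mathcal G_p$ constants. No step is a real obstacle; the covering geometry just has to be done carefully.
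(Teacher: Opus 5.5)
Your proof is correct and follows the paper's argument. The paper likewise covers $B(x,r)$ by the grid cubes of side $cr$, $c=\frac12\min\{1,(\lambda-1)/\sqrt d\}$, that meet $B(x,r)$; it checks that they lie in $B(x,\lambda r)\subset\Omega$, bounds their number by comparing volumes, and applies the definition of $M_\varphi^p(\Omega)$ together with $\varphi\in\mathcal G_p$ on each cube.

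One correction to the covering step: your first choice $s=\delta r/\sqrt d$ with centers in $\overline{B(x,r+s)}$ does not cover $B(x,r)$ when $d\ge5$. The grid center nearest to a point of $B(x,r)$ can lie at distance close to $r+\sqrt d\,s/2>r+s$, so that point is missed. Use your adjusted choice $s=\delta r/(2\sqrt d)$ with $|z-x|<r+\delta r/2$ in every dimension, since it works uniformly, or simply take all grid cubes meeting $B(x,r)$ as the paper does.
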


\begin{proof}
Fix $\lambda>1$ and set
\[
c:=\frac12\min\Bigl\{1,\frac{\lambda-1}{\sqrt d}\Bigr\}.
\]
Consider the standard grid of cubes of side length $cr$, and let
$Q_1,\ldots,Q_N$ be those grid cubes that intersect $B(x,r)$.
These cubes have pairwise disjoint interiors, and $B(x,r)\subset\bigcup_{k=1}^N Q_k$.
For $z\in Q_k$, choose $v\in Q_k\cap B(x,r)$. Then $|z-x|
\le |z-v|+|v-x|
<\sqrt d\,cr+r
<\lambda r$.
Hence
\[
Q_k\subset B(x,\lambda r)\subset\Omega,\qquad k=1,\ldots,N.
\]
Using the disjointness of their interiors, we obtain $N(cr)^d
=
|\bigcup_{k=1}^N Q_k|
\le |B(x,\lambda r)|$, and hence  
$N\le C_{d,\lambda}$.

Since all the cubes have side length $cr$, we have $\ell(Q_k)\sim r$.
By the definition of $M_\varphi^p(\Omega)$,
\[
\varphi(\ell(Q_k))
\Bigl(
\frac{1}{|Q_k|}
\int_{Q_k}|f(y)|^p\,dy
\Bigr)^{1/p}
\le
\|f\mid M_\varphi^p(\Omega)\|.
\]
Therefore, by $|Q_k|\sim r^d$ and $\varphi\in\mathcal G_p$,  
\[
\begin{aligned}
\int_{Q_k}|f(y)|^p\,dy
\le
|Q_k|\,\varphi(\ell(Q_k))^{-p}
\|f\mid M_\varphi^p(\Omega)\|^p
\lesssim
r^d\varphi(r)^{-p}
\|f\mid M_\varphi^p(\Omega)\|^p.
\end{aligned}
\]
Summing over the uniformly bounded number of cubes yields
\[
\begin{aligned}
\int_{B(x,r)}|f(y)|^p\,dy
\le
\sum_{k=1}^N\int_{Q_k}|f(y)|^p\,dy\lesssim
r^d\varphi(r)^{-p}
\|f\mid M_\varphi^p(\Omega)\|^p.
\end{aligned}
\]
This completes the proof.
\end{proof}

Inspired by the proof of \cite[Theorem~2.7]{HSS18}, we obtain the following lemma.

\begin{lemma}\label{lem:boundary-covering-Omega}
Let $\Omega\subset\mathbb R^d$ be a bounded Lipschitz domain and set $R_0:=\operatorname{diam}(\Omega)$.
For $J\in\mathbb N_0$, define
\[
S_J:=
\bigl\{
y\in\Omega:
2^{-J-1}R_0
<
\operatorname{dist}(y,\partial\Omega)
\le
2^{-J+1}R_0
\bigr\}.
\]
Then there exist constants $0<c_1<c_2$ and $C>0$, depending only on
$\Omega$ and $d$, such that, for all $x\in\Omega$, all
$j\in\mathbb N_0$, and all $J\ge j$, the set $S_J\cap B(x,2^{-j}R_0)$
can be covered by balls $B(y_\nu,c_1 2^{-J}R_0)$, $\nu=1,\ldots,N$,
where $y_\nu\in S_J\cap B(x,2^{-j}R_0)$, $N\le C2^{(J-j)(d-1)}$,
and $B(y_\nu,c_2 2^{-J}R_0)\subset\Omega$, $\nu=1,\ldots,N$.
\end{lemma}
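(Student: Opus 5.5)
The argument is a maximal-packing construction combined with a volume count near the Lipschitz boundary. Fix $x\in\Omega$, $j\in\mathbb N_0$, $J\ge j$, and abbreviate $\delta:=2^{-J}R_0$, $\rho:=2^{-j}R_0$. Every $y\in S_J$ satisfies $\operatorname{dist}(y,\partial\Omega)>\delta/2$. We therefore take $c_2:=1/4$, so that $B(y,c_2\delta)\subset B(y,\operatorname{dist}(y,\partial\Omega))\subset\Omega$ for every such $y$, and set $c_1:=c_2/2=1/8$.

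Next, choose a maximal family $\{y_\nu\}\subset S_J\cap B(x,\rho)$ whose points are pairwise at distance at least $c_1\delta$. Such a family exists and is finite because $\Omega$ is bounded. By maximality, the balls $B(y_\nu,c_1\delta)$ cover $S_J\cap B(x,\rho)$. By separation, the balls $B(y_\nu,c_1\delta/2)$ are pairwise disjoint. Each of these balls is contained in the neighbourhood
\[
T:=\{z:\ \delta/4<\operatorname{dist}(z,\partial\Omega)\le 3\delta\}\cap\Omega\cap B(x,\rho+\delta),
\]
since distance to $\partial\Omega$ is $1$-Lipschitz and $c_1\delta/2<\delta/4$. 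Consequently
\[
N\,c_d(c_1\delta)^d\le |T|,
\]
and the whole claim reduces to the volume estimate $|T|\lesssim \rho^{d-1}\delta$ with $\rho+\delta\le 2\rho$. Granting this estimate gives $N\lesssim \rho^{d-1}\delta^{1-d}=2^{(J-j)(d-1)}$.

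The main obstacle is this boundary-layer estimate, uniformly in $x$ and at all scales $\rho\le R_0$. We intend to cover $\partial\Omega$ by finitely many Lipschitz charts, in each of which $\Omega$ is locally the supergraph $\{x_d>\gamma(x')\}$ with $\gamma$ being $L$-Lipschitz, and to let $r_0>0$ be a Lebesgue number for this cover. In a chart, $\operatorname{dist}(z,\partial\Omega)\sim x_d-\gamma(x')$ with constants depending on $L$. Hence $T\cap B(x,2\rho)$ is contained in
\[
\{(x',x_d):\ |x'-x_0'|\lesssim\rho,\ 0<x_d-\gamma(x')\lesssim\delta\},
\]
whose measure by Fubini is $\lesssim\rho^{d-1}\delta$.

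The chart argument applies only when $\rho\lesssim r_0$. For $\rho\gtrsim r_0$ we cover $B(x,2\rho)\cap\overline\Omega$ by $O(1)$ chart neighbourhoods, a number depending only on $\Omega$ because $\rho\le R_0$. In each neighbourhood the layer has measure $\lesssim r_0^{d-1}\delta\lesssim\rho^{d-1}\delta$. If $B(x,2\rho)$ meets no chart near the boundary, then $T\cap B(x,2\rho)=\emptyset$ once $\delta$ is small compared with $r_0$. For the finitely many $J$ with $\delta\gtrsim r_0$, the trivial bound $|T|\le|\Omega|\lesssim \rho^{d-1}\delta$ suffices, since in that case both $\rho$ and $\delta$ are comparable to $R_0$. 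All constants depend only on $d$, $L$, the number of charts, and $R_0$, which gives the uniform bound $N\le C2^{(J-j)(d-1)}$.
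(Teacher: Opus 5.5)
Your proposal is correct and has the same overall structure as the paper's proof. Both take a maximal $c_1\delta$-separated family in $S_J\cap B(x,\rho)$. Both pack the disjoint balls $B(y_\nu,c_1\delta/2)$ into the layer $\{\operatorname{dist}(\cdot,\partial\Omega)<3\delta\}\cap B(x,2\rho)$ and use the layer bound $\lesssim\rho^{d-1}\delta$. Both get $B(y_\nu,c_2\delta)\subset\Omega$ from $\operatorname{dist}(y_\nu,\partial\Omega)>\delta/2$.

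The only real difference is how the layer bound is proved.

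\emph{Your route.} You localize to charts, take a Lebesgue number $r_0$, and split into the cases $\rho\lesssim r_0$, $\rho\gtrsim r_0$ and $\delta\gtrsim r_0$. This works, but two points need care.
\begin{enumerate}
\item At small scales you must ensure that the nearest boundary points of $z\in T$ lie in the same chart. Your choice does guarantee this: $T$ and those nearest points all sit in a ball of radius $O(\rho)$ centred on $\partial\Omega$.
\item Only the one-sided implication $\operatorname{dist}(z,\partial\Omega)\lesssim\delta\Rightarrow x_d-\gamma(x')\lesssim\delta$ is needed, not the two-sided comparability you state. In the large-scale case, what you actually cover by $O(1)$ chart neighbourhoods is the thin layer, not all of $B(x,2\rho)\cap\overline\Omega$.
\end{enumerate}

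\emph{The paper's route.} It avoids these issues by extending each local Lipschitz function to all of $\mathbb R^{d-1}$ with the same constant. This gives $\partial\Omega\subset\bigcup_{k=1}^K\Gamma_k$ with global graphs. Then $\operatorname{dist}(z,\Gamma_k)<s$ forces $|z_d-g_k(z')|<(1+L)s$. Fubini now gives $|\{\operatorname{dist}(\cdot,\partial\Omega)<s\}\cap B(x,bR)|\lesssim R^{d-1}s$ at every scale at once, with no Lebesgue number and no case split.

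Your argument stays closer to the usual local description of Lipschitz domains. The paper's global-graph inclusion gives uniformity in scale for free and needs only the one-sided distance implication.
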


\begin{proof}
\emph{Step }1. 
We first claim that, for fixed $a,b>0$, $x\in\mathbb R^d$ and $0<t\le R\le R_0$,
\begin{equation}\label{eq:tubular-neighbourhood-estimate}
\bigl|
\bigl\{
z\in\mathbb R^d:
\operatorname{dist}(z,\partial\Omega)<at
\bigr\}
\cap B(x,bR)
\bigr|
\le C R^{d-1}t.
\end{equation}
Here and below, $C$ may depend on $a$, $b$, $\Omega$ and $d$. 

Indeed, since $\Omega$ is a bounded Lipschitz domain,
$\partial\Omega$ is compact and can be covered by finitely many
coordinate patches. After a rigid change of coordinates in each
patch, the corresponding part of $\partial\Omega$ is contained in a
Lipschitz graph $\Gamma_k=\{(u,g_k(u)):u\in\mathbb R^{d-1}\}$, $k=1,\ldots,K$,
where the locally defined Lipschitz functions have been extended to
all of $\mathbb R^{d-1}$ without increasing their Lipschitz constants.
Thus,
\[
\partial\Omega\subset\bigcup_{k=1}^K\Gamma_k.
\]

It therefore suffices to prove that, for all Lipschitz graphs
$\Gamma=\{(u,g(u)):u\in\mathbb R^{d-1}\}$
with Lipschitz constant $L$ and all $s>0$,
\begin{equation}\label{eq:graph-tubular-estimate}
\bigl|
\bigl\{
z\in\mathbb R^d:
\operatorname{dist}(z,\Gamma)<s
\bigr\}
\cap B(x,bR)
\bigr|
\le C_{b,d}(1+L)R^{d-1}s.
\end{equation}
To prove this, let $z=(z',z_d)$. If
$\operatorname{dist}(z,\Gamma)<s$, then there exists
$u\in\mathbb R^{d-1}$ such that $|z'-u|<s$ and $|z_d-g(u)|<s$.
Consequently,
\[
\begin{aligned}
|z_d-g(z')|
\le |z_d-g(u)|+|g(u)-g(z')|<s+L|u-z'| <(1+L)s.
\end{aligned}
\]
On the other hand, if $z\in B(x,bR)$, then $z'$ belongs to a
$(d-1)$-dimensional ball of radius $bR$. Hence, by Fubini's theorem,
\[
\begin{aligned}
\bigl|
\bigl\{
z\in\mathbb R^d:
\operatorname{dist}(z,\Gamma)<s
\bigr\}
\cap B(x,bR)
\bigr| \le
\int_{B_{\mathbb R^{d-1}}(x',bR)}
2(1+L)s\,dz'
\le
C_{b,d}(1+L)R^{d-1}s,
\end{aligned}
\]
which proves \eqref{eq:graph-tubular-estimate}. 
Taking $s=at$, using the finite covering of $\partial\Omega$ by the
graphs $\Gamma_k$, and summing
\eqref{eq:graph-tubular-estimate} over $k$, we obtain
\eqref{eq:tubular-neighbourhood-estimate}.

\emph{Step }2. 
Let $x\in\Omega$, $j\in\mathbb N_0$, and $J\ge j$. Set $R:=2^{-j}R_0$, $t:=2^{-J}R_0$.
Then $0<t\le R$. Let 
\[
E:=S_J\cap B(x,R).
\]
If $E=\emptyset$, there is nothing to prove. So we assume that $E\neq\emptyset$.

Fix constants $0<c_1<c_2<1/2$. Choose a maximal
$c_1t$-separated family $\{y_1,\ldots,y_N\}\subset E$.
By maximality, $E\subset\bigcup_{\nu=1}^N B(y_\nu,c_1t)$.
Moreover, the balls $B(y_\nu,\frac{c_1t}{2})$, $\nu=1,\ldots,N$,
are pairwise disjoint. 
Since $y_\nu\in B(x,R)$ and $t\le R$, we have $B(y_\nu,\frac{c_1t}{2})\subset B(x,2R)$.
In addition, $y_\nu\in S_J$ implies $\operatorname{dist}(y_\nu,\partial\Omega)\le 2t$.
Therefore, if
$z\in B(y_\nu,\frac{c_1t}{2})$, then $\operatorname{dist}(z,\partial\Omega)
\le
\operatorname{dist}(y_\nu,\partial\Omega)+|z-y_\nu|
<
(2+\frac{c_1}{2})t
<3t$. 
Consequently,
\[
\bigcup_{\nu=1}^N
B\Bigl(y_\nu,\frac{c_1t}{2}\Bigr)
\subset
\{
z\in\mathbb R^d:
\operatorname{dist}(z,\partial\Omega)<3t
\}
\cap B(x,2R).
\]
Using the disjointness of these balls and
\eqref{eq:tubular-neighbourhood-estimate} with $a=3$ and $b=2$, we
obtain
\[
\begin{aligned}
N\Bigl|B\Bigl(0,\frac{c_1t}{2}\Bigr)\Bigr|
\le
\bigl|
\bigl\{
z\in\mathbb R^d:
\operatorname{dist}(z,\partial\Omega)<3t
\bigr\}
\cap B(x,2R)
\bigr| \lesssim R^{d-1}t.
\end{aligned}
\]
It follows that
\[
N
\lesssim
\frac{R^{d-1}t}{t^d}
=
\Bigl(\frac Rt\Bigr)^{d-1}
=
2^{(J-j)(d-1)}.
\]

Finally, $y_\nu\in S_J$ yields
$\operatorname{dist}(y_\nu,\partial\Omega)>t/2$. Since
$c_2<1/2$, $c_2t<\operatorname{dist}(y_\nu,\partial\Omega)$. Hence
$B(y_\nu,c_2t)\subset\Omega$.
Substituting $t=2^{-J}R_0$ then proves the lemma.
\end{proof}

\begin{proposition}\label{prop:nor-cap}
Let $\Omega\subset\mathbb R^d$ be a bounded Lipschitz domain,
$0<p<\infty$, $\varphi\in \mathcal G_p$, and set $R_0:=\operatorname{diam}(\Omega)$. 
Assume that
\begin{equation}\label{eq:Hardy-intrinsic}
\sup_{0<r\le R_0}
\frac{\varphi(r)^p}{r}
\sum_{\substack{J\in\mathbb N_0\\2^{-J}R_0\le r}}
\frac{2^{-J}R_0}
{\varphi(2^{-J}R_0)^p}
<\infty.
\end{equation}
Then
\[
M_\varphi^p(\Omega)
\hookrightarrow
\mathcal M_\varphi^p(\Omega).
\]
\end{proposition}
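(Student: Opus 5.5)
We must show that for every $x\in\Omega$ and $0<r\le R_0$,
\[
\varphi(r)^p\frac{1}{|\Omega\cap Q(x,r)|}\int_{\Omega\cap Q(x,r)}|f|^p\lesssim \|f\mid M_\varphi^p(\Omega)\|^p .
\]
By the measure density property of Lipschitz domains (Remark~\ref{rem:cubes-balls-Omega}), $|\Omega\cap Q(x,r)|\sim r^d$. Moreover $Q(x,r)\subset B(x,\sqrt d\, r)$, and $\varphi(\sqrt d\,r)\sim\varphi(r)$ for $\varphi\in\mathcal G_p$. It therefore suffices to bound $\int_{\Omega\cap B(x,R)}|f|^p$ by $R^d\varphi(R)^{-p}\|f\|^p$. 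Here we choose $j\in\mathbb N_0$ (after a harmless enlargement, and using $\varphi(cR)\sim\varphi(R)$) so that $R=2^{-j}R_0$ is comparable to $r$. Large radii $R\sim R_0$ are included, with $j=0$ or with $j$ bounded.

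The plan is to decompose $\Omega$ into the Whitney-type layers $S_J$ of Lemma~\ref{lem:boundary-covering-Omega}. Every $y\in\Omega$ satisfies $0<\operatorname{dist}(y,\partial\Omega)\le R_0$, so $\Omega=\bigcup_{J\ge0}S_J$. We split
\[
\Omega\cap B(x,R)\subset\bigcup_{J<j}\bigl(S_J\cap B(x,R)\bigr)\cup\bigcup_{J\ge j}\bigl(S_J\cap B(x,R)\bigr).
\]

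\emph{Deep layers} ($J\ge j$). We apply Lemma~\ref{lem:boundary-covering-Omega} to cover $S_J\cap B(x,R)$ by $N\lesssim 2^{(J-j)(d-1)}$ balls $B(y_\nu,c_12^{-J}R_0)$ with $B(y_\nu,c_22^{-J}R_0)\subset\Omega$. Lemma~\ref{lem:interior-ball-estimate} with $\lambda=c_2/c_1>1$ then gives, for $t_J:=2^{-J}R_0$,
\[
\int_{S_J\cap B(x,R)}|f|^p\lesssim 2^{(J-j)(d-1)}\,t_J^d\varphi(t_J)^{-p}\|f\|^p=R^{d-1}\,t_J\varphi(t_J)^{-p}\|f\|^p ,
\]
where we used $\varphi(c_1t_J)\sim\varphi(t_J)$. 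Summing over $J\ge j$, that is over $t_J\le R$, and invoking \eqref{eq:Hardy-intrinsic} at radius $R$ yields
\[
\sum_{J\ge j}\int_{S_J\cap B(x,R)}|f|^p\lesssim R^{d-1}\cdot R\,\varphi(R)^{-p}\|f\|^p=R^d\varphi(R)^{-p}\|f\|^p ,
\]
which is exactly the required bound.

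\emph{Shallow layers} ($J<j$). Points $y$ with $\operatorname{dist}(y,\partial\Omega)>2^{-j}R_0=R$ lie at distance $\gtrsim R$ from the boundary. Hence $\Omega\cap B(x,R)\setminus\bigcup_{J\ge j}S_J$ is covered by boundedly many balls of radius $cR$ whose $\lambda$-dilates stay in $\Omega$; we take a maximal $cR$-separated net in $B(x,R)$, with $c$ small relative to the distance margin. Lemma~\ref{lem:interior-ball-estimate} then bounds this part by $R^d\varphi(R)^{-p}\|f\|^p$ as well.

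Adding the two parts and dividing by $r^d\varphi(r)^{-p}$ completes the argument.

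The main obstacle is bookkeeping at the interface $J\approx j$ and for radii comparable to $R_0$. Handling this requires matching the dyadic index $j$ to the continuous radius $r$, ensuring that the layers with $J<j$ really are at distance $\gtrsim R$ from $\partial\Omega$ (a factor 2 in the definition of $S_J$ needs care), and checking that the condition \eqref{eq:Hardy-intrinsic} applies at the radius $R$ rather than $r$. All of these are absorbed by the doubling-type consequence $\varphi(cr)\sim\varphi(r)$ of $\mathcal G_p$.
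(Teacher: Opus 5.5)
Your proposal is correct and follows essentially the same route as the paper. Both arguments pass to a dyadic radius $R=2^{-j}R_0\sim r$ with $R\le R_0$, cover the layers $S_J\cap B(x,R)$, $J\ge j$, by Lemma~\ref{lem:boundary-covering-Omega}, and bound each covering ball with Lemma~\ref{lem:interior-ball-estimate}; the identity $2^{(J-j)(d-1)}t_J^d=R^{d-1}t_J$ then lets \eqref{eq:Hardy-intrinsic} be applied at radius $R$. The only difference is organizational. The paper first disposes of interior balls ($B(x,2r)\subset\Omega$); in the boundary case it uses the covering lemma at scale $J$ for the single extra layer $J=j-1$. You instead handle the whole part of $\Omega\cap B(x,R)$ at distance $\gtrsim R$ from $\partial\Omega$ by a bounded net of interior balls, which works equally well.
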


\begin{proof}
Let $f\in M_\varphi^p(\Omega)$.
By Remark~\ref{rem:cubes-balls-Omega}, it suffices to estimate
the quasi-norm using balls. 

\emph{Step }1. Let $x\in\Omega$ and $0<r\le R_0$. If
$B(x,2r)\subset\Omega$, then Lemma~\ref{lem:interior-ball-estimate} with $\lambda=2$ gives
\[
\varphi(r)
\Bigl(
\frac{1}{|\Omega\cap B(x,r)|}
\int_{\Omega\cap B(x,r)}|f(y)|^p\,dy
\Bigr)^{1/p}
\lesssim \|f\mid M_\varphi^p(\Omega)\|.
\]

\emph{Step }2. We now assume that $B(x,2r)\not\subset\Omega$. Choose $j\in\mathbb N_0$ such that
$2^{-j-1}R_0<r\le2^{-j}R_0$, and set
$r_j:=2^{-j}R_0$. Then $r\sim r_j$. Since
$\varphi\in \mathcal G_p$ is almost increasing, we have $\varphi(r)\lesssim\varphi(r_j)$, and
$B(x,r)\subset B(x,r_j)$. Moreover, the measure density property of the bounded Lipschitz domain gives 
$|\Omega\cap B(x,r)|\gtrsim r^d\sim r_j^d$, cf. \cite{HKT08}. Consequently,
\[
\begin{aligned}
\varphi(r)
\Bigl(
\frac{1}{|\Omega\cap B(x,r)|}
\int_{\Omega\cap B(x,r)}|f(y)|^p\,dy
\Bigr)^{1/p}\lesssim
\varphi(r_j)r_j^{-d/p}
\Bigl(
\int_{\Omega\cap B(x,r_j)}|f(y)|^p\,dy
\Bigr)^{1/p}.
\end{aligned}
\]
Since $B(x,2r_j)\not\subset\Omega$, we have
$\operatorname{dist}(x,\partial\Omega)<2r_j$. 
Hence all
$y\in B(x,r_j)$ satisfy
$\operatorname{dist}(y,\partial\Omega)<3r_j$. 
Therefore 
\[
\Omega\cap B(x,r_j)
\subset
\bigcup_{J\ge \max\{j-1, 0\}}
\Bigl(S_J\cap B(x,r_j)\Bigr),
\]

\emph{Step }3. We split the resulting sum into two parts: 
$\max\{j-1, 0\}\le J<j$ and $J\ge j$.

\emph{Step }3.1. For $J\ge j$, Lemma~\ref{lem:boundary-covering-Omega} shows that
$S_J\cap B(x,r_j)$ can be covered by at most
$C2^{(J-j)(d-1)}$ balls of radius $c_1 2^{-J}R_0$ contained in $\Omega$.
Each covering ball is contained in $\Omega$, and its concentric
dilation by the fixed factor $\lambda=c_2/c_1>1$ is also contained
in $\Omega$.
Hence Lemma~\ref{lem:interior-ball-estimate} applies and 
\[
\begin{aligned}
\int_{S_J\cap B(x,r_j)}|f(y)|^p\,dy
&\lesssim
2^{(J-j)(d-1)}
(2^{-J}R_0)^d
\varphi(2^{-J}R_0)^{-p}\|f\mid M_\varphi^p(\Omega)\|^p.
\end{aligned}
\]
Since
$2^{-J}R_0=2^{-(J-j)}r_j$, we have $2^{(J-j)(d-1)}(2^{-J}R_0)^d=r_j^{d-1}(2^{-J}R_0)$.
Thus
\begin{equation}\label{eq:boundary-layer-main}
\sum_{J\ge j}
\int_{S_J\cap B(x,r_j)}|f(y)|^p\,dy
\lesssim
\|f\mid M_\varphi^p(\Omega)\|^p r_j^{d-1}
\sum_{J\ge j}
\frac{2^{-J}R_0}
{\varphi(2^{-J}R_0)^p}.
\end{equation}

\emph{Step 3.2.}
We next consider the finitely many indices $\max\{0,j-1\}\le J<j$.
Since $J<j$, we have $r_j=2^{-j}R_0\le 2^{-J}R_0$,
and hence
\[
S_J\cap B(x,r_j)
\subset
S_J\cap B(x,2^{-J}R_0).
\]
Apply Lemma~\ref{lem:boundary-covering-Omega} with both scale
indices equal to $J$. We obtain a covering of
$S_J\cap B(x,2^{-J}R_0)$ by balls $B(z_{J,k},c_1 2^{-J}R_0)$, $1\le k\le N_J$, such that 
$N_J\lesssim 1$ and $B(z_{J,k},c_2 2^{-J}R_0)\subset\Omega$,
where $c_2>c_1>0$ are the constants from Lemma \ref{lem:boundary-covering-Omega}.
Thus Lemma~\ref{lem:interior-ball-estimate}, with the fixed
factor $\lambda=c_2/c_1>1$, yields
\[
\begin{aligned}
\int_{S_J\cap B(x,r_j)}|f(y)|^p\,dy
\le
\sum_{k=1}^{N_J}
\int_{B(z_{J,k},c_1 2^{-J}R_0)}
|f(y)|^p\,dy \lesssim
\|f\mid M_\varphi^p(\Omega)\|^p
\frac{(2^{-J}R_0)^d}
     {\varphi(2^{-J}R_0)^p}.
\end{aligned}
\]
Since $0<j-J\le 1$, $2^{-J}R_0\sim r_j$.
Therefore $(2^{-J}R_0)^d
\lesssim
r_j^{d-1}\,2^{-J}R_0$,
and consequently
\[
\sum_{\max\{j-1,0\}\le J <j}\int_{S_J\cap B(x,r_j)}|f(y)|^p\,dy
\lesssim
\|f\mid M_\varphi^p(\Omega)\|^p
r_j^{d-1}
\sum_{\max\{j-1,0\}\le J <j}\frac{2^{-J}R_0}{\varphi(2^{-J}R_0)^p}.
\]

Combining this estimate with \eqref{eq:boundary-layer-main},
we obtain
\[
\int_{\Omega\cap B(x,r_j)}|f(y)|^p\,dy
\lesssim
\|f\mid M_\varphi^p(\Omega)\|^p
r_j^{d-1}
\sum_{J\ge\max\{j-1,0\}}
\frac{2^{-J}R_0}{\varphi(2^{-J}R_0)^p}.
\]
For $\max\{j-1,0\}\le J<j$, we have $r_j\le 2^{-J}R_0\le 2^{1}r_j$.
The almost increasing property of $\varphi$ therefore gives
$\frac{2^{-J}R_0}{\varphi(2^{-J}R_0)^p}
\lesssim
\frac{r_j}{\varphi(r_j)^p}$.
Since there are at most one such indices and
$r_j/\varphi(r_j)^p$ is the term corresponding to $J=j$,
it follows that
\[
\sum_{J\ge\max\{j-1,0\}}
\frac{2^{-J}R_0}{\varphi(2^{-J}R_0)^p}
\lesssim
\sum_{J\ge j}
\frac{2^{-J}R_0}{\varphi(2^{-J}R_0)^p}.
\]
Consequently,
\[
\int_{\Omega\cap B(x,r_j)}|f(y)|^p\,dy
\lesssim
\|f\mid M_\varphi^p(\Omega)\|^p
r_j^{d-1}
\sum_{J\ge j}
\frac{2^{-J}R_0}{\varphi(2^{-J}R_0)^p}.
\]
Using the estimate from Step~2 and applying
\eqref{eq:Hardy-intrinsic} at $r=r_j$, we conclude that
\[
\begin{aligned}
\frac{\varphi(r)^p}{|\Omega\cap B(x,r)|}
\int_{\Omega\cap B(x,r)}|f(y)|^p\,dy
\lesssim
\|f\mid M_\varphi^p(\Omega)\|^p
\frac{\varphi(r_j)^p}{r_j}
\sum_{J\ge j}
\frac{2^{-J}R_0}{\varphi(2^{-J}R_0)^p}
\lesssim
\|f\mid M_\varphi^p(\Omega)\|^p.
\end{aligned}
\]
Together with Step~1, taking the supremum over
$x\in\Omega$ and $0<r\le R_0$ and using
Remark~\ref{rem:cubes-balls-Omega} yields
\[
\|f\mid\mathcal M_\varphi^p(\Omega)\|
\lesssim
\|f\mid M_\varphi^p(\Omega)\|.
\]
This completes the proof.
\end{proof}

Combining Propositions~\ref{prop:cap-nor} and~\ref{prop:nor-cap}
immediately gives the following equivalence.

\begin{corollary}\label{cor:cap=nor}
    Let $\Omega\subset\mathbb R^d$ be a bounded Lipschitz domain,
$0<p<\infty$ and $\varphi\in \mathcal G_p$. 
Assume that condition \eqref{eq:Hardy-intrinsic} holds. 
Then
\[
\mathcal M_\varphi^p(\Omega)
=
M_\varphi^p(\Omega).
\]
\end{corollary}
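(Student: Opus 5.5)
The equality $\mathcal M_\varphi^p(\Omega)=M_\varphi^p(\Omega)$ is meant in the sense of equivalent quasi-norms, so we establish the two continuous embeddings separately and then combine them. Both are already available under the stated hypotheses, and no new estimate is required.

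First, Proposition~\ref{prop:cap-nor} gives $\mathcal M_\varphi^p(\Omega)\hookrightarrow M_\varphi^p(\Omega)$ for any bounded domain and any measurable $\varphi$. It therefore applies in particular to bounded Lipschitz domains and to $\varphi\in\mathcal G_p$, and it yields $\|f\mid M_\varphi^p(\Omega)\|\le\|f\mid\mathcal M_\varphi^p(\Omega)\|$. This direction is the trivial one: for cubes $Q(x,r)\subset\Omega$ one has $\Omega\cap Q(x,r)=Q(x,r)$, so the supremum defining $M_\varphi^p(\Omega)$ is taken over a subfamily of the averages controlled by $\|f\mid\mathcal M_\varphi^p(\Omega)\|$.

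Second, since $\Omega$ is bounded Lipschitz, $\varphi\in\mathcal G_p$, and \eqref{eq:Hardy-intrinsic} holds, Proposition~\ref{prop:nor-cap} gives $M_\varphi^p(\Omega)\hookrightarrow\mathcal M_\varphi^p(\Omega)$, that is,
\[
\|f\mid\mathcal M_\varphi^p(\Omega)\|\lesssim\|f\mid M_\varphi^p(\Omega)\|.
\]

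Combining the two embeddings, both spaces contain exactly the same measurable functions on $\Omega$, and the quasi-norms are equivalent. This is precisely the asserted identity. There is no genuine obstacle at this stage. The only substantive work was the boundary-layer covering argument behind Proposition~\ref{prop:nor-cap}, namely Lemma~\ref{lem:boundary-covering-Omega} together with the summability condition \eqref{eq:Hardy-intrinsic}, and that has already been carried out.
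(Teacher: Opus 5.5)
Your proposal is correct and is exactly the paper's argument: the corollary follows by combining the trivial embedding of Proposition~\ref{prop:cap-nor} with the embedding $M_\varphi^p(\Omega)\hookrightarrow\mathcal M_\varphi^p(\Omega)$ of Proposition~\ref{prop:nor-cap}. Both hypotheses needed for the second embedding (bounded Lipschitz domain, $\varphi\in\mathcal G_p$, and \eqref{eq:Hardy-intrinsic}) are assumed, so nothing further is required.
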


We next investigate the relation between the spaces $M_\varphi^p(\Omega)$ defined by cubes and balls. 
Define the ball-based quasi-norm by
\[
\|f\mid M_{\varphi,\mathrm{ball}}^p(\Omega)\|
:=
\sup_{B(x,r)\subset\Omega}
\varphi(r)
\Bigl(
r^{-d}\int_{B(x,r)}|f(y)|^p\,dy
\Bigr)^{1/p}.
\]

\begin{corollary}\label{cor:interior-balls-cubes}
Let $\Omega\subset\mathbb R^d$ be a bounded Lipschitz domain,
$0<p<\infty$, and $\varphi\in\mathcal G_p$.
Assume that \eqref{eq:Hardy-intrinsic} holds.
Then
\[
M_{\varphi,\mathrm{ball}}^p(\Omega)
=
M_\varphi^p(\Omega)
=
\mathcal M_\varphi^p(\Omega). 
\]
\end{corollary}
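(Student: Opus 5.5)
Since Corollary~\ref{cor:cap=nor} already gives $M_\varphi^p(\Omega)=\mathcal M_\varphi^p(\Omega)$ under \eqref{eq:Hardy-intrinsic}, only $M_{\varphi,\mathrm{ball}}^p(\Omega)=M_\varphi^p(\Omega)$ remains. I will prove two continuous embeddings. In each, a set of one shape contained in $\Omega$ is controlled by finitely many sets of the other shape, also contained in $\Omega$, at a comparable scale. The difficulty noted in Remark~\ref{rem:cube-ball-MO} is that the naive enclosing set can leave $\Omega$. I avoid enclosing sets entirely and instead use coverings by small interior sets, exactly as in Lemma~\ref{lem:interior-ball-estimate}. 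Where no interior room is available, I route through $\mathcal M_\varphi^p(\Omega)$.

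\emph{Step 1: $M_\varphi^p(\Omega)\hookrightarrow M_{\varphi,\mathrm{ball}}^p(\Omega)$.} Let $B(x,r)\subset\Omega$. Then $x\in\Omega$ and $r\le\operatorname{diam}(\Omega)$. By the ball version of the $\mathcal M_\varphi^p(\Omega)$ quasi-norm (Remark~\ref{rem:cubes-balls-Omega}), together with $\Omega\cap B(x,r)=B(x,r)$ and $|B(x,r)|\sim r^d$, I get
\[
\varphi(r)\Bigl(r^{-d}\int_{B(x,r)}|f|^p\Bigr)^{1/p}\lesssim \|f\mid\mathcal M_\varphi^p(\Omega)\|\lesssim\|f\mid M_\varphi^p(\Omega)\|.
\]
The last inequality is Proposition~\ref{prop:nor-cap}, which uses \eqref{eq:Hardy-intrinsic}. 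This avoids having to place a cube inside the ball region, which would otherwise require boundary analysis.

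\emph{Step 2: $M_{\varphi,\mathrm{ball}}^p(\Omega)\hookrightarrow M_\varphi^p(\Omega)$.} I would bound $\|f\mid\mathcal M_\varphi^p(\Omega)\|$ by $\|f\mid M_{\varphi,\mathrm{ball}}^p(\Omega)\|$ by rerunning the proof of Proposition~\ref{prop:nor-cap} with the ball norm in place of the cube norm; then Proposition~\ref{prop:cap-nor} closes the chain. The only ingredient of that proof that used cubes is Lemma~\ref{lem:interior-ball-estimate}. So the key step is a ball-based version of it: if $B(x,\lambda r)\subset\Omega$, then $\int_{B(x,r)}|f|^p\lesssim r^d\varphi(r)^{-p}\|f\mid M_{\varphi,\mathrm{ball}}^p(\Omega)\|^p$. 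This is immediate, and even simpler than the cube version, because $B(x,r)\subset\Omega$ is itself admissible; one uses $\varphi(r)$ directly with no need for $\mathcal G_p$ comparison. Step 1 of the proof of Proposition~\ref{prop:nor-cap} (interior balls) and Steps 3.1--3.2 (covering of $S_J\cap B(x,r_j)$ by balls $B(y_\nu,c_12^{-J}R_0)$ with $B(y_\nu,c_22^{-J}R_0)\subset\Omega$, via Lemma~\ref{lem:boundary-covering-Omega}) then go through verbatim. Combined with measure density and \eqref{eq:Hardy-intrinsic}, this yields $\|f\mid\mathcal M_\varphi^p(\Omega)\|\lesssim\|f\mid M_{\varphi,\mathrm{ball}}^p(\Omega)\|$.

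The main obstacle is to confirm that the proof of Proposition~\ref{prop:nor-cap} uses the source norm \emph{only} through the interior estimate of Lemma~\ref{lem:interior-ball-estimate}. The other ingredients are the cube--ball equivalence for $\mathcal M_\varphi^p(\Omega)$ in Remark~\ref{rem:cubes-balls-Omega}, measure density, and the covering lemma, and all of these are independent of the source norm. I expect this to hold, since the covering sets there are already balls, so the substitution should be harmless.
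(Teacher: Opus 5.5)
Your proposal is correct and follows essentially the same route as the paper. You get $\|f\mid M_{\varphi,\mathrm{ball}}^p(\Omega)\|\lesssim\|f\mid\mathcal M_\varphi^p(\Omega)\|$ by restricting the ball form of the $\mathcal M_\varphi^p(\Omega)$ quasi-norm to balls inside $\Omega$, and the reverse inequality by rerunning the proof of Proposition~\ref{prop:nor-cap} with the ball quasi-norm in place of Lemma~\ref{lem:interior-ball-estimate}, closing with Corollary~\ref{cor:cap=nor}. One small point: when the covering balls of radius $c_1 2^{-J}R_0$ are bounded directly by the ball quasi-norm, you still need $\varphi\in\mathcal G_p$ to pass to the scale factor $(2^{-J}R_0)^d/\varphi(2^{-J}R_0)^p$, so $\mathcal G_p$ is not entirely dispensable in the ball version.
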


\begin{proof}
By Corollary~\ref{cor:cap=nor}, it remains to compare
$M_{\varphi,\mathrm{ball}}^p(\Omega)$ with
$\mathcal M_\varphi^p(\Omega)$. Set $R_0=\operatorname{diam}(\Omega)$. 

\emph{Step }1. Remark~\ref{rem:cubes-balls-Omega} gives
\[
\|f\mid\mathcal M_\varphi^p(\Omega)\|
\sim
\sup_{\substack{x\in\Omega\\0<r\le R_0}}
\varphi(r)
\Bigl(
\frac{1}{|B(x,r)\cap\Omega|}
\int_{B(x,r)\cap\Omega}|f(y)|^p\,dy
\Bigr)^{1/p}.
\]
Taking only balls contained in $\Omega$ on the right-hand
side, and using $|B(x,r)|\sim r^d$, we obtain
\[
\|f\mid M_{\varphi,\mathrm{ball}}^p(\Omega)\|
\lesssim
\|f\mid\mathcal M_\varphi^p(\Omega)\|.
\]

\emph{Step }2. For the reverse inequality, we repeat the proof of
Proposition~\ref{prop:nor-cap}, with the ball-based
quasi-norm in place of the cube-based quasi-norm.
We give some details.

For all balls $B(z,\rho)\subset\Omega$, 
\[
\int_{B(z,\rho)}|f(y)|^p\,dy
\le
\|f\mid M_{\varphi,\mathrm{ball}}^p(\Omega)\|^p
\frac{\rho^d}{\varphi(\rho)^p}.
\]
Thus all estimates for balls that were obtained there from Lemma~\ref{lem:interior-ball-estimate} now follow directly from the source quasi-norm.

In particular, all balls in the covering provided by Lemma~\ref{lem:boundary-covering-Omega} are contained in $\Omega$. 
The integral over each such ball can therefore be estimated by the preceding inequality. 
Since its radius is a fixed multiple of $2^{-J}R_0$, the properties of $\varphi\in\mathcal G_p$ give the same scale factor ${(2^{-J}R_0)^d}/{\varphi(2^{-J}R_0)^p}$ as in the proof of Proposition~\ref{prop:nor-cap}.
The covering estimates and the subsequent summation are unchanged. 
Applying \eqref{eq:Hardy-intrinsic}, we obtain
\[
\|f\mid\mathcal M_\varphi^p(\Omega)\|
\lesssim
\|f\mid M_{\varphi,\mathrm{ball}}^p(\Omega)\|.
\]
Together with Corollary~\ref{cor:cap=nor}, this proves the asserted equivalences.
\end{proof}

In the classical case, the above results recover the corresponding Morrey spaces on bounded
Lipschitz domains.

\begin{remark}\label{rem:intrinsic-classical}
    Let $\Omega\subset\mathbb R^d$ be a bounded Lipschitz domain and
$0<p\le u<\infty$. Set $\varphi(r):=r^{d/u}$.
By Remark~\ref{rem:cubes-balls-Omega},
\[
\begin{aligned}
\|f\mid\mathcal M_\varphi^p(\Omega)\|
&\sim
\sup_{\substack{x\in\Omega\\0<r\le R_0}}
|\Omega\cap B(x,r)|^{1/u-1/p}
\Bigl(
\int_{\Omega\cap B(x,r)}
|f(y)|^p\,dy
\Bigr)^{1/p}
\\
&=
\|f\mid\mathcal M_{u,p}(\Omega)\|,
\end{aligned}
\]
and, by Corollary~\ref{cor:interior-balls-cubes},
\[
\begin{aligned}
\|f\mid M_\varphi^p(\Omega)\|
&\sim
\sup_{B(x,r)\subset\Omega}
r^{d/u-d/p}
\Bigl(
\int_{B(x,r)}|f(y)|^p\,dy
\Bigr)^{1/p}
\\
&=
\|f\mid\mathbb M_{u,p}(\Omega)\|,
\end{aligned}
\]
where $\mathcal M_{u,p}(\Omega)$ and
$\mathbb M_{u,p}(\Omega)$ were introduced in \cite[Definition~2.1]{HSS18} and we follow their notation. 

It remains to verify \eqref{eq:Hardy-intrinsic}. Set
\[
\theta:=1-\frac{dp}{u}.
\]
If $d/u<1/p$, namely $\theta>0$, then ${\varphi(r)^p}/{r}=r^{-\theta}$ and 
${2^{-J}R_0}/{\varphi(2^{-J}R_0)^p}
=(2^{-J}R_0)^\theta$.
Therefore,
\[
\frac{\varphi(r)^p}{r}
\sum_{\substack{J\in\mathbb N_0\\2^{-J}R_0\le r}}
\frac{2^{-J}R_0}{\varphi(2^{-J}R_0)^p}
=
r^{-\theta}
\sum_{\substack{J\in\mathbb N_0\\2^{-J}R_0\le r}}
(2^{-J}R_0)^\theta
\lesssim1.
\]
Thus Corollaries~\ref{cor:cap=nor}
and~\ref{cor:interior-balls-cubes} recover the coincidence of
$\mathcal M_{u,p}(\Omega)$ and $\mathbb M_{u,p}(\Omega)$ in the
range $d/u<1/p$ obtained in \cite[Theorem~2.7]{HSS18}.

When $\theta\le0$, the series in
\eqref{eq:Hardy-intrinsic} diverges. Hence the sufficient condition
used above does not apply in that range.
\end{remark}

Finally, we obtain a restriction characterization of the space $\mathcal M_\varphi^p(\Omega)$.

\begin{proposition}\label{prop:whole-space-restriction}
Let $\Omega\subset\mathbb R^d$ be a bounded domain such that there exists
$c_\Omega>0$ with
\[
|\Omega\cap Q(x,r)|\ge c_\Omega r^d,
\qquad x\in\Omega,\quad
0<r\le\operatorname{diam}(\Omega).
\]
Let $0<p<\infty$ and $\varphi\in \mathcal G_p$. Then, for all measurable
functions $f$ on $\Omega$,
\[
\|f\mid\mathcal M_\varphi^p(\Omega)\|
\sim
\inf_{\substack{
g\in\mathcal M_\varphi^p(\mathbb R^d)\\
g|_\Omega=f\text{ a.e. on }\Omega.}}
\|g\mid\mathcal M_\varphi^p(\mathbb R^d)\|.
\]
In particular, $\mathcal M_\varphi^p(\Omega)$ is the restriction of
$\mathcal M_\varphi^p(\mathbb R^d)$ to $\Omega$, with equivalent quasi-norms.
\end{proposition}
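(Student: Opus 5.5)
Note that $\mathcal M_\varphi^p(\mathbb R^d)$ here means the whole-space radial generalized Morrey space $M_\varphi^p(\mathbb R^d)$ of Definition~\ref{def:Mf}; by Remark~\ref{rem:Mfvw} with $w\equiv1$, the two coincide. The plan is to prove the two inequalities separately. The extension side is handled by extending $f$ by zero. The restriction side is handled by comparing averages over $\Omega\cap Q(x,r)$ with averages over $Q(x,r)$, using the measure density assumption.

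\emph{Restriction estimate.} Let $g\in M_\varphi^p(\mathbb R^d)$ with $g|_\Omega=f$. Fix $x\in\Omega$ and $0<r\le\operatorname{diam}(\Omega)$. Since $|\Omega\cap Q(x,r)|\ge c_\Omega r^d=c_\Omega|Q(x,r)|$, we get
\[
\varphi(r)\Bigl(\frac{1}{|\Omega\cap Q(x,r)|}\int_{\Omega\cap Q(x,r)}|f|^p\Bigr)^{1/p}
\le c_\Omega^{-1/p}\varphi(r)\Bigl(\frac{1}{|Q(x,r)|}\int_{Q(x,r)}|g|^p\Bigr)^{1/p}
\le c_\Omega^{-1/p}\|g\mid M_\varphi^p(\mathbb R^d)\|.
\]
Taking the supremum over $x,r$ and then the infimum over $g$ gives $\|f\mid\mathcal M_\varphi^p(\Omega)\|\lesssim\inf_g\|g\mid M_\varphi^p(\mathbb R^d)\|$. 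This direction uses neither $\varphi\in\mathcal G_p$ nor the restriction $r\le\operatorname{diam}(\Omega)$ in any essential way.

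\emph{Extension estimate.} Let $g:=f\chi_\Omega$. We must bound
\[
\varphi(r)\Bigl(|Q(x,r)|^{-1}\int_{Q(x,r)\cap\Omega}|f|^p\Bigr)^{1/p}
\]
for arbitrary $x\in\mathbb R^d$ and $r>0$, and only cubes meeting $\Omega$ need to be considered. There are three cases.
\begin{enumerate}[(a)]
\item If $r\le\operatorname{diam}(\Omega)$, pick $z\in\Omega\cap Q(x,r)$. Then $Q(x,r)\subset Q(z,2r)$. If moreover $2r\le\operatorname{diam}(\Omega)$, we use $\varphi(r)\lesssim\varphi(2r)$ (almost increasing) and $|Q(x,r)|\sim|Q(z,2r)|\ge|\Omega\cap Q(z,2r)|$, so the quantity is bounded by a constant times the local quantity at $(z,2r)$, hence by $\|f\mid\mathcal M_\varphi^p(\Omega)\|$.
\item If $2r>\operatorname{diam}(\Omega)=:R_0$, then $Q(x,r)\cap\Omega\subset\Omega\subset Q(z,2R_0)$, and $Q(z,2R_0)$ is admissible only after reducing to scale $R_0$. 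Instead I cover $\Omega$: $\Omega\subset Q(z,2R_0)$, and $\Omega$ itself is $\Omega\cap Q(z,2R_0)$. It is cleaner to note that $\Omega\cap Q(z,R_0)$ might not cover $\Omega$. So I cover $\Omega$ by a bounded number (depending on $d$) of cubes $Q(z_k,R_0/2)$ with $z_k\in\Omega$; such a cover exists by taking a maximal separated subset of $\Omega$ at scale comparable to $R_0$. This gives $\int_\Omega|f|^p\lesssim R_0^d\varphi(R_0)^{-p}\|f\mid\mathcal M_\varphi^p(\Omega)\|^p$.
\item For $r\ge R_0/2$, using the previous case and the fact that $t^{-d/p}\varphi(t)$ is almost decreasing,
\[
\varphi(r)r^{-d/p}\Bigl(\int_\Omega|f|^p\Bigr)^{1/p}\lesssim\varphi(R_0)R_0^{-d/p}R_0^{d/p}\varphi(R_0)^{-1}\|f\mid\mathcal M_\varphi^p(\Omega)\|=\|f\mid\mathcal M_\varphi^p(\Omega)\|.
\]
\end{enumerate}
The same covering argument also handles intermediate scales $R_0/2<r\le R_0$, where $Q(z,2r)$ may exceed the admissible scale.

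The main obstacle is bookkeeping at the largest scales. The cubes must be centered in $\Omega$ with side at most $\operatorname{diam}(\Omega)$, so the doubling $Q(x,r)\subset Q(z,2r)$ is unavailable once $2r$ exceeds $R_0$. I expect to resolve this with the fixed finite covering of $\Omega$ described above, together with the $\mathcal G_p$ comparability $\varphi(cr)\sim\varphi(r)$ for fixed $c$. The measure density hypothesis is needed only in the restriction direction.
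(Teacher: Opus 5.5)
Your proposal is correct and follows essentially the same route as the paper: the restriction bound comes from the measure density condition, and the extension bound comes from zero extension. At small scales you enlarge to a cube centered at a point of $\Omega$; at large scales you use a finite covering of $\Omega$ together with the almost decreasing property of $t^{-d/p}\varphi(t)$. The differences are cosmetic: the paper splits at $r=R_0/3$ and uses $Q(x,3r)$ with covering cubes of side $R_0$, whereas you split at $R_0/2$ and use $Q(z,2r)$ with cubes of side $R_0/2$.
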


\begin{proof}
We adapt some ideas from the proof of \cite[Theorem~2.3]{HSS18} and set $R_0:=\operatorname{diam}(\Omega)$.

\emph{Step }1. We first prove
\[
\|f\mid\mathcal M_\varphi^p(\Omega)\|
\lesssim
\|g\mid\mathcal M_\varphi^p(\mathbb R^d)\|
\]
for all $g\in\mathcal M_\varphi^p(\mathbb R^d)$ satisfying
$g|_\Omega=f$. Let $x\in\Omega$ and $0<r\le R_0$. By the assumption  
$|\Omega\cap Q(x,r)|\ge c_\Omega|Q(x,r)|$, 
\[
\begin{aligned}
&\varphi(r)
\Bigl(
\frac{1}{|\Omega\cap Q(x,r)|}
\int_{\Omega\cap Q(x,r)}|f(y)|^p\,dy
\Bigr)^{1/p}
\\
&\qquad\lesssim
\varphi(r)
\Bigl(
\frac{1}{|Q(x,r)|}
\int_{Q(x,r)}|g(y)|^p\,dy
\Bigr)^{1/p}
\le
\|g\mid\mathcal M_\varphi^p(\mathbb R^d)\|.
\end{aligned}
\]
Taking the supremum gives the desired estimate.

\emph{Step }2. For the converse estimate, let
\[
\widetilde f(x)=
\begin{cases}
    f(x), &\quad x\in \Omega\\
    0, &\quad x\in \mathbb R^d\setminus \Omega.
\end{cases}
\]
We show that
\begin{equation}
\|\widetilde f\mid\mathcal M_\varphi^p(\mathbb R^d)\|
\lesssim \|f\mid\mathcal M_\varphi^p(\Omega)\|.
\label{eb:rest-wf-f}
\end{equation}

\emph{Step }2.1. Let $Q(z,r)$ be an arbitrary cube in $\mathbb R^d$.
We first assume that $0<r\le R_0/3$. If
$Q(z,r)\cap\Omega=\emptyset$, then
$\int_{Q(z,r)}|\widetilde f(y)|^p\,dy=0$. Otherwise choose
$x\in Q(z,r)\cap\Omega$. Then $Q(z,r)\subset Q(x,3r)$. 
Since $3r\le R_0$, the definition of
$\mathcal M_\varphi^p(\Omega)$ gives
\[
\int_{Q(z,r)}|\widetilde f(y)|^p\,dy
\le
|\Omega\cap Q(x,3r)|\,\varphi(3r)^{-p}\|f\mid\mathcal M_\varphi^p(\Omega)\|^p.
\]
Using $|\Omega\cap Q(x,3r)|\lesssim r^d$ and
$\varphi(r)\lesssim\varphi(3r)$, we obtain
\begin{equation}
\varphi(r)
\Bigl(
\frac{1}{|Q(z,r)|}
\int_{Q(z,r)}|\widetilde f(y)|^p\,dy
\Bigr)^{1/p}
\lesssim \|f\mid\mathcal M_\varphi^p(\Omega)\|.
\label{eq:S2.1}
\end{equation}

\emph{Step }2.2. It remains to consider $r>R_0/3$. Since $\Omega$ is bounded, it can be
covered by finitely many cubes
$Q(x_k,R_0)$, $x_k\in\Omega$, $k=1,\ldots,N_0$, where $N_0$ depends only
on $\Omega$ and $d$. Hence
\[
\int_\Omega |f(y)|^p\,dy
\lesssim
R_0^d\varphi(R_0)^{-p}\|f\mid\mathcal M_\varphi^p(\Omega)\|^p.
\]
By $\varphi\in \mathcal G_p$, $\varphi(r)r^{-d/p}
\lesssim (R_0/3)^{-d/p}\varphi(R_0/3)\lesssim
\varphi(R_0)R_0^{-d/p}$ for $r>R_0/3$.
Using $\int_{Q(z,r)}|\widetilde f(y)|^p\,dy
\le
\int_\Omega|f(y)|^p\,dy$, 
we obtain
\begin{equation}
\varphi(r)
\Bigl(
\frac{1}{|Q(z,r)|}
\int_{Q(z,r)}|\widetilde f(y)|^p\,dy
\Bigr)^{1/p}
\lesssim \|f\mid\mathcal M_\varphi^p(\Omega)\|.
\label{eq:S2.2}
\end{equation}

Combining \eqref{eq:S2.1} and \eqref{eq:S2.2}, and taking the supremum over all
cubes $Q(z,r)\subset\mathbb R^d$, yields \eqref{eb:rest-wf-f}. Hence the proof is completed.
\end{proof}

\begin{remark}\label{rem:hss-restriction}
Proposition~\ref{prop:whole-space-restriction} is a generalized analogue of \cite[Theorem~2.3]{HSS18}. 
In \cite{HSS18}, the corresponding result is
stated for type A domains. Rather than imposing the type A condition, we
formulate the geometric assumption explicitly through the measure density
condition $|\Omega\cap Q(x,r)|\ge c_\Omega r^d$, $x\in\Omega$, $0<r\le \operatorname{diam}(\Omega)$, 
adapted to the cube-based formulation used here. This formulation makes
the precise geometric assumption transparent; in particular, it applies
to every bounded Lipschitz domain.
\end{remark}

\begin{corollary}\label{cor:M-res}
Let $\Omega\subset\mathbb R^d$ be a bounded Lipschitz domain,
$0<p<\infty$, and $\varphi\in\mathcal G_p$.
Assume that condition \eqref{eq:Hardy-intrinsic} holds.
Then, for all measurable functions $f$ on $\Omega$,
\[
\|f\mid M_\varphi^p(\Omega)\|
\sim
\inf_{\substack{
g\in M_\varphi^p(\mathbb R^d)\\
g|_\Omega=f \text{ a.e. on } \Omega}}
\|g\mid M_\varphi^p(\mathbb R^d)\|.
\]
In particular, $M_\varphi^p(\Omega)$ is the restriction of
$M_\varphi^p(\mathbb R^d)$ to $\Omega$, with equivalent quasi-norms.
\end{corollary}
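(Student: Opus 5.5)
The plan is to combine Corollary~\ref{cor:cap=nor} with Proposition~\ref{prop:whole-space-restriction}. Under the stated hypotheses ($\Omega$ bounded Lipschitz, $\varphi\in\mathcal G_p$, and \eqref{eq:Hardy-intrinsic}), Corollary~\ref{cor:cap=nor} gives
\[
\|f\mid M_\varphi^p(\Omega)\|\sim\|f\mid\mathcal M_\varphi^p(\Omega)\|
\]
for all measurable $f$ on $\Omega$. It then suffices to identify the right-hand side with the restriction quasi-norm.

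To apply Proposition~\ref{prop:whole-space-restriction}, I first check its measure density hypothesis $|\Omega\cap Q(x,r)|\ge c_\Omega r^d$ for $x\in\Omega$ and $0<r\le\operatorname{diam}(\Omega)$. For bounded Lipschitz domains this is exactly the measure density property recorded in Remark~\ref{rem:cubes-balls-Omega}, via \cite{HKT08}; see also Remark~\ref{rem:hss-restriction}. Proposition~\ref{prop:whole-space-restriction} then yields
\[
\|f\mid\mathcal M_\varphi^p(\Omega)\|\sim\inf\bigl\{\|g\mid\mathcal M_\varphi^p(\mathbb R^d)\| : g|_\Omega=f\bigr\}.
\]

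One notational point needs attention. In Proposition~\ref{prop:whole-space-restriction}, the whole-space space written $\mathcal M_\varphi^p(\mathbb R^d)$ is, within this subsection, the unweighted generalized Morrey space with radial $\varphi$. Its quasi-norm is the supremum over all cubes $Q(z,r)\subset\mathbb R^d$ of $\varphi(r)\bigl(|Q|^{-1}\int_Q|g|^p\bigr)^{1/p}$, and this is precisely $\|g\mid M_\varphi^p(\mathbb R^d)\|$ from Definition~\ref{def:Mf}. This also agrees with Remark~\ref{rem:Mfvw} with $w\equiv1$. I will state this identification explicitly, checking from the proof of Proposition~\ref{prop:whole-space-restriction} that only unweighted cube averages over all $r>0$ are used.

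Chaining the two equivalences gives the claim. The "in particular" statement follows because the infimum is finite exactly when $f$ admits an extension in $M_\varphi^p(\mathbb R^d)$; one such extension is the zero extension $\widetilde f$ constructed in the proof. I do not expect a real obstacle here. The only points requiring care are the notational identification above and confirming that Lipschitz domains satisfy the cube version of the density condition uniformly up to $r=\operatorname{diam}(\Omega)$.
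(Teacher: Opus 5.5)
Your proposal is correct and matches the paper's proof, which likewise obtains the corollary directly by chaining Corollary~\ref{cor:cap=nor} with Proposition~\ref{prop:whole-space-restriction}. Your two explicit checks make precise what the paper leaves implicit: that bounded Lipschitz domains satisfy the cube measure-density hypothesis, and that the whole-space $\mathcal M_\varphi^p(\mathbb R^d)$ in that proposition is just $M_\varphi^p(\mathbb R^d)$.
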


\begin{proof}
This directly follows from Corollary~\ref{cor:cap=nor} and Proposition~\ref{prop:whole-space-restriction}.
\end{proof}


\section{Embeddings of Komori--Shirai-type spaces}\label{sec:KS}

In this section, we investigate embeddings for Komori--Shirai-type generalized 
weighted Morrey spaces $\mathcal{M}_p^\varphi(\mathbb{R}^d;w)$. 
More precisely, we consider embeddings of the form
\begin{equation}\label{eb:main-KS}
    \mathcal{M}_{p_1}^{\varphi_1}(\mathbb{R}^d;w)
    \hookrightarrow M^{p_2}_{\varphi_2}(\mathbb{R}^d),
\end{equation}
where $M_\varphi^p(\mathbb R^d)$ denotes the unweighted generalized Morrey spaces as in Definition~\ref{def:Mf}. 
We first consider piecewise power weights and then general Muckenhoupt weights.
The corresponding embedding results for Komori--Shirai-type weighted Morrey 
spaces $\mathcal{M}_p^u(\mathbb{R}^d;w)$ were established in \cite{HS17}.

We carefully distinguish between the assumptions on $\varphi_1$ needed to prove 
sufficiency for the embedding \eqref{eb:main-KS} and those needed to prove 
necessity. The sufficiency part requires only that $\varphi_1$ be positive and measurable, whereas the necessity part establishes that $\varphi_1$ not only belongs to $\mathcal{G}_{p_1}^{w_{\alpha,\beta}}$ but also satisfies the conditions \eqref{eq:def-LC}, \eqref{eq:def-LRC} and \eqref{eq:LRC-norad}.

\subsection{Embeddings with piecewise power weights}
\label{ssec:KS-pw}

Throughout this subsection, we set
\[
D_k:=Q(0,2^k), \qquad E_k:=D_k\setminus D_{k-1}, \qquad k\in\mathbb Z.
\]
Then the sets $E_k$ are pairwise disjoint and $D_k=\bigcup_{j=-\infty}^k E_j$
up to a null set.

We begin with a useful observation concerning piecewise power weights.

\begin{lemma}\label{lem:wab-cube-annuli}
Let $w_{\alpha,\beta}$ be given by \eqref{eq:def-wab}, where $\alpha,\beta>-d$.
Then 
\begin{equation}\label{eq:weight-cube-annulus}
w_{\alpha,\beta}(x)
\sim
\frac{w_{\alpha,\beta}(D_k)}{|D_k|},
\qquad x\in E_k,
\end{equation}
uniformly in $k\in\mathbb Z$.
\end{lemma}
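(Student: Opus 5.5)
The plan is to compute both sides of \eqref{eq:weight-cube-annulus} explicitly and compare, splitting into the cases $k\le 0$ and $k\ge 1$. Note $D_k=Q(0,2^k)$ has half-side $2^{k-1}$, so for $x\in E_k$ the sup-norm satisfies $2^{k-2}\le|x|_\infty<2^{k-1}$. Hence $2^{k-2}\le |x|<\sqrt d\,2^{k-1}$, i.e. $|x|\sim 2^k$ uniformly in $k$. In addition $|D_k|=2^{kd}$.

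\emph{Pointwise side.} If $k\le 0$, every $x\in E_k$ satisfies $|x|<\sqrt d/2$. For $d\le 3$ this forces $|x|<1$, so $w_{\alpha,\beta}(x)=|x|^\alpha\sim 2^{k\alpha}$. For general $d$, a few annuli near $|x|\sim1$ (those with $|k|\le k_d$, where $k_d$ depends only on $d$) may straddle the unit sphere. On such annuli $|x|\sim 1$ with constants depending only on $d$, so both $|x|^\alpha$ and $|x|^\beta$ are comparable to $1$, and hence to $2^{k\alpha}$ and to $2^{k\beta}$. Consequently
\[
w_{\alpha,\beta}(x)\sim 2^{k\alpha}\ (k\le 0),\qquad w_{\alpha,\beta}(x)\sim 2^{k\beta}\ (k\ge1),\qquad x\in E_k,
\]
uniformly in $k$.

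\emph{Averaged side.} Write $w(D_k)=\sum_{j\le k} w(E_j)$. Since $|E_j|\sim 2^{jd}$, the pointwise bound gives $w(E_j)\sim 2^{j(d+\alpha)}$ for $j\le0$ and $w(E_j)\sim 2^{j(d+\beta)}$ for $j\ge1$.
\begin{itemize}
\item For $k\le0$, $\alpha>-d$ makes the series $\sum_{j\le k}2^{j(d+\alpha)}$ geometric and dominated by its top term. Thus $w(D_k)\sim 2^{k(d+\alpha)}$ and $w(D_k)/|D_k|\sim 2^{k\alpha}$.
\item For $k\ge1$, $w(D_k)=w(D_0)+\sum_{j=1}^k 2^{j(d+\beta)}$. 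Here $w(D_0)\sim1$, and $\beta>-d$ again lets the top term $2^{k(d+\beta)}\ge 2^{d+\beta}\gtrsim1$ dominate. So $w(D_k)/|D_k|\sim 2^{k\beta}$.
\end{itemize}
Comparing the two sides gives the claim.

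The main obstacle is purely bookkeeping: handling the transitional annuli near $|x|=1$, where $E_k$ is not contained in $\{|x|<1\}$ or in $\{|x|\ge1\}$. I would handle these as above, noting that only finitely many $k$ (depending on $d$) are affected and that on them $|x|\sim1$. The geometric-series domination, which is where $\alpha,\beta>-d$ is used essentially, is routine.
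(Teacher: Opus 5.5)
Your proof is correct and follows essentially the same route as the paper: show $|x|\sim 2^k$ on $E_k$, compare $w_{\alpha,\beta}(x)$ with $2^{k\alpha}$ or $2^{k\beta}$ and $w_{\alpha,\beta}(D_k)$ with $2^{k(d+\alpha)}$ or $2^{k(d+\beta)}$, and treat the finitely many annuli meeting the unit sphere separately, using $|x|\sim 1$ there. Your explicit geometric-series computation of $w_{\alpha,\beta}(D_k)$, which uses $\alpha,\beta>-d$, fills in a step the paper only asserts.
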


\begin{proof}
Since $x\in E_k$, we have $2^{k-2}\le |x|\le \sqrt d\,2^{k-1}$, 
so $|x|\sim 2^k$ on $E_k$. 
If $E_k$ lies in $\{|x|<1\}$, then $w_{\alpha,\beta}(x)\sim2^{j\alpha}$ and $w_{\alpha,\beta}(D_j)\sim2^{j(d+\alpha)}$. If $E_k$ lies in $\{|x|>1\}$, then $w_{\alpha,\beta}(x)\sim2^{j\beta}$ and $w_{\alpha,\beta}(D_j)\sim2^{j(d+\beta)}$.
Only finitely many annuli meet $\{|x|=1\}$, and on each of them both sides are comparable to a positive constant. This proves the assertion.
\end{proof}

We now derive sufficient conditions for the embedding \eqref{eb:main-KS} with  $w=w_{\alpha,\beta}$.

\begin{theorem}\label{thm:Mfwwab-suff}
Let $0<p_2\le p_1<\infty$, $\varphi_1$ be a positive measurable
function on $\mathbb R^d\times(0,\infty)$, 
$\varphi_2\in\mathcal G_{p_2}$ satisfy \eqref{eq:def-C}, and $w_{\alpha,\beta}$ be given by
\eqref{eq:def-wab}, where $\alpha,\beta>-d$.

Then the embedding
\begin{equation}\label{eb:Mfwwab-Mfb}
\mathcal M^{p_1}_{\varphi_1}
(\mathbb R^d;w_{\alpha,\beta})
\hookrightarrow
M^{p_2}_{\varphi_2}(\mathbb R^d)
\end{equation}
holds if
\begin{equation}\label{eq:cond1}
\sup_{Q\in\mathcal Q}
\frac{\varphi_2(Q)}{\varphi_1(Q)}
<\infty
\end{equation}
and
\begin{equation}\label{eq:cond2}
\sup_{k\in\mathbb Z}
\frac{\varphi_2(D_k)}{|D_k|^{1/p_2}}
\Bigl(
\sum_{j=-\infty}^k
\frac{|D_j|}{\varphi_1(D_j)^{p_2}}
\Bigr)^{1/p_2}
<\infty.
\end{equation}
\end{theorem}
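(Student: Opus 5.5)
Fix $f\in\mathcal M^{p_1}_{\varphi_1}(\mathbb R^d;w_{\alpha,\beta})$, write $w=w_{\alpha,\beta}$ and $\|f\|:=\|f\mid\mathcal M^{p_1}_{\varphi_1}(\mathbb R^d;w)\|$. The goal is to bound
\[
\varphi_2(Q)|Q|^{-1/p_2}\Bigl(\int_Q|f|^{p_2}\Bigr)^{1/p_2}
\]
uniformly over $Q\in\mathcal Q$. The basic tool is a local estimate on an arbitrary cube $Q$, obtained from H\"older's inequality with exponent $p_1/p_2\ge1$ applied to $|f|^{p_2}w^{p_2/p_1}\cdot w^{-p_2/p_1}$:
\[
\int_Q|f|^{p_2}\le\Bigl(\int_Q|f|^{p_1}w\Bigr)^{p_2/p_1}\Bigl(\int_Q w^{-p_2/(p_1-p_2)}\Bigr)^{1-p_2/p_1},
\]
with the obvious modification when $p_1=p_2$. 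This is wasteful for singular weights, so I will not apply it on $Q$ directly. Instead I will apply it on the pieces $E_j$ and on cubes where $w$ is essentially constant, since there Lemma~\ref{lem:wab-cube-annuli} gives $w\sim w(D_j)/|D_j|$ on $E_j$. For a set $S\subset D_j$ (or $S\subset E_j$) on which $w\sim c$ is constant, H\"older reduces to
\[
\int_S|f|^{p_2}\lesssim |S|^{1-p_2/p_1}c^{-p_2/p_1}\Bigl(\int_S|f|^{p_1}w\Bigr)^{p_2/p_1}.
\]
Taking $S=E_j$ together with $\int_{D_j}|f|^{p_1}w\le\varphi_1(D_j)^{-p_1}w(D_j)\|f\|^{p_1}$ and $c\sim w(D_j)/|D_j|$, everything collapses to
\[
\int_{E_j}|f|^{p_2}\lesssim \frac{|D_j|}{\varphi_1(D_j)^{p_2}}\|f\|^{p_2}.
\]
This is exactly the summand in \eqref{eq:cond2}.

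\textbf{Case A: $Q$ far from the origin relative to its size.} Suppose $\operatorname{dist}(Q,0)\gtrsim\ell(Q)$, say $Q\cap D_{k-2}=\emptyset$ where $2^k\sim\ell(Q)$ up to a dimensional factor. Then $|x|$ is essentially constant on $Q$, so by the explicit form of $w_{\alpha,\beta}$ (the case $|x|\approx1$ is handled by finitely many comparisons) we have $w\sim w(Q)/|Q|$ on $Q$. Applying the H\"older estimate with $S=Q$ gives
\[
\int_Q|f|^{p_2}\lesssim|Q|\,\varphi_1(Q)^{-p_2}\|f\|^{p_2},
\]
and \eqref{eq:cond1} then yields the bound $\varphi_2(Q)/\varphi_1(Q)\lesssim1$.

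\textbf{Case B: $Q$ close to the origin.} Otherwise $Q\subset D_k$ for some $k$ with $2^k\sim\ell(Q)$. Decompose $D_k=\bigcup_{j\le k}E_j$ and sum the annulus estimates:
\[
\int_Q|f|^{p_2}\le\sum_{j\le k}\int_{E_j}|f|^{p_2}\lesssim\|f\|^{p_2}\sum_{j\le k}\frac{|D_j|}{\varphi_1(D_j)^{p_2}}.
\]
It remains to replace $\varphi_2(Q)|Q|^{-1/p_2}$ by $\varphi_2(D_k)|D_k|^{-1/p_2}$. Since $|Q|\sim|D_k|$ and $\varphi_2\in\mathcal G_{p_2}$ is almost increasing in $r$, this follows once \eqref{eq:def-C} moves the center from $c(Q)$ to $0$, which is legitimate because $|c(Q)|\lesssim\ell(Q)$. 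Then \eqref{eq:cond2} finishes the case.

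\textbf{Main obstacle.} The delicate step is the geometric dichotomy: choosing the constants so that Case~A cubes really see an essentially constant weight, including cubes straddling $|x|=1$, while Case~B cubes satisfy $Q\subset D_k$ with $\ell(D_k)\le C\ell(Q)$. I would handle this by defining Case~A as $\operatorname{dist}(0,Q)\ge\ell(Q)$. In that regime $\sup_Q|x|/\inf_Q|x|\le1+\sqrt d$, so the weight is comparable to a constant on $Q$ with constants depending only on $\alpha,\beta,d$. In the complementary case $Q\subset Q(0,3\ell(Q))$, so $Q\subset D_k$ with $2^k\le6\ell(Q)$.
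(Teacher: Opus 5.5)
Your proposal is correct and follows essentially the same route as the paper. It uses a far/near dichotomy: in the far case, H\"older on $Q$ (where $w_{\alpha,\beta}$ is essentially constant) plus \eqref{eq:cond1}; in the near case, the decomposition $D_k=\bigcup_{j\le k}E_j$, Lemma~\ref{lem:wab-cube-annuli} with the source bound on each $D_j$, and the transfer $\varphi_2(Q)|Q|^{-1/p_2}\lesssim\varphi_2(D_k)|D_k|^{-1/p_2}$ via \eqref{eq:def-C} and almost monotonicity before invoking \eqref{eq:cond2}. The only slip is a harmless constant: $\operatorname{dist}(0,Q)<\ell(Q)$ gives only $Q\subset Q(0,4\ell(Q))$ (e.g.\ $Q=(0.9,1.9)$ when $d=1$), so one gets $2^k\le 8\ell(Q)$ rather than $6\ell(Q)$, which changes nothing.
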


\begin{proof}
Let $f\in \mathcal M^{p_1}_{\varphi_1} (\mathbb R^d;w_{\alpha,\beta})$
and fix an arbitrary cube $Q_0=Q(x_0,r)$, where $r=\ell(Q_0)$.
We distinguish two cases according to the position of $Q_0$ relative
to the origin.

\emph{Step }1.
Assume that $|x_0|>C_d r$, 
where $C_d>0$ is a sufficiently large constant depending only on the dimension. 
Since $|x-x_0|\le \frac{\sqrt d}{2}r$, $x\in Q_0$,
we have $|x|\sim|x_0|$ on $Q_0$.
By a similar argument used in the proof of Lemma~\ref{lem:wab-cube-annuli},
\begin{equation}
w_{\alpha,\beta}(x)
\sim
\frac{w_{\alpha,\beta}(Q_0)}{|Q_0|},
\qquad x\in Q_0.
\label{eq:weight-away-origin}
\end{equation}

Since $p_2\le p_1$, H\"older's inequality and
\eqref{eq:weight-away-origin} give
\[
\begin{aligned}
\Bigl(
\int_{Q_0}|f(y)|^{p_2}\,dy
\Bigr)^{1/p_2}
&\le
|Q_0|^{1/p_2-1/p_1}
\Bigl(
\int_{Q_0}|f(y)|^{p_1}\,dy
\Bigr)^{1/p_1}
\\
&\lesssim
|Q_0|^{1/p_2-1/p_1}
\Bigl(
\frac{|Q_0|}{w_{\alpha,\beta}(Q_0)}
\Bigr)^{1/p_1}
\Bigl(
\int_{Q_0}|f(y)|^{p_1}
w_{\alpha,\beta}(y)\,dy
\Bigr)^{1/p_1}
\\
&\le
\frac{|Q_0|^{1/p_2}}{\varphi_1(Q_0)}
\|f\mid
\mathcal M^{p_1}_{\varphi_1}
(\mathbb R^d;w_{\alpha,\beta})\|.
\end{aligned}
\]
It follows from \eqref{eq:cond1} that
\[
\begin{aligned}
\varphi_2(Q_0)
\Bigl(
\frac{1}{|Q_0|}
\int_{Q_0}|f(y)|^{p_2}\,dy
\Bigr)^{1/p_2}
&\lesssim
\frac{\varphi_2(Q_0)}{\varphi_1(Q_0)}
\|f\mid
\mathcal M^{p_1}_{\varphi_1}
(\mathbb R^d;w_{\alpha,\beta})\|
\\
&\lesssim
\|f\mid
\mathcal M^{p_1}_{\varphi_1}
(\mathbb R^d;w_{\alpha,\beta})\|.
\end{aligned}
\]

\emph{Step }2.
Assume now that $|x_0|\le C_d r$.
Let $k\in\mathbb Z$ be the smallest integer such that
$Q_0\subset D_k$.
Since $|x_0|\lesssim r$, the cube $Q_0$ is contained in an
origin-centered cube with side length comparable to $r$. By the
minimality of $k$, it follows that
$2^k\sim r$ and hence $|Q_0|\sim|D_k|$. 
Moreover, the centers of $Q_0$ and $D_k$ are at distance comparable
to their side lengths. 
Therefore, \eqref{eq:def-C} and $\varphi_2\in\mathcal G_{p_2}$ yield
\begin{equation}
\frac{\varphi_2(Q_0)}{|Q_0|^{1/p_2}}
\lesssim
\frac{\varphi_2(D_k)}{|D_k|^{1/p_2}}.
\label{eq:qm-Q0toDk}
\end{equation}

Note that the definition of
$\mathcal M^{p_1}_{\varphi_1}(\mathbb R^d;w_{\alpha,\beta})$
implies, for all $j\in\mathbb Z$,
\begin{equation}\label{eq:dyadic-cube-source}
    \Bigl(
\int_{D_j}|f(x)|^{p_1}w_{\alpha,\beta}(x)\,dx
\Bigr)^{1/p_1}
\le
\frac{w_{\alpha,\beta}(D_j)^{1/p_1}}{\varphi_1(D_j)}
\|f\mid
\mathcal M^{p_1}_{\varphi_1}
(\mathbb R^d;w_{\alpha,\beta})\|.
\end{equation}
We now use the decomposition
$D_k=\bigcup_{j=-\infty}^k E_j$ up to a null set. 
For $j\le k$, H\"older's inequality,
\eqref{eq:weight-cube-annulus}, and
\eqref{eq:dyadic-cube-source} give
\[
\begin{aligned}
\Bigl(
\int_{E_j}|f(y)|^{p_2}\,dy
\Bigr)^{1/p_2}
&\le
|E_j|^{1/p_2-1/p_1}
\Bigl(
\int_{E_j}|f(y)|^{p_1}\,dy
\Bigr)^{1/p_1}
\\
&\lesssim
|D_j|^{1/p_2-1/p_1}
\Bigl(
\frac{|D_j|}{w_{\alpha,\beta}(D_j)}
\Bigr)^{1/p_1}
\Bigl(
\int_{E_j}|f(y)|^{p_1}
w_{\alpha,\beta}(y)\,dy
\Bigr)^{1/p_1}
\\
&\le
|D_j|^{1/p_2-1/p_1}
\Bigl(
\frac{|D_j|}{w_{\alpha,\beta}(D_j)}
\Bigr)^{1/p_1}
\Bigl(
\int_{D_j}|f(y)|^{p_1}
w_{\alpha,\beta}(y)\,dy
\Bigr)^{1/p_1}
\\
&\le
\frac{|D_j|^{1/p_2}}{\varphi_1(D_j)}
\|f\mid
\mathcal M^{p_1}_{\varphi_1}
(\mathbb R^d;w_{\alpha,\beta})\|.
\end{aligned}
\]
Thus,
\begin{equation}
\Bigl(
\int_{D_k}|f(y)|^{p_2}\,dy
\Bigr)^{1/p_2}
\lesssim
\|f\mid
\mathcal M^{p_1}_{\varphi_1}
(\mathbb R^d;w_{\alpha,\beta})\|
\Bigl(
\sum_{j=-\infty}^k
\frac{|D_j|}{\varphi_1(D_j)^{p_2}}
\Bigr)^{1/p_2}.
\label{eq:Dk-target-estimate}
\end{equation}

Since $Q_0\subset D_k$, it follows from
\eqref{eq:qm-Q0toDk}, \eqref{eq:Dk-target-estimate}, and
\eqref{eq:cond2} that
\[
\begin{aligned}
\frac{\varphi_2(Q_0)}{|Q_0|^{1/p_2}}
\Bigl(
\int_{Q_0}|f(y)|^{p_2}\,dy
\Bigr)^{1/p_2}
&\lesssim
\frac{\varphi_2(D_k)}{|D_k|^{1/p_2}}
\Bigl(
\int_{D_k}|f(y)|^{p_2}\,dy
\Bigr)^{1/p_2}
\\
&\lesssim
\|f\mid
\mathcal M^{p_1}_{\varphi_1}
(\mathbb R^d;w_{\alpha,\beta})\|
\frac{\varphi_2(D_k)}{|D_k|^{1/p_2}}
\Bigl(
\sum_{j=-\infty}^k
\frac{|D_j|}{\varphi_1(D_j)^{p_2}}
\Bigr)^{1/p_2}
\\
&\lesssim
\|f\mid
\mathcal M^{p_1}_{\varphi_1}
(\mathbb R^d;w_{\alpha,\beta})\|.
\end{aligned}
\]
Taking the supremum over all arbitrary cubes
$Q_0\in\mathcal Q$ proves \eqref{eb:Mfwwab-Mfb}. Hence we finish the proof. 
\end{proof}

\begin{remark}\label{rem:asymmetry-regularity}
The assumptions on $\varphi_1$ and $\varphi_2$ in
Theorem~\ref{thm:Mfwwab-suff} play different roles. The proof does not
require any comparison between the values of $\varphi_1$ on different
cubes. Indeed, each source estimate is applied directly on a cube $D_j$
appearing in the definition of the source Morrey quasi-norm; see \eqref{eq:dyadic-cube-source}.

In the near-origin case, however, an arbitrary cube $Q$ occurring in
the target quasi-norm is replaced by an origin-centered dyadic cube
$D_k$ with $\ell(D_k)\sim\ell(Q)$. 
Thus the comparison between
$\varphi_2(Q)$ and $\varphi_2(D_k)$ follows from the almost increasing property
of $\varphi_2\in \mathcal G_{p_2}$ together with condition~\eqref{eq:def-C}.
These assumptions are therefore used for $\varphi_2$, whereas no
corresponding comparison is needed for $\varphi_1$.
\end{remark}

We now turn to the necessity part.

\begin{theorem}\label{thm:Mfwwab-nece}
Let $0<p_1,p_2<\infty$, $\varphi_1\in\mathcal G_{p_1}^{w_{\alpha,\beta}}$ satisfy \eqref{eq:def-LC}--\eqref{eq:LRC-norad}, $\varphi_2\in\mathcal G_{p_2}$ satisfy \eqref{eq:def-C}, and $w_{\alpha,\beta}$ be
given by \eqref{eq:def-wab}, where $\alpha,\beta>-d$.

If the embedding
\[
\mathcal M^{p_1}_{\varphi_1}
(\mathbb R^d;w_{\alpha,\beta})
\hookrightarrow
M^{p_2}_{\varphi_2}(\mathbb R^d)
\]
holds, then
\[
p_2\le p_1
\]
and
\begin{equation}\label{eq:cond1'}
\sup_{Q\in\mathcal Q}
\frac{\varphi_2(Q)}{\varphi_1(Q)}
<\infty.
\end{equation}
\end{theorem}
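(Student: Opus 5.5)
The condition \eqref{eq:cond1'} comes from testing the embedding on characteristic functions of cubes. The necessity of $p_2\le p_1$ comes from localizing to a regularity cube of $w_{\alpha,\beta}$ and reusing the block-packing construction of Theorem~\ref{thm:nece-Omega}.

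\emph{Step 1: the condition \eqref{eq:cond1'}.} Fix $Q_0\in\mathcal Q$ and apply the embedding to $\chi_{Q_0}$. Since $\varphi_1\in\mathcal G^{w_{\alpha,\beta}}_{p_1}$, Lemma~\ref{lem:1Qww} gives $\|\chi_{Q_0}\mid\mathcal M^{p_1}_{\varphi_1}(\mathbb R^d;w_{\alpha,\beta})\|\sim\varphi_1(Q_0)$. On the target side, testing the $M^{p_2}_{\varphi_2}$-quasi-norm on $Q_0$ itself gives the trivial lower bound $\|\chi_{Q_0}\mid M^{p_2}_{\varphi_2}(\mathbb R^d)\|\ge\varphi_2(Q_0)$. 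Hence $\varphi_2(Q_0)\lesssim\varphi_1(Q_0)$ uniformly in $Q_0$.

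\emph{Step 2: reduction to a cube where the weight is constant.} By Remark~\ref{rem:regularity-cubes-applications}, $w_{\alpha,\beta}$ satisfies the hypotheses of Proposition~\ref{pro:regularity-cubes}. Concretely, I take the cube $Q_\ast=Q(x_\ast,R_\ast)$ from the local radial comparability assumption \eqref{eq:def-LRC}. If $0\in\overline{Q_\ast}$, I first shrink it to a subcube $Q(x_\ast',R_\ast')\subset Q_\ast$ with $\overline{Q(x_\ast',R_\ast')}\not\ni 0$. This subcube is again a cube of local radial comparability with the same $\psi$. There $c\le w_{\alpha,\beta}\le C$, so $w_{\alpha,\beta}\sim 1$ on the subcube. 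I keep writing $Q_\ast$ for it.

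\emph{Step 3: block packing.} Suppose $p_2>p_1$. I define $g_r=\sum_{j\in J_r}\psi(r)^{-1}\chi_{Q(x_j,r)}$ exactly as in Step~2.1 of Theorem~\ref{thm:nece-Omega}, with $s_r=c_0r\psi(r)^{-p_1/d}$, and with centres in $Q(x_\ast,R_\ast/2)$. Step~2.3 of that proof is a purely unweighted computation on $Q_\ast$, and it transfers verbatim. It yields
\[
\|g_r\mid M^{p_2}_{\varphi_2}(\mathbb R^d)\|\gtrsim\varphi_2(Q_\ast)\psi(r)^{p_1/p_2-1}\to\infty
\]
by \eqref{eq:LRC-norad}. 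It remains to show that $\|g_r\mid\mathcal M^{p_1}_{\varphi_1}(\mathbb R^d;w_{\alpha,\beta})\|\lesssim 1$ uniformly in $r$. Take an arbitrary cube $Q=Q(x,t)$ meeting $\operatorname{supp}g_r\subset Q_\ast$. The local quantity is
\[
\varphi_1(Q)\,w_{\alpha,\beta}(Q)^{-1/p_1}\Bigl(\int_Q|g_r|^{p_1}w_{\alpha,\beta}\Bigr)^{1/p_1}.
\]

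For $t\le R_\ast$, the cube $Q$ lies in a fixed enlargement of $Q_\ast$ that still avoids a neighbourhood of the origin, after shrinking $Q_\ast$ further if needed. There $w_{\alpha,\beta}\sim 1$, so $w_{\alpha,\beta}(Q)\sim|Q|$ and the weight drops out of the integral. The expression is then comparable to the unweighted one, and Cases (i)--(iii) of Theorem~\ref{thm:nece-Omega} apply. They use only \eqref{eq:def-LC}, \eqref{eq:def-LRC} and $\psi\in\mathcal G_{p_1}$.

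For $t>R_\ast$, I use $\int_Q|g_r|^{p_1}w_{\alpha,\beta}\lesssim\int|g_r|^{p_1}\sim R_\ast^d$. This reduces the claim to
\[
\varphi_1(Q)\,w_{\alpha,\beta}(Q)^{-1/p_1}\lesssim\varphi_1(Q_\ast)\,w_{\alpha,\beta}(Q_\ast)^{-1/p_1},
\]
which is precisely the second condition in Definition~\ref{def:Gpw}, valid since $Q\cap Q_\ast\neq\emptyset$ and $\ell(Q)\ge\ell(Q_\ast)$. The right-hand side is $\sim\psi(R_\ast)R_\ast^{-d/p_1}$, so the quantity is $\lesssim1$. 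This contradicts the embedding, and therefore $p_2\le p_1$.

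\emph{Main obstacle.} The main obstacle is the large-cube regime $t>R_\ast$, where the weight is no longer comparable to a constant and may be singular at $0$. My plan is to handle it entirely through the $\mathcal G^{w}_{p_1}$ monotonicity of $\varphi_1 w^{-1/p_1}$, as above, rather than through any pointwise control of the weight. A secondary point to check carefully is the intermediate range $t\le R_\ast$, where the enlarged cube must stay away from the origin. Choosing $Q_\ast$ small and far from $0$ at the outset, using $\mathcal G_{p_1}$ scaling of $\psi$, should settle this.
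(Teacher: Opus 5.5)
Your proposal is correct and follows the paper's route. You obtain \eqref{eq:cond1'} by testing on $\chi_{Q}$ with Lemma~\ref{lem:1Qww}, and $p_2\le p_1$ from the block-packing argument of Theorem~\ref{thm:nece-Omega} on a cube where $w_{\alpha,\beta}\sim1$. The paper only cites Remark~\ref{rem:regularity-cubes-applications} and Theorem~\ref{thm:nece-Omega}. You instead rerun the construction directly for the weighted whole-space norm: you place the regularity cube inside the local radial comparability cube and treat the cubes with $t>R_\ast$ through the second condition in Definition~\ref{def:Gpw}. This supplies exactly what a black-box appeal to the bounded-domain theorem leaves implicit, since that theorem is stated for interior cubes and for $\varphi_1\in\mathcal G_{p_1}$ rather than $\varphi_1\in\mathcal G^{w_{\alpha,\beta}}_{p_1}$.
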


\begin{proof}
The necessity of $p_2\le p_1$ follows from Remark~\ref{rem:regularity-cubes-applications} and Theorem~\ref{thm:nece-Omega}. 
Testing the embedding \eqref{eb:Mfwwab-Mfb} on characteristic functions
and using Lemma~\ref{lem:1Qww}, we obtain $\varphi_2(Q)
\lesssim
\varphi_1(Q)$, $Q\in\mathcal Q$.
Taking the supremum over $Q\in\mathcal Q$ proves
\eqref{eq:cond1'}.
\end{proof}

\begin{remark}\label{gap}
The necessary condition \eqref{eq:cond1'} and the sufficient condition
\eqref{eq:cond2} arise from different arguments. Testing the embedding
with the characteristic function of a cube gives
\eqref{eq:cond1'}, which is an estimate for each cube separately. In
the proof of Theorem~\ref{thm:Mfwwab-suff}, however, a general function
is decomposed over the dyadic cube annuli $E_j$. For a fixed $D_k$, the
estimates for all $E_j$, $j\le k$, must be added. The requirement that
this sum be finite gives \eqref{eq:cond2}.

Under the present hypotheses, testing with characteristic functions does not show that \eqref{eq:cond2} is necessary. 
Thus, \eqref{eq:cond2} is stated only as a sufficient condition. Its divergence does not in general imply failure of the embedding; it only shows that the argument used in the proof of Theorem~\ref{thm:Mfwwab-suff} does not apply.

In special cases, the gap may disappear. This happens, for example,
in the classical Morrey setting considered in
Remark~\ref{rem:classical-HS17}, where \eqref{eq:cond2} reduces to the
sharp parameter conditions in \cite[Theorem~3.12]{HS17}.
\end{remark}

\begin{remark}\label{rem:classical-HS17}
Let $0<p_1\le u_1<\infty$ and $0<p_2<u_2<\infty$, and set
\[
\varphi_1(Q)=w_{\alpha,\beta}(Q)^{1/u_1},
\qquad
\varphi_2(Q)=|Q|^{1/u_2},
\qquad Q\in\mathcal Q.
\]
Then the embedding \eqref{eb:Mfwwab-Mfb} becomes 
\begin{equation}\label{eb:Mwwab-Mb}
\mathcal M_{p_1}^{u_1}
(\mathbb R^d;w_{\alpha,\beta})
\hookrightarrow
M_{p_2}^{u_2}(\mathbb R^d).
\end{equation}

We first examine the necessary condition \eqref{eq:cond1}, which now
reads
\begin{equation}\label{eq:classical-local-condition}
\sup_{Q\in\mathcal Q}
\frac{|Q|^{1/u_2}}
     {w_{\alpha,\beta}(Q)^{1/u_1}}
<\infty.
\end{equation}
Let $x_0\ne0$ be fixed and take cubes $Q_0$ centered at $x_0$ whose
side lengths tend to zero. By Remark~\ref{rem:regularity-cubes-applications}, $w_{\alpha,\beta}(Q_0)\sim |Q_0|$.
Consequently, \eqref{eq:classical-local-condition} implies
\[
|Q_0|^{1/u_2-1/u_1}\lesssim1
\qquad\text{as }|Q_0|\to0,
\]
and hence $u_1\ge u_2$.

Next, apply \eqref{eq:classical-local-condition} to the
origin-centered dyadic cubes $D_k$. Since $|D_k|\sim2^{kd}$ 
and
\[
w_{\alpha,\beta}(D_k)\sim
\begin{cases}
2^{k(d+\alpha)},&k< 0,\\
2^{k(d+\beta)},&k\ge0,
\end{cases}
\]
up to constants independent of $k$, the cases $k\to-\infty$ and
$k\to+\infty$ give, respectively,
\[
\frac{d}{u_2}-\frac{d+\alpha}{u_1}\ge0
\qquad\text{and}\qquad
\frac{d}{u_2}-\frac{d+\beta}{u_1}\le0.
\]
Equivalently,
\begin{equation}\label{eq:classical-ratio}
1+\frac{\alpha}{d}
\le
\frac{u_1}{u_2}
\le
1+\frac{\beta}{d}.
\end{equation}

We now verify that the sufficient condition \eqref{eq:cond2} gives
exactly the two inequalities in \eqref{eq:classical-ratio}. In the
present setting, \eqref{eq:cond2} becomes
\begin{equation}\label{eq:classical-multiscale}
\sup_{k\in\mathbb Z}
2^{kd(1/u_2-1/p_2)}
\Bigl(
\sum_{j=-\infty}^{k}
\frac{2^{jd}}
     {w_{\alpha,\beta}(D_j)^{p_2/u_1}}
\Bigr)^{1/p_2}
<\infty.
\end{equation}
Set
\[
a_\alpha
:=
d-\frac{(d+\alpha)p_2}{u_1},
\qquad
a_\beta
:=
d-\frac{(d+\beta)p_2}{u_1},
\]
and
\[
b_\alpha
:=
\frac{d}{u_2}-\frac{d+\alpha}{u_1},
\qquad
b_\beta
:=
\frac{d}{u_2}-\frac{d+\beta}{u_1}.
\]

For $k< 0$, the sum in \eqref{eq:classical-multiscale} is comparable
to $\sum_{j=-\infty}^{k}2^{j a_\alpha}$. 
This series is finite if and only if $a_\alpha>0$, and in that case
$\sum_{j=-\infty}^{k}2^{j a_\alpha}\sim 2^{k a_\alpha}$. 
It follows that the expression in
\eqref{eq:classical-multiscale} is comparable to 
$2^{k(d(1/u_2-1/p_2)+a_\alpha/p_2)} = 2^{k b_\alpha}$.
Since $k\to-\infty$, this is uniformly bounded if and only if
$b_\alpha\ge0$. Moreover, since $p_2<u_2$, 
$a_\alpha = p_2[ d(\frac1{p_2}-\frac1{u_2})+b_\alpha]$, 
so $b_\alpha\ge0$ already implies $a_\alpha>0$. 
Thus the part of \eqref{eq:classical-multiscale} corresponding to $k\le0$ is equivalent to
\[
\frac{d}{u_2}-\frac{d+\alpha}{u_1}\ge0.
\]

For $k\ge0$, the terms with $j\le0$ give a fixed finite quantity, and
the remaining sum satisfies
\[
\sum_{j=0}^{k}2^{j a_\beta}
\sim
\begin{cases}
2^{k a_\beta},&a_\beta>0,\\
1+k,&a_\beta=0,\\
1,&a_\beta<0.
\end{cases}
\]
If $a_\beta>0$, the expression in
\eqref{eq:classical-multiscale} is comparable to 
$2^{k(d(1/u_2-1/p_2)+a_\beta/p_2)} = 2^{k b_\beta}$, 
so it is uniformly bounded as $k\to+\infty$ if and only if
$b_\beta\le0$.
If $a_\beta=0$, the corresponding expression is comparable to
$2^{kd(1/u_2-1/p_2)}(1+k)^{1/p_2}$, 
which is bounded since $p_2<u_2$. 
In this case, $b_\beta=d(\frac1{u_2}-\frac1{p_2})<0$. 
Finally, if $a_\beta<0$, the inner sum remains bounded and the outer
factor tends to zero, again since $p_2<u_2$. In this case,
$b_\beta = d(\frac1{u_2}-\frac1{p_2}) + \frac{a_\beta}{p_2} < 0$.
Therefore, in all cases, the part of
\eqref{eq:classical-multiscale} corresponding to $k\ge0$ is equivalent
to
\[
\frac{d}{u_2}-\frac{d+\beta}{u_1}\le0.
\]

Hence \eqref{eq:cond2} is equivalent to the two inequalities in
\eqref{eq:classical-ratio}. Combining this with the condition
$p_1\ge p_2$ and the necessary condition $u_1\ge u_2$, we conclude
that \eqref{eb:Mwwab-Mb} holds if and only if
\[
p_1\ge p_2,\qquad
u_1\ge u_2,\qquad
1+\frac{\alpha}{d}
\le
\frac{u_1}{u_2}
\le
1+\frac{\beta}{d}.
\]
These are precisely the conditions obtained in
\cite[Theorem~3.12]{HS17}.
\end{remark}

We finally consider the case where the target space is a Lebesgue
space.

\begin{proposition}\label{prop:Lebesgue-target-wab}
Let $w_{\alpha,\beta}$ be given by \eqref{eq:def-wab}, where $\alpha,\beta>-d$.

\begin{enumerate}[\bfseries\upshape (i)]
\item
Let $0<p_2\le p_1<\infty$, and let $\varphi_1$ be a positive measurable function. If 
\begin{equation}\label{eq:Leb-cond}
\sum_{j=-\infty}^{\infty}
\frac{|D_j|}{\varphi_1(D_j)^{p_2}}
<\infty,
\end{equation}
then
\begin{equation}\label{eb:Mfvwab-Lp}
\mathcal M^{p_1}_{\varphi_1}
(\mathbb R^d;w_{\alpha,\beta})
\hookrightarrow
L_{p_2}(\mathbb R^d).
\end{equation}

\item
Let $0<p_1,p_2<\infty$, and let 
$\varphi_1\in\mathcal G_{p_1}^{w_{\alpha,\beta}}$ satisfy \eqref{eq:def-LC}--\eqref{eq:LRC-norad}. If the embedding 
\eqref{eb:Mfvwab-Lp} holds, then
\[
p_2\le p_1\qquad\text{and}\qquad \sup_{Q\in\mathcal Q}
\frac{|Q|^{1/p_2}}{\varphi_1(Q)}
<\infty.
\]
\end{enumerate}
\end{proposition}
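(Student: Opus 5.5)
For part (i) I will adapt Step~2 of the proof of Theorem~\ref{thm:Mfwwab-suff}, but apply it to all of $\mathbb R^d$ at once instead of to a single cube $D_k$. Since $\mathbb R^d=\bigcup_{j\in\mathbb Z}E_j$ up to a null set and the annuli are pairwise disjoint, it suffices to bound each $\int_{E_j}|f|^{p_2}$. On each $E_j$ I first use H\"older's inequality with exponents $p_1/p_2\ge1$. Next I replace $dy$ by $\frac{|D_j|}{w_{\alpha,\beta}(D_j)}w_{\alpha,\beta}(y)\,dy$, which is allowed by Lemma~\ref{lem:wab-cube-annuli}. I then enlarge $E_j$ to $D_j$ and invoke \eqref{eq:dyadic-cube-source}. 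This gives, exactly as in that proof,
\[
\int_{E_j}|f|^{p_2}\lesssim \frac{|D_j|}{\varphi_1(D_j)^{p_2}}\,\|f\mid\mathcal M^{p_1}_{\varphi_1}(\mathbb R^d;w_{\alpha,\beta})\|^{p_2}.
\]
Summing over all $j\in\mathbb Z$ and using \eqref{eq:Leb-cond} yields $\|f\mid L_{p_2}(\mathbb R^d)\|\lesssim\|f\mid\mathcal M^{p_1}_{\varphi_1}(\mathbb R^d;w_{\alpha,\beta})\|$. Only positivity and measurability of $\varphi_1$ are used, because each source estimate is applied directly on the cube $D_j$.

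For part (ii), the condition $\sup_Q|Q|^{1/p_2}/\varphi_1(Q)<\infty$ follows by testing the embedding on $\chi_Q$. We have $\|\chi_Q\mid L_{p_2}\|=|Q|^{1/p_2}$, and Lemma~\ref{lem:1Qww} gives $\|\chi_Q\mid\mathcal M^{p_1}_{\varphi_1}(\mathbb R^d;w_{\alpha,\beta})\|\sim\varphi_1(Q)$ because $\varphi_1\in\mathcal G^{w_{\alpha,\beta}}_{p_1}$.

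For the condition $p_2\le p_1$ I will reduce to Theorem~\ref{thm:nece-Omega}, or more directly rerun its block-packing construction, on a regularity cube. By Remark~\ref{rem:regularity-cubes-applications} there is a bounded cube $\Omega_0$, away from the origin when $\alpha\neq0$, on which $w_{\alpha,\beta}\sim1$. Choose $Q_\ast\Subset\Omega_0$ on which \eqref{eq:def-LRC} holds with a vanishing $\psi$. If no such cube exists inside $\Omega_0$, I will instead take the $Q_\ast$ from the hypothesis and use a regularity cube inside it. Assume $p_2>p_1$ and take the functions $g_r$ from Step~2.1 of Theorem~\ref{thm:nece-Omega}, supported in $Q_\ast$.

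\textbf{Main step.} I expect the main obstacle to be proving that $g_r$ is uniformly bounded in the whole-space weighted quasi-norm, not merely in $M^{p_1}_{\varphi_1}(\Omega_0)$. For cubes $Q$ with $\ell(Q)\le R_\ast$ that meet $Q_\ast$, the intersection $Q\cap\operatorname{supp}g_r$ lies where $w\sim1$. Cases (i)--(iii) then go through after checking $w(Q)\gtrsim|Q\cap\Omega_0|$. The risk is cubes near $\partial\Omega_0$, and I will avoid these by enlarging the margin between $Q_\ast$ and $\partial\Omega_0$.

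For large cubes, $\ell(Q)>R_\ast$, I will bound $\int_Q|g_r|^{p_1}w\lesssim R_\ast^d$ and use the second condition of $\mathcal G^{w}_{p_1}$ with $Q_0=Q_\ast$. This gives $\varphi_1(Q)w(Q)^{-1/p_1}\lesssim\varphi_1(Q_\ast)w(Q_\ast)^{-1/p_1}$, which is a constant.

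On the target side, $\|g_r\mid L_{p_2}\|^{p_2}=\#J_r\,r^d\psi(r)^{-p_2}\sim R_\ast^d\psi(r)^{p_1-p_2}\to\infty$ by \eqref{eq:LRC-norad}. This contradicts the embedding, so $p_2\le p_1$.
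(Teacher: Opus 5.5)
Your proof is correct and follows the paper's plan in outline. Part (i) is exactly the paper's argument: the annular estimate from Step~2 of Theorem~\ref{thm:Mfwwab-suff}, summed over all $j\in\mathbb Z$. The condition $\sup_Q|Q|^{1/p_2}/\varphi_1(Q)<\infty$ comes, as in the paper, from testing on $\chi_Q$ with Lemma~\ref{lem:1Qww}; the paper only repackages this by writing $L_{p_2}=M^{p_2}_{\varphi_2}$ with $\varphi_2(Q)=|Q|^{1/p_2}$ and invoking Theorem~\ref{thm:Mfwwab-nece}.

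You genuinely differ in how you obtain $p_2\le p_1$. The paper only cites Remark~\ref{rem:regularity-cubes-applications} and Theorem~\ref{thm:nece-Omega}. That theorem concerns the unweighted embedding $M^{p_1}_{\varphi_1}(\Omega)\hookrightarrow M^{p_2}_{\varphi_2}(\Omega)$, and this does not follow from the whole-space weighted embedding as a black box. The zero extension of a general element of $M^{p_1}_{\varphi_1}(\Omega_0)$ need not be controlled in $\mathcal M^{p_1}_{\varphi_1}(\mathbb R^d;w_{\alpha,\beta})$: cubes may meet $\partial\Omega_0$, and large cubes are normalized by $w(Q)$ rather than $|Q|$.

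You avoid this by rerunning the block-packing only for the specific functions $g_r$, whose supports stay deep inside the regularity cube. Small cubes then lie in $\Omega_0$, where $w(Q)\sim|Q|$, so Cases (i)--(iii) apply verbatim. Those cases use only the almost-increase of $\varphi_1$ in $r$, \eqref{eq:def-LC}, \eqref{eq:def-LRC} and $\psi\in\mathcal G_{p_1}$. For $\ell(Q)>R_\ast$, the second condition of $\mathcal G^{w_{\alpha,\beta}}_{p_1}$ with $Q_0=Q_\ast$ replaces the unweighted almost-decrease of $t^{-d/p_1}\varphi_1$ used in Case (iv). That almost-decrease is not available for $\varphi_1\in\mathcal G^{w_{\alpha,\beta}}_{p_1}$ in general. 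Your route is slightly longer, but it supplies the transfer step the paper leaves implicit.

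Two small clean-ups:
\begin{itemize}
\item Drop your first option for choosing $Q_\ast$, since the hypotheses do not guarantee it, and go straight to the fallback. Take the cube from the hypothesis and choose inside it a regularity cube $\Omega_0$; any subcube whose closure avoids the origin will do. Let $Q_\ast$ be concentric with $\Omega_0$ and satisfy $\ell(Q_\ast)=R_\ast\le\ell(\Omega_0)/3$. Such subcubes inherit \eqref{eq:def-LRC} with the same $\psi$.
\item With this choice, any cube $Q$ with $\ell(Q)\le R_\ast$ meeting $\operatorname{supp} g_r\subset Q(x_\ast,R_\ast/2+r)$ lies in $Q(x_\ast,3R_\ast)\subset\Omega_0$ once $r<R_\ast/2$. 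So what you actually need and use is $w(Q)\sim|Q|$, not merely $w(Q)\gtrsim|Q\cap\Omega_0|$.
\end{itemize}
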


\begin{proof}
We first prove part~(i). Let $f\in
\mathcal M^{p_1}_{\varphi_1}
(\mathbb R^d;w_{\alpha,\beta})$. 
Using the cube annuli $E_j=D_j\setminus D_{j-1}$ and the same estimate
as in the proof of Theorem~\ref{thm:Mfwwab-suff}, we obtain
\[
\Bigl(
\int_{E_j}|f(y)|^{p_2}\,dy
\Bigr)^{1/p_2}
\lesssim
\frac{|D_j|^{1/p_2}}{\varphi_1(D_j)}
\|f\mid
\mathcal M^{p_1}_{\varphi_1}
(\mathbb R^d;w_{\alpha,\beta})\|.
\]
Therefore,
\[
\begin{aligned}
\int_{\mathbb R^d}|f(y)|^{p_2}\,dy
=
\sum_{j=-\infty}^{\infty}
\int_{E_j}|f(y)|^{p_2}\,dy
\lesssim
\|f\mid
\mathcal M^{p_1}_{\varphi_1}
(\mathbb R^d;w_{\alpha,\beta})\|^{p_2}
\sum_{j=-\infty}^{\infty}
\frac{|D_j|}{\varphi_1(D_j)^{p_2}}.
\end{aligned}
\]
Condition \eqref{eq:Leb-cond} now proves
\eqref{eb:Mfvwab-Lp}.

To prove part~(ii), assume that \eqref{eb:Mfvwab-Lp} holds and define $\varphi_2(Q):=|Q|^{1/p_2}$,
$Q\in\mathcal Q$. 
For all $f\in L_{p_2}(\mathbb R^d)$,
\[
\begin{aligned}
\|f\mid M^{p_2}_{\varphi_2}(\mathbb R^d)\|
=
\sup_{Q\in\mathcal Q}
|Q|^{1/p_2}
\Bigl(
\frac{1}{|Q|}
\int_Q|f(x)|^{p_2}\,dx
\Bigr)^{1/p_2}
=
\|f\mid L_{p_2}(\mathbb R^d)\|.
\end{aligned}
\]
Hence the assumed embedding can be written as
\[
\mathcal M^{p_1}_{\varphi_1}
(\mathbb R^d;w_{\alpha,\beta})
\hookrightarrow
M^{p_2}_{\varphi_2}(\mathbb R^d).
\]
The conclusion now follows from Theorem~\ref{thm:Mfwwab-nece}.
\end{proof}

\subsection{Embeddings with general Muckenhoupt weights}\label{ssec:KS-w}

In this subsection we consider embeddings for general Muckenhoupt weights.
Some of the arguments are inspired by the techniques used in \cite[Section~4]{HS17}.

\begin{theorem}\label{thm:wMfa-Mfb} 
Let $0<p_1,p_2<\infty$, $\varphi_1\in\mathcal G_{p_1}^w$, $\varphi_2\in\mathcal G_{p_2}$ and  $w\in\mathcal A_\infty$. 
\begin{enumerate}[\bfseries\upshape (i)] 
\item  If 
\begin{equation}\label{eb:wMfa-Mfb} 
\mathcal M^{p_1}_{\varphi_1}(\mathbb R^d;w) \hookrightarrow M^{p_2}_{\varphi_2}(\mathbb R^d), \end{equation} 
then
\begin{equation}\label{eq:radio-wMfab} 
\sup_{Q\in\mathcal Q} \frac{\varphi_2(Q)}{\varphi_1(Q)} <\infty. 
\end{equation} 
If, in addition, the hypotheses of Proposition~\ref{pro:regularity-cubes} and 
Theorem~\ref{thm:nece-Omega} are satisfied, then $p_2\le p_1$.

\item Assume that $p_2\le p_1$. If $w\in\mathcal A_{p_1/p_2}$ and \eqref{eq:radio-wMfab} holds, then the embedding \eqref{eb:wMfa-Mfb} holds. 
\end{enumerate} 
\end{theorem}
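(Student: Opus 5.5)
For part~(i), I would first test the embedding \eqref{eb:wMfa-Mfb} on characteristic functions of cubes. For $Q\in\mathcal Q$, Lemma~\ref{lem:1Qww} (applicable since $\varphi_1\in\mathcal G^w_{p_1}$) gives $\|\chi_Q\mid\mathcal M^{p_1}_{\varphi_1}(\mathbb R^d;w)\|\sim\varphi_1(Q)$. On the target side, the lower bound is immediate: taking the cube $Q$ itself in the supremum defining the $M^{p_2}_{\varphi_2}$-quasi-norm gives $\|\chi_Q\mid M^{p_2}_{\varphi_2}(\mathbb R^d)\|\ge\varphi_2(Q)$. Combining the two estimates yields $\varphi_2(Q)\lesssim\varphi_1(Q)$ uniformly in $Q$, which is \eqref{eq:radio-wMfab}. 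Only the lower bound is needed here, so neither condition \eqref{eq:def-C} nor any upper estimate for $\chi_Q$ in the target space is required.

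For the necessity of $p_2\le p_1$, I would localize to a regularity cube. By Proposition~\ref{pro:regularity-cubes}, there is a cube $\widetilde Q$ on which $c\le w\le C$. Set $\Omega:=\widetilde Q$, a bounded domain. Take $f$ supported in a smaller cube $Q_\ast\Subset\Omega$, extended by zero; the test functions $g_r$ from Step~2 of Theorem~\ref{thm:nece-Omega} are of this type. On cubes of side $\lesssim\ell(\widetilde Q)$ that meet the support, we have $w(Q\cap\Omega)\sim|Q\cap\Omega|$. The aim is to bound $\|g_r\mid\mathcal M^{p_1}_{\varphi_1}(\mathbb R^d;w)\|$ uniformly in $r$.

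The main obstacle is controlling large cubes, or cubes sticking out of $\widetilde Q$, where $w(Q)$ is not comparable to $|Q|$. For $\ell(Q)\gtrsim\ell(Q_\ast)$, I would use the second condition in $\mathcal G^w_{p_1}$ with $Q_0=Q_\ast$:
\[
\varphi_1(Q)w(Q)^{-1/p_1}\Bigl(\int_Q|g_r|^{p_1}w\Bigr)^{1/p_1}\lesssim \varphi_1(Q_\ast)w(Q_\ast)^{-1/p_1}\|g_r\|_{L^{p_1}(w)}\lesssim 1,
\]
because $\|g_r\|_{L^{p_1}}^{p_1}\sim R_\ast^d$ exactly as in Case~(iv). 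For small cubes meeting $Q_\ast$, I would choose $Q_\ast$ so that any such cube lies inside $\widetilde Q$. There $w\sim 1$, so the quantity equals the unweighted one up to constants. I would then apply the local radial comparability \eqref{eq:def-LRC} of $\varphi_1$ on $Q_\ast$ with $\psi\to0$, and rerun Cases~(i)--(iii) verbatim.

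On the target side, Step~2.3 of Theorem~\ref{thm:nece-Omega} gives $\|g_r\mid M^{p_2}_{\varphi_2}(\mathbb R^d)\|\gtrsim\varphi_2(Q_\ast)\psi(r)^{p_1/p_2-1}$. If $p_2>p_1$, this tends to infinity as $r\to0^+$, which contradicts the embedding.

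For part~(ii), I would fix $Q$ and apply Hölder's inequality with exponent $p_1/p_2\ge1$. Writing $|f|^{p_2}=|f|^{p_2}w^{p_2/p_1}w^{-p_2/p_1}$ gives
\[
\int_Q|f|^{p_2}\le\Bigl(\int_Q|f|^{p_1}w\Bigr)^{p_2/p_1}\Bigl(\int_Q w^{-\frac{p_2}{p_1-p_2}}\Bigr)^{1-p_2/p_1}.
\]
The $\mathcal A_{p_1/p_2}$ condition, with $q=p_1/p_2$ so that $1/(q-1)=p_2/(p_1-p_2)$, bounds the last factor by $\lesssim|Q|\,(|Q|/w(Q))^{p_2/p_1}$. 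Balls and cubes are interchangeable here by doubling. It follows that
\[
\Bigl(\frac1{|Q|}\int_Q|f|^{p_2}\Bigr)^{1/p_2}\lesssim w(Q)^{-1/p_1}\Bigl(\int_Q|f|^{p_1}w\Bigr)^{1/p_1}\le\varphi_1(Q)^{-1}\|f\mid\mathcal M^{p_1}_{\varphi_1}(\mathbb R^d;w)\|.
\]
Multiplying by $\varphi_2(Q)$ and using \eqref{eq:radio-wMfab} gives the embedding. The case $p_1=p_2$ requires $w\in\mathcal A_1$ and is handled directly: by \eqref{eq:pro-A1}, $w\gtrsim w(Q)/|Q|$ a.e. on $Q$, which gives the same bound.
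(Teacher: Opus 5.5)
Your proposal is correct and follows the paper's proof. Characteristic functions with Lemma~\ref{lem:1Qww} give \eqref{eq:radio-wMfab}. The block-packing functions $g_r$ of Theorem~\ref{thm:nece-Omega}, placed inside a regularity cube, give $p_2\le p_1$; your direct treatment of the zero-extended $g_r$, with large cubes handled by the second condition in $\mathcal G^w_{p_1}$, spells out what the paper compresses into ``applying Theorem~\ref{thm:nece-Omega} locally on $Q_0$'' and avoids passing through an embedding of domain spaces on $Q_0$. Part (ii) is inequality \eqref{eq:Ap1p2}, which you rederive from H\"older's inequality and the $\mathcal A_{p_1/p_2}$ condition.

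One slip in (ii): the factor $\bigl(\int_Q w^{-p_2/(p_1-p_2)}\bigr)^{1-p_2/p_1}$ is bounded by $|Q|^{1-p_2/p_1}(|Q|/w(Q))^{p_2/p_1}=|Q|\,w(Q)^{-p_2/p_1}$, not by $|Q|\,(|Q|/w(Q))^{p_2/p_1}$. It is this corrected bound that yields your final displayed inequality.
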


\begin{proof}
\emph{Step }1. We first prove (i). Since $w\in\mathcal A_\infty$, Proposition~\ref{pro:regularity-cubes}
implies that there exists a cube $Q_0\in\mathcal Q$ on which $w$ is equivalent
to a positive constant. Thus
\[
\|f\mid \mathcal M^{p_1}_{\varphi_1}(Q_0;w)\|
\sim
\|f\mid \mathcal M^{p_1}_{\varphi_1}(Q_0)\|.
\]
Therefore, applying Theorem~\ref{thm:nece-Omega} locally on $Q_0$, we obtain $p_1 \ge p_2$.

Next, testing the embedding \eqref{eb:wMfa-Mfb} on characteristic functions and
using Lemma~\ref{lem:1Qww}, we obtain
\[
\varphi_2(Q)\lesssim \varphi_1(Q),
\qquad Q\in\mathcal Q,
\]
and hence \eqref{eq:radio-wMfab} follows.

\emph{Step }2. We now prove (ii). Since $w\in\mathcal A_{p_1/p_2}$, there exists a constant
$C>0$ such that, for all cubes $Q$ and all measurable functions $f$,
\begin{equation}\label{eq:Ap1p2}
    |Q|^{-1/p_2}
    \Bigl(\int_Q |f(x)|^{p_2}\,dx\Bigr)^{1/p_2}
    \le
    C\,w(Q)^{-1/p_1}
    \Bigl(\int_Q |f(x)|^{p_1}w(x)\,dx\Bigr)^{1/p_1}.
\end{equation}
cf. \cite[pp.~195--197]{Ste93}.
Multiplying \eqref{eq:Ap1p2} by $\varphi_2(Q)$ and using
\eqref{eq:radio-wMfab}, we obtain
\begin{align*}
\varphi_2(Q)
\Bigl(\frac1{|Q|}\int_Q |f(x)|^{p_2}\,dx\Bigr)^{1/p_2}
&\lesssim
\varphi_2(Q)
\Bigl(\frac1{w(Q)}\int_Q |f(x)|^{p_1}w(x)\,dx\Bigr)^{1/p_1}
\\
&\le
\frac{\varphi_2(Q)}{\varphi_1(Q)}
\|f\mid \mathcal M^{p_1}_{\varphi_1}(\mathbb R^d;w)\|
\\
&\lesssim
\|f\mid \mathcal M^{p_1}_{\varphi_1}(\mathbb R^d;w)\|.
\end{align*}
Taking the supremum over all cubes $Q$ gives \eqref{eb:wMfa-Mfb}.
\end{proof}

\begin{remark}\label{rem:w-in-Ap1/p2}
    Note that if we assume that $w_{\alpha,\beta}\in \mathcal A_{p_1/p_2}$, then, by Theorem \ref{thm:wMfa-Mfb}, the embedding \eqref{eb:Mfwwab-Mfb} holds if and only if $p_2\le p_1$ and $\varphi_2(Q)\lesssim \varphi_1(Q)$ for all $Q\in\mathcal Q$.
    Thus the gap between Theorems \ref{thm:Mfwwab-suff} and \ref{thm:Mfwwab-nece} disappears.
\end{remark}

\begin{remark}\label{rem:wMfa-Mfb}
Let $0<p_2<p_1<\infty$. Assume that
\begin{equation}\label{eq:add-wMfa-Mfb}
\sup_{Q\in\mathcal Q}
\frac{\varphi_2(Q)}{\varphi_1(Q)}
w(Q)^{1/p_1}
|Q|^{-1/p_2}
\Bigl(
\int_Q w(x)^{-\frac{p_2}{p_1-p_2}}\,dx
\Bigr)^{\frac1{p_2}-\frac1{p_1}}
<\infty.
\end{equation}
Then the embedding \eqref{eb:wMfa-Mfb} holds.
Indeed, by H\"older's inequality,
\[
\Bigl(\int_Q |f(x)|^{p_2}\,dx\Bigr)^{1/p_2}
\le
\Bigl(\int_Q |f(x)|^{p_1}w(x)\,dx\Bigr)^{1/p_1}
\Bigl(
\int_Q w(x)^{-\frac{p_2}{p_1-p_2}}\,dx
\Bigr)^{\frac1{p_2}-\frac1{p_1}}.
\]
Multiplying this estimate by
$\varphi_2(Q)|Q|^{-1/p_2}$ and using
\eqref{eq:add-wMfa-Mfb} proves the assertion.

This sufficient condition does not require $w\in\mathcal A_\infty$.
It applies to every positive locally integrable weight for which the
quantity in \eqref{eq:add-wMfa-Mfb} is finite.
\end{remark}

Now we have the non-radial counterpart of \eqref{eb:Mfa-Mfb}.

\begin{corollary}\label{cor:Mfab-nr}
Let $0<p_i<\infty$ and $\varphi_i\in\mathcal G_{p_i}$,
$i=1,2$. Assume that $\varphi_1$ satisfies \eqref{eq:def-LC}--\eqref{eq:LRC-norad}. Then
\[
M^{p_1}_{\varphi_1}(\mathbb R^d)
\hookrightarrow
M^{p_2}_{\varphi_2}(\mathbb R^d)
\]
if and only if
\[
p_2\le p_1
\quad\text{and}\quad
\sup_{Q\in\mathcal Q}
\frac{\varphi_2(Q)}{\varphi_1(Q)}
<\infty.
\]
\end{corollary}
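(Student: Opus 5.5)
The plan is to treat the corollary as the case $w\equiv1$ of Theorem~\ref{thm:wMfa-Mfb}, taking from Section~\ref{sec:Mf-Omega} only what concerns the exponents. The weight $w\equiv1$ lies in $\mathcal A_1\subset\mathcal A_\infty$, and $\mathcal M^{p_1}_{\varphi_1}(\mathbb R^d;1)=M^{p_1}_{\varphi_1}(\mathbb R^d)$ with identical quasi-norms, since $w(Q)=|Q|$. I also need $\varphi_1\in\mathcal G^1_{p_1}$, the cube-based version of $\mathcal G_{p_1}$. This follows from $\varphi_1\in\mathcal G_{p_1}$ together with \eqref{eq:def-LC}. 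If $Q\cap Q_0\neq\emptyset$ and $\ell(Q)\le\ell(Q_0)$, then $|c(Q)-c(Q_0)|\lesssim\ell(Q_0)$, and almost monotonicity combined with \eqref{eq:def-LC} at scale $\ell(Q_0)$ gives $\varphi_1(Q)\lesssim\varphi_1(c(Q),\ell(Q_0))\lesssim\varphi_1(Q_0)$. If instead $\ell(Q)\ge\ell(Q_0)$, I apply \eqref{eq:def-LC} at scale $\ell(Q)$ and then use that $r\mapsto r^{-d/p_1}\varphi_1(c(Q_0),r)$ is almost decreasing. This is the same computation as in Lemma~\ref{lem:1Q-Omega}. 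One caveat is that \eqref{eq:def-LC} was formulated with $r\le\operatorname{diam}(\Omega)$; on $\mathbb R^d$ I read it for all $r>0$, which is presumably what the hypothesis intends.

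For sufficiency, assume $p_2\le p_1$ and \eqref{eq:radio-wMfab}. Because $w\equiv1\in\mathcal A_{p_1/p_2}$ (for $p_1=p_2$ this is $\mathcal A_1$), Theorem~\ref{thm:wMfa-Mfb}(ii) gives the embedding. More directly, Hölder's inequality on each cube gives $(|Q|^{-1}\int_Q|f|^{p_2})^{1/p_2}\le(|Q|^{-1}\int_Q|f|^{p_1})^{1/p_1}$, and I then multiply by $\varphi_2(Q)\lesssim\varphi_1(Q)$.

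For necessity, I first obtain $\varphi_2(Q)\lesssim\varphi_1(Q)$ by testing on $\chi_Q$. Lemma~\ref{lem:1Qww} with $w\equiv1$ applies to $\varphi_1$, and it applies to $\varphi_2$ as well provided $\varphi_2\in\mathcal G^1_{p_2}$. If $\varphi_2$ is not assumed to satisfy \eqref{eq:def-LC}, I only need the trivial lower bound $\|\chi_Q\mid M^{p_2}_{\varphi_2}\|\ge\varphi_2(Q)$, obtained by testing on $Q$ itself, together with the upper bound $\|\chi_Q\mid M^{p_1}_{\varphi_1}\|\lesssim\varphi_1(Q)$. So only $\varphi_1$ needs regularity here.

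For $p_2\le p_1$, I rerun the block-packing argument of Theorem~\ref{thm:nece-Omega}, Step~2, on $\mathbb R^d$. Take the cube $Q_\ast$ from \eqref{eq:def-LRC} and the functions $g_r$ built there, supported in $Q_\ast$. The four-case estimate showing $\|g_r\mid M^{p_1}_{\varphi_1}\|\lesssim1$ uses only \eqref{eq:def-LC}, \eqref{eq:def-LRC}, $\varphi_1\in\mathcal G_{p_1}$, and the fact that every cube meeting $\operatorname{supp}g_r$ has its center within distance $\lesssim t$ of some $x_j$. None of these steps needs the cubes to lie inside a bounded domain. Case~(iv) in particular now covers all $t>R_\ast$, and it goes through unchanged because $g_r\in L^{p_1}$ with $\|g_r\|_{p_1}^{p_1}\sim R_\ast^d$. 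On the target side I test on $Q_\ast$, which gives $\|g_r\mid M^{p_2}_{\varphi_2}\|\gtrsim\varphi_2(x_\ast,R_\ast)\psi(r)^{p_1/p_2-1}\to\infty$ when $p_2>p_1$, by \eqref{eq:LRC-norad}. This contradicts the embedding.

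The main obstacle is this transfer of Theorem~\ref{thm:nece-Omega} to the whole space. The delicate points are that Cases~(i)--(iii) only invoke the comparability hypotheses at points near $Q_\ast$ and at scales up to $R_\ast$, and that the extra large-scale cubes on $\mathbb R^d$ are handled by the almost-decreasing property exactly as in Case~(iv). An alternative is to restrict to a bounded domain $\Omega\Supset Q_\ast$ and use $\|f\mid M_{\varphi}^p(\Omega)\|\le\|f\mid M_\varphi^p(\mathbb R^d)\|$ together with extension by zero. However, that requires bounding the whole-space norm of $g_r$ anyway, so the direct rerun is cleaner.
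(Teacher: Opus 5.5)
Your proposal is correct and follows essentially the paper's route. The paper obtains the corollary as the case $w\equiv1$ of Theorem~\ref{thm:wMfa-Mfb}: sufficiency by H\"older, $\varphi_2\lesssim\varphi_1$ by testing on $\chi_Q$, and $p_2\le p_1$ from the block-packing construction of Theorem~\ref{thm:nece-Omega}. Your choice to rerun that construction directly on $\mathbb R^d$, with \eqref{eq:def-LC} read for all $r>0$, is the cleaner way to justify the last step: the paper's appeal to Theorem~\ref{thm:nece-Omega} ``locally on a cube'' tacitly needs the whole-space embedding to induce an embedding of the domain spaces, which is immediate for the compactly supported test functions $g_r$ but not automatic in general.
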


Next we consider the classical Morrey settings.

\begin{example}\label{ex:classical}
Let $0<p_i\le u_i<\infty$, $i=1,2$, and
\[
\varphi_1(Q)=w(Q)^{1/u_1},
\qquad
\varphi_2(Q)=|Q|^{1/u_2},
\qquad Q\in\mathcal Q.
\]
Then \eqref{eb:wMfa-Mfb} becomes
\[
\mathcal M_{p_1}^{u_1}(\mathbb R^d;w)
\hookrightarrow
M_{p_2}^{u_2}(\mathbb R^d).
\]
Condition \eqref{eq:radio-wMfab} becomes
\[
\sup_{Q\in\mathcal Q}
|Q|^{1/u_2}w(Q)^{-1/u_1}
<\infty.
\]
Thus Theorem \ref{thm:wMfa-Mfb} recovers Proposition~4.1 in \cite{HS17}.
\end{example}

Building on the preceding discussion, we obtain the following two-weight
sufficient condition.

\begin{corollary}\label{cor:Mfww-two-w}
Let $0<p_2<p_1<\infty$, $\varphi_i\in\mathcal G_{p_i}^{w_i}$ and $w_i\in\mathcal A_\infty$, $i=1,2$.
Assume that, for some $\gamma>1$ with conjugate exponent $\gamma'$, one has
\begin{align}\label{eq:two-weight-suff}
\sup_{Q\in\mathcal Q}
&\frac{\varphi_2(Q)w_1(Q)^{1/p_1}}
{\varphi_1(Q)w_2(Q)^{1/p_2}}
\Bigl(
\int_Q w_1(x)^{-\gamma\frac{p_2}{p_1-p_2}}\,dx
\Bigr)^{\frac1\gamma(\frac1{p_2}-\frac1{p_1})}
\nonumber\\
&\hspace{35mm}\times
\Bigl(
\int_Q w_2(x)^{\gamma'\frac{p_1}{p_1-p_2}}\,dx
\Bigr)^{\frac1{\gamma'}(\frac1{p_2}-\frac1{p_1})}
<\infty.
\end{align}
Then the embedding
\begin{equation*}
    \mathcal M^{p_1}_{\varphi_1}(\mathbb R^d;w_1)
    \hookrightarrow
    \mathcal M^{p_2}_{\varphi_2}(\mathbb R^d;w_2)
\end{equation*}
is continuous.
\end{corollary}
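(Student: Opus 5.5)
Fix a cube $Q$ and $f\in\mathcal M^{p_1}_{\varphi_1}(\mathbb R^d;w_1)$. The aim is to bound the local target quantity
\[
\varphi_2(Q)w_2(Q)^{-1/p_2}\Bigl(\int_Q|f|^{p_2}w_2\Bigr)^{1/p_2}
\]
by the source norm times the supremum in \eqref{eq:two-weight-suff}. The argument copies Remark~\ref{rem:wMfa-Mfb}: split off $w_1$ with H\"older's inequality, then handle the leftover weight factor with a second H\"older inequality with exponents $\gamma,\gamma'$.

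First write $|f|^{p_2}w_2=(|f|^{p_2}w_1^{p_2/p_1})\cdot(w_1^{-p_2/p_1}w_2)$. Apply H\"older with exponents $p_1/p_2$ and $s:=p_1/(p_1-p_2)$, which is legitimate since $p_2<p_1$. This gives
\[
\int_Q|f|^{p_2}w_2\le\Bigl(\int_Q|f|^{p_1}w_1\Bigr)^{p_2/p_1}\Bigl(\int_Q w_1^{-\frac{p_2}{p_1-p_2}}w_2^{\frac{p_1}{p_1-p_2}}\Bigr)^{\frac{p_1-p_2}{p_1}}.
\]
Next apply H\"older with $\gamma,\gamma'$ to the product $w_1^{-p_2/(p_1-p_2)}\cdot w_2^{p_1/(p_1-p_2)}$ in the last integral. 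This produces exactly the two integrals appearing in \eqref{eq:two-weight-suff}, with powers $1/\gamma$ and $1/\gamma'$. Raising everything to the power $1/p_2$ turns the outer exponent $(p_1-p_2)/(p_1p_2)$ into $\frac1{p_2}-\frac1{p_1}$, which matches the statement.

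Then use the definition of the source norm on the same cube: $(\int_Q|f|^{p_1}w_1)^{1/p_1}\le w_1(Q)^{1/p_1}\varphi_1(Q)^{-1}\|f\mid\mathcal M^{p_1}_{\varphi_1}(\mathbb R^d;w_1)\|$. Multiply by $\varphi_2(Q)w_2(Q)^{-1/p_2}$. The prefactor that results is precisely the expression inside the supremum in \eqref{eq:two-weight-suff}, so it is bounded uniformly in $Q$. Taking the supremum over $Q$ gives the embedding, and the inclusion comes for free since the target norm is finite.

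The only delicate step is the exponent bookkeeping. One must check that $(p_1-p_2)/p_1$ combined with $1/p_2$ gives $\frac1{p_2}-\frac1{p_1}$, and that the $\gamma$-splitting sends the exponents where the statement puts them. Note also that the hypotheses $\varphi_i\in\mathcal G^{w_i}_{p_i}$ and $w_i\in\mathcal A_\infty$ are never used; the argument is purely cube-by-cube, as in Remark~\ref{rem:wMfa-Mfb}. One should also make sure the integrals in \eqref{eq:two-weight-suff} are finite on every cube, so that H\"older does not produce an $\infty\cdot0$ issue. This follows from the finiteness of the supremum together with positivity of the other factors.
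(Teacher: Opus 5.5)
Your proposal is correct and follows the paper's proof essentially step for step. You apply H\"older with exponents $p_1/p_2$ and $p_1/(p_1-p_2)$ to split off $w_1$, then H\"older with $\gamma,\gamma'$ on the leftover weight integral, and finally the source quasi-norm on the same cube; as you note, the paper's proof also never uses the hypotheses $\varphi_i\in\mathcal G^{w_i}_{p_i}$ and $w_i\in\mathcal A_\infty$.
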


\begin{proof}
For a cube $Q$, by H\"older's inequality,
\[
\Bigl(\int_Q |f(x)|^{p_2}w_2(x)\,dx\Bigr)^{1/p_2}
\le
\Bigl(\int_Q |f(x)|^{p_1}w_1(x)\,dx\Bigr)^{1/p_1}
\Bigl(
\int_Q
w_1(x)^{-\frac{p_2}{p_1-p_2}}
w_2(x)^{\frac{p_1}{p_1-p_2}}
\,dx
\Bigr)^{\frac1{p_2}-\frac1{p_1}}.
\]
Applying H\"older's inequality once more to the last integral, with exponents
$\gamma$ and $\gamma'$, gives the two factors in
\eqref{eq:two-weight-suff}. Multiplying by $\varphi_2(Q)w_2(Q)^{-1/p_2}$ 
and using \eqref{eq:two-weight-suff}, we obtain
\[
\varphi_2(Q)
\Bigl(
\frac1{w_2(Q)}
\int_Q |f(x)|^{p_2}w_2(x)\,dx
\Bigr)^{1/p_2}
\lesssim
\|f\mid \mathcal M^{p_1}_{\varphi_1}(\mathbb R^d;w_1)\|.
\]
Taking the supremum over all cubes $Q$ proves the assertion.
\end{proof}

\begin{remark}\label{rem:Mfww-two-w}
Corollary~\ref{cor:Mfww-two-w} is the generalized Morrey-function analogue of the
two-weight sufficient conditions discussed in \cite[Remark~4.5]{HS17}.
In the classical case $\varphi_i(Q)\sim w_i(Q)^{1/u_i}$ with $p_i\le u_i$, $i=1,2$, 
condition \eqref{eq:two-weight-suff} reduces to the corresponding weighted
Morrey scale condition in \cite[p.~1082]{HS17}.
\end{remark}

We next consider the special case where the same weight appears on both sides.

\begin{corollary}\label{cor:same-weight}
Let $0<p_i<\infty$ and
$\varphi_i\in\mathcal G_{p_i}^{w}$, $i=1,2$. Let
$w\in\mathcal A_\infty$.

If
\[
p_2\le p_1
\qquad\text{and}\qquad
\sup_{Q\in\mathcal Q}
\frac{\varphi_2(Q)}{\varphi_1(Q)}
<\infty,
\]
then
\[
\mathcal M^{p_1}_{\varphi_1}(\mathbb R^d;w)
\hookrightarrow
\mathcal M^{p_2}_{\varphi_2}(\mathbb R^d;w).
\]

Conversely, if this embedding holds, then $\sup_{Q\in\mathcal Q}\frac{\varphi_2(Q)}{\varphi_1(Q)}<\infty$. 
If, in addition, the hypotheses of Proposition~\ref{pro:regularity-cubes} and 
Theorem~\ref{thm:nece-Omega} are satisfied, then $p_2\le p_1$.
\end{corollary}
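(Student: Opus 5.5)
The sufficiency part follows from H\"older's inequality applied cube by cube, with the same weight on both sides. Fix $Q\in\mathcal Q$ and $f\in\mathcal M^{p_1}_{\varphi_1}(\mathbb R^d;w)$. Since $p_2\le p_1$, H\"older's inequality with respect to the probability measure $w\,dx/w(Q)$ on $Q$ gives
\[
\Bigl(\frac{1}{w(Q)}\int_Q|f|^{p_2}w\Bigr)^{1/p_2}
\le
\Bigl(\frac{1}{w(Q)}\int_Q|f|^{p_1}w\Bigr)^{1/p_1}.
\]
We multiply by $\varphi_2(Q)=\frac{\varphi_2(Q)}{\varphi_1(Q)}\varphi_1(Q)$ and use the assumed bound on the ratio. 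The right-hand side is then at most a constant times $\|f\mid\mathcal M^{p_1}_{\varphi_1}(\mathbb R^d;w)\|$. Taking the supremum over $Q$ yields the embedding. No $\mathcal A_p$ condition is needed here, because the normalization $w(Q)$ is the same on both sides; the weight enters only as a measure. This is the reason the condition is simpler than in Theorem~\ref{thm:wMfa-Mfb}(ii).

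For the necessity of the ratio condition, we test the embedding on $\chi_Q$ for an arbitrary cube $Q$. Since $\varphi_i\in\mathcal G^w_{p_i}$, Lemma~\ref{lem:1Qww} applies to both spaces and gives
\[
\varphi_2(Q)\sim\|\chi_Q\mid\mathcal M^{p_2}_{\varphi_2}(\mathbb R^d;w)\|\lesssim\|\chi_Q\mid\mathcal M^{p_1}_{\varphi_1}(\mathbb R^d;w)\|\sim\varphi_1(Q).
\]
The implicit constants are independent of $Q$, so the ratio condition follows.

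For $p_2\le p_1$, we follow Step~1 of the proof of Theorem~\ref{thm:wMfa-Mfb}. Proposition~\ref{pro:regularity-cubes} provides a regularity cube $\widetilde Q$ on which $c\le w\le C$ a.e. Take a smaller cube $Q_\ast\Subset\widetilde Q$ on which $\varphi_1$ is locally radially comparable, with a vanishing $\psi$. We then reuse the block-packing functions $g_r$ from Theorem~\ref{thm:nece-Omega}, supported in $Q_\ast$.

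The main work is to check that $\|g_r\mid\mathcal M^{p_1}_{\varphi_1}(\mathbb R^d;w)\|\lesssim1$ uniformly in $r$, now with the quotient $w(Q)$ and over all cubes of $\mathbb R^d$.
\begin{itemize}
\item For small cubes $Q\subset\widetilde Q$, we have $w\sim1$ and $w(Q)\sim|Q|$, so Cases (i)--(iii) of that proof transfer verbatim.
\item For large cubes $Q$ meeting $Q_\ast$, we bound $\int_Q|g_r|^{p_1}w\lesssim\int|g_r|^{p_1}\sim R_\ast^d$. We then use the second condition of $\mathcal G^w_{p_1}$ with $Q_0=Q_\ast$: $\varphi_1(Q)w(Q)^{-1/p_1}\lesssim\varphi_1(Q_\ast)w(Q_\ast)^{-1/p_1}\sim\psi(R_\ast)R_\ast^{-d/p_1}$. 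This replaces Case~(iv) and gives $I(Q)\lesssim\psi(R_\ast)$.
\end{itemize}

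On the target side, we test on $Q_\ast$. Since $w\sim1$ there, we get $\|g_r\mid\mathcal M^{p_2}_{\varphi_2}(\mathbb R^d;w)\|\gtrsim\varphi_2(Q_\ast)\psi(r)^{p_1/p_2-1}$. This tends to $\infty$ if $p_2>p_1$, which gives a contradiction.

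The step I expect to need care is this large-cube estimate and the intermediate scales where $Q$ leaves $\widetilde Q$. There I plan to combine the doubling property of $w\in\mathcal A_\infty$ with the $\mathcal G^w_{p_1}$ monotonicity instead of the almost-decreasing property of $r^{-d/p}\varphi$.
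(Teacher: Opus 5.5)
Your proposal is correct and follows the paper's proof. Sufficiency comes from H\"older's inequality with respect to $w\,dx/w(Q)$. The ratio condition comes from applying Lemma~\ref{lem:1Qww} to $\chi_Q$ in both weighted spaces. The bound $p_2\le p_1$ comes from placing the block-packing functions $g_r$ of Theorem~\ref{thm:nece-Omega} inside a regularity cube from Proposition~\ref{pro:regularity-cubes}.

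Your explicit verification that $\|g_r\mid\mathcal M^{p_1}_{\varphi_1}(\mathbb R^d;w)\|\lesssim1$ over all cubes of $\mathbb R^d$ is more careful than the paper's one-line appeal to Theorem~\ref{thm:nece-Omega}. The intermediate scales you flagged need no doubling. Cubes of side less than $R_\ast/8$ that meet $\operatorname{supp}g_r$ lie in $Q_\ast$. For larger cubes, apply the second $\mathcal G^w_{p_1}$ condition with a reference cube $Q_0\subset Q_\ast$ of side $R_\ast/8$ centred at a point of $Q\cap\operatorname{supp}g_r$; this already gives $I(Q)\lesssim\psi(R_\ast)$.
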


\begin{proof}
The sufficiency follows directly from H\"older's inequality. 
For necessity, the condition $p_1\ge p_2$ follows by applying
Theorem~\ref{thm:nece-Omega} on a regularity cube of the weight $w$, whose
existence follows from Proposition~\ref{pro:regularity-cubes}. Finally, testing the
embedding on characteristic functions and using Lemma~\ref{lem:1Qww} yields $\varphi_2(Q)\lesssim \varphi_1(Q)$, $Q\in\mathcal Q$.
\end{proof}

Finally, we focus on the smallest Muckenhoupt weight class $\mathcal A_1$.

\begin{theorem}\label{thm:wMfa-Mfb'}
Let $0<p_1,p_2<\infty$, 
$\varphi_1\in\mathcal G_{p_1}^w$, 
$\varphi_2\in\mathcal G_{p_2}$, and $w\in\mathcal A_1$.

If $p_2\le p_1$ and 
\begin{equation}\label{eq:ratio-wMfab'}
\sup_{Q\in\mathcal Q}
\frac{\varphi_2(Q)}{\varphi_1(Q)}
<\infty,
\end{equation}
then
\[
\mathcal M^{p_1}_{\varphi_1}(\mathbb R^d;w)
\hookrightarrow
M^{p_2}_{\varphi_2}(\mathbb R^d).
\]

Conversely, if this embedding holds, then \eqref{eq:ratio-wMfab'} is necessary. 
If, in addition, the hypotheses of Proposition~\ref{pro:regularity-cubes} and Theorem~\ref{thm:nece-Omega} are satisfied, then $p_2\le p_1$.
\end{theorem}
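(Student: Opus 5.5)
The plan is to reduce sufficiency to Theorem~\ref{thm:wMfa-Mfb}(ii), using the nesting of the Muckenhoupt classes, and then to obtain necessity exactly as in Theorem~\ref{thm:wMfa-Mfb}(i).

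\emph{Sufficiency.} Assume $p_2\le p_1$ and \eqref{eq:ratio-wMfab'}. Since $w\in\mathcal A_1$, the characterization in Remark~\ref{rem:Aq} gives
\[
\frac{1}{|B|}\int_B g \lesssim \frac{1}{w(B)}\int_B g\,w
\]
for nonnegative $g$. This inequality passes from balls to cubes by doubling.

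If $p_2<p_1$, then $q:=p_1/p_2>1$ and $\mathcal A_1\subset\mathcal A_q$. Hence Theorem~\ref{thm:wMfa-Mfb}(ii) applies directly.

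If $p_2=p_1=:p$, I argue directly with $g=|f|^p$, which gives
\[
\Bigl(\frac1{|Q|}\int_Q|f|^{p}\Bigr)^{1/p}\lesssim\Bigl(\frac1{w(Q)}\int_Q|f|^pw\Bigr)^{1/p}.
\]
Multiplying by $\varphi_2(Q)$ and using \eqref{eq:ratio-wMfab'} gives a bound by $\|f\mid\mathcal M^{p_1}_{\varphi_1}(\mathbb R^d;w)\|$.

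Both cases can be handled uniformly. Applying the $\mathcal A_1$ inequality to $g=|f|^{p_1}$ and then Jensen/Hölder (since $p_2\le p_1$) yields \eqref{eq:Ap1p2} with $p_1/p_2$ replaced by $1$. I will present this unified route. After that, the chain of estimates is identical to Step~2 of the proof of Theorem~\ref{thm:wMfa-Mfb}, and taking the supremum over $Q$ finishes sufficiency.

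\emph{Necessity.} Suppose the embedding holds. Since $\varphi_1\in\mathcal G^w_{p_1}$, Lemma~\ref{lem:1Qww} gives $\|\chi_Q\mid\mathcal M^{p_1}_{\varphi_1}(\mathbb R^d;w)\|\sim\varphi_1(Q)$. On the target side, $\varphi_2\in\mathcal G_{p_2}$ gives $\|\chi_Q\mid M^{p_2}_{\varphi_2}(\mathbb R^d)\|\ge\varphi_2(Q)$ directly from the definition, since one may simply take the cube $Q$ itself in the supremum. Comparing the two norms of $\chi_Q$ yields \eqref{eq:ratio-wMfab'}.

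For $p_2\le p_1$, I invoke Proposition~\ref{pro:regularity-cubes} to get a regularity cube $\widetilde Q$ on which $w\sim1$. On $\widetilde Q$, the source and target quasi-norms restricted to functions supported in a subcube become comparable to unweighted generalized Morrey quasi-norms on the bounded domain $\widetilde Q$. One subtlety: the Komori--Shirai normalization $w(Q)$ for large cubes $Q$ is not trivially comparable to $|Q|$. Since the test functions $g_r$ are supported in $\widetilde Q$, this is handled by the same argument as Step~1 of Theorem~\ref{thm:wMfa-Mfb}.

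The block-packing functions $g_r$ from Theorem~\ref{thm:nece-Omega} then contradict $p_2>p_1$. This transfer step is the only delicate point, and I will cite it as already carried out in Theorem~\ref{thm:wMfa-Mfb}(i).
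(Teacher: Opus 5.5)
Your proposal is correct and follows essentially the same route as the paper. Sufficiency uses the $\mathcal A_1$ averaging inequality applied to $|f|^{p_1}$ followed by H\"older, the ratio condition comes from testing on $\chi_Q$ via Lemma~\ref{lem:1Qww}, and $p_2\le p_1$ comes from a regularity cube combined with the block-packing argument of Theorem~\ref{thm:nece-Omega}. On the target side you use only the trivial lower bound $\|\chi_Q\mid M^{p_2}_{\varphi_2}(\mathbb R^d)\|\ge\varphi_2(Q)$, which is slightly cleaner than the paper's two-sided equivalence, since that equivalence would need an extra center-comparability assumption for non-radial $\varphi_2$.
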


\begin{proof}
We first prove sufficiency. Since $w\in\mathcal A_1$, there exists a constant
$C>0$ such that, for all cubes $Q$ and all nonnegative measurable functions 
$g$,
\begin{equation}\label{eq:pr-A1}
    \frac1{|Q|}\int_Q g(x)\,dx
    \le
    \frac{C}{w(Q)}
    \int_Q g(x)w(x)\,dx,
\end{equation}
cf. \cite[p.~197]{Ste93}. Applying the above estimate with $g=|f|^{p_1}$ we obtain
\[
\Bigl(\frac{1}{|Q|}\int_Q |f(x)|^{p_1}\,dx\Bigr)^{1/p_1}
\lesssim
\Bigl(\frac{1}{w(Q)}\int_Q |f(x)|^{p_1}w(x)\,dx\Bigr)^{1/p_1}.
\] 
Since $p_2\le p_1$, by H\"older's inequality we have
\[
\Bigl(
\frac1{|Q|}\int_Q |f(x)|^{p_2}\,dx
\Bigr)^{1/p_2}
\le
\Bigl(
\frac1{|Q|}\int_Q |f(x)|^{p_1}\,dx
\Bigr)^{1/p_1}.
\]
Combining the last two estimates, we obtain
\begin{align*}
\varphi_2(Q)
\Bigl(
\frac1{|Q|}\int_Q |f(x)|^{p_2}\,dx
\Bigr)^{1/p_2}
&\lesssim
\varphi_2(Q)
\Bigl(
\frac1{w(Q)}\int_Q |f(x)|^{p_1}w(x)\,dx
\Bigr)^{1/p_1}
\\
&\le
\frac{\varphi_2(Q)}{\varphi_1(Q)}
\|f\mid\mathcal M^{p_1}_{\varphi_1}(\mathbb R^d;w)\|
\\
&\lesssim
\|f\mid\mathcal M^{p_1}_{\varphi_1}(\mathbb R^d;w)\|.
\end{align*}
Taking the supremum over all cubes $Q$ proves the sufficiency.

We next prove necessity. By Lemma~\ref{lem:1Qww},
\[
\|\chi_Q\mid \mathcal M^{p_1}_{\varphi_1}(\mathbb R^d;w)\|
\sim
\varphi_1(Q),
\qquad
\|\chi_Q\mid M^{p_2}_{\varphi_2}(\mathbb R^d)\|
\sim
\varphi_2(Q),
\]
and hence the embedding implies
\[
\varphi_2(Q)\lesssim \varphi_1(Q),
\qquad Q\in\mathcal Q.
\]
Finally, the necessity of $p_1\ge p_2$ follows from
Proposition~\ref{pro:regularity-cubes} and Theorem~\ref{thm:nece-Omega}, applied on a
cube on which $w$ is equivalent to a positive constant.
\end{proof}

We conclude this section with two useful examples.

\begin{example}\label{ex:wMfab'}
\begin{enumerate}[\bfseries\upshape (i)]

\item
If $w\equiv1$, then $w\in\mathcal A_1$. Hence 
Theorem~\ref{thm:wMfa-Mfb'} reduces to the full unweighted
embedding criterion \eqref{eb:Mfa-Mfb}.

\item Let $0<p_i\le u_i<\infty$, $i=1,2$, and let
$\varphi_1(Q)= w(Q)^{1/u_1}$, $\varphi_2(Q)= |Q|^{1/u_2}$, $Q\in\mathcal Q$. 
Then condition \eqref{eq:ratio-wMfab'} becomes
\[
\sup_{Q\in\mathcal Q}
|Q|^{1/u_2}w(Q)^{-1/u_1}
<\infty.
\]
Thus Theorem~\ref{thm:wMfa-Mfb'} recovers 
Theorem~4.7 in \cite{HS17}.
\end{enumerate}
\end{example}

\section{Embeddings of Samko-type spaces}\label{sec:S}

In this section we study embeddings for Samko-type generalized weighted Morrey spaces $\mathfrak M_\varphi^p(\mathbb R^d;w)$, where the Morrey function $\varphi=\varphi(r)$ depends only on the scale. We first consider piecewise power weights and then general Muckenhoupt weights. The classical Samko-type weighted Morrey spaces are obtained by taking $\varphi(r)=r^{d/u}$, $0<p\le u<\infty$.

Since the Morrey functions considered in this section depend only on the scale, we can use balls to define the corresponding Morrey quasi-norms and the class $\mathcal B_{\varphi,p}$. 
The equivalence with the definitions using cubes follows from the fact that balls and cubes can contain one another up to constants, together with $\varphi\in\mathcal G_p$.

\subsection{Embeddings with piecewise power weights}\label{ssec:S-pw}

We start with the following criterion for the membership of the weight $w_{\alpha,\beta}$ in the class $\mathcal B_{\varphi,p}$.

\begin{lemma}\label{lem:wab-Bphip}
Let $0<p<\infty$, $\varphi\in\mathcal G_p$ and 
$w_{\alpha,\beta}$ be given by \eqref{eq:def-wab}, where $\alpha,\beta>-d$. Then
\[
w_{\alpha,\beta}\in\mathcal B_{\varphi,p}
\]
if and only if
\begin{equation}\label{eq:wab-in-Bphip}
\begin{cases}
\varphi(R_0)R_0^{\alpha/p}
\lesssim
\varphi(R_1)R_1^{\alpha/p},
& 0<R_0\le R_1<1,\\[1mm]
\varphi(R_0)R_0^{\alpha/p}
\lesssim
\varphi(R_1)R_1^{\beta/p},
& 0<R_0<1\le R_1,\\[1mm]
\varphi(R_0)R_0^{\beta/p}
\lesssim
\varphi(R_1)R_1^{\beta/p},
& 1\le R_0\le R_1.
\end{cases}
\end{equation}
The implicit constants are independent of $R_0$ and $R_1$.
\end{lemma}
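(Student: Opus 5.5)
The plan is to reduce membership in $\mathcal B_{\varphi,p}$ to a one-dimensional condition on origin-centered balls, using the ball formulation of $\mathcal B_{\varphi,p}$ permitted at the start of Section~\ref{sec:S}. The key quantity is the average
\[
m(x,R):=\frac{w_{\alpha,\beta}(B(x,R))}{|B(x,R)|}.
\]
For a ball $B(x,R)$ I distinguish two regimes, exactly as in the proof of Theorem~\ref{thm:Mfwwab-suff}.

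If $|x|\le 2R$, then $B(x,R)\subset B(0,3R)$ and $B(x,R)\supset$ a ball of radius comparable to $R$ at distance $\sim R$ from the origin. Hence $m(x,R)\sim \rho(R)$, where
\[
\rho(R):=
\begin{cases}
R^{\alpha}, & R<1,\\
R^{\beta}, & R\ge1.
\end{cases}
\]
This is the computation $w_{\alpha,\beta}(D_k)\sim|D_k|\rho(2^k)$ from Lemma~\ref{lem:wab-cube-annuli}. If instead $|x|>2R$, then $|y|\sim|x|$ on the ball, so $m(x,R)\sim\rho(|x|)$.

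Thus $\Phi(B(x,R))^p\sim\varphi(R)^p\rho(\max\{R,|x|\})$, and \eqref{eq:wab-in-Bphip} is precisely the statement $\varphi(R_0)\rho(R_0)^{1/p}\lesssim\varphi(R_1)\rho(R_1)^{1/p}$ for all $R_0\le R_1$, with the three lines corresponding to the position of $R_0,R_1$ relative to $1$.

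\emph{Necessity.} Take $Q_0=B(0,R_1)$ and $Q=B(0,R_0)\subset Q_0$. Then \eqref{eq:def-Bvp} gives $\varphi(R_0)\rho(R_0)^{1/p}\lesssim\varphi(R_1)\rho(R_1)^{1/p}$ directly, and this yields all three lines.

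\emph{Sufficiency.} Fix $B(z,R_1)=Q_0$ and $B(x,R_0)\subset Q_0$, so $R_0\le R_1$ and $|x-z|\le R_1$. The target inequality is
\[
\varphi(R_0)\rho(\max\{R_0,|x|\})^{1/p}\lesssim\varphi(R_1)\rho(\max\{R_1,|z|\})^{1/p}.
\]
I split into cases.

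\textbf{Case $|z|\le 2R_1$.} The right side is $\sim\varphi(R_1)\rho(R_1)^{1/p}$, and $\max\{R_0,|x|\}\le 3R_1$. Set $s:=\max\{R_0,|x|\}\in[R_0,3R_1]$. I use $\varphi(R_0)\lesssim\varphi(s)$ (almost increasing) together with the hypothesis at the pair $(s,3R_1)$. The dilation $\varphi(3R_1)\sim\varphi(R_1)$ follows from $\mathcal G_p$, and $\rho(3R_1)\sim\rho(R_1)$. Together these give the bound.

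\textbf{Case $|z|>2R_1$.} Here $|x|\sim|z|$ and $|x|\ge R_1\ge R_0$. Both sides then contain $\rho(|x|)\sim\rho(|z|)$, and only $\varphi(R_0)\lesssim\varphi(R_1)$ is needed, which holds since $\varphi$ is almost increasing.

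The main obstacle is the first case. There I must use the hypothesis between $s=\max\{R_0,|x|\}$ and $3R_1$, rather than between $R_0$ and $R_1$, while still controlling $\varphi(R_0)$ by $\varphi(s)$. When $|x|>R_0$, the left side carries $\rho(|x|)$ instead of $\rho(R_0)$, and this is what forces the argument to compare at scale $s$. I also need to check that the finitely many scales near $1$, where $\rho$ switches exponent, cause no trouble; this reduces to $\rho(cR)\sim\rho(R)$ uniformly for fixed $c$, which holds.
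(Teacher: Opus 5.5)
Your proof is correct, and it is organized differently from the paper's. The necessity step is the same: both use origin-centered balls.

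For sufficiency, the paper works out only the straddling regime $R_0<1\le R_1$. It defers $R_0\le R_1<1$ and $1\le R_0\le R_1$ to Nakamura's computation for the pure power weight (Example~\ref{ex:Bp}). In the straddling regime it uses only one-sided bounds. It proves a lower bound $\Phi(B_1)\gtrsim\varphi(R_1)R_1^{\beta/p}$ when $|a_1|\le 2R_1$. It then bounds $w_{\alpha,\beta}(B_0)/|B_0|$ from above in three cases, $|a_0|\le 2R_0$, $2R_0<|a_0|<1$ and $|a_0|\ge 1$. Further subcases on the signs of $\alpha$ and $\beta$ decide whether the hypothesis is applied at $(R_0,R_1)$ or at $(1,R_1)$.

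You instead prove the two-sided formula $\Phi(B(x,R))\sim\varphi(R)\rho(\max\{R,|x|\})^{1/p}$ for every ball. You then read \eqref{eq:wab-in-Bphip} as almost monotonicity of $g=\varphi\rho^{1/p}$ and close with one comparison at the pair $(\max\{R_0,|x|\},3R_1)$.

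What your route buys:
\begin{itemize}
\item It needs no sign distinctions and treats all three regimes uniformly.
\item It is self-contained. It covers configurations in the two ``pure'' regimes where a ball lies far from the origin, so the weight is not governed by a single power; the paper handles these only by reference to the pure power case.
\item The same formula holds for cubes, so the ball/cube reduction is not needed.
\end{itemize}

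The cost is small. The hypothesis may be applied at a pair belonging to a different line of \eqref{eq:wab-in-Bphip} than the line containing $(R_0,R_1)$. So you lose the line-by-line correspondence suggested by the paper's case split. This does not matter for the lemma as stated, since all three lines are assumed together.
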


\begin{proof}
For a ball $B=B(a,R)$, set
\[
\Phi(B)
:=
\varphi(R)
\Bigl(\frac{w_{\alpha,\beta}(B)}{|B|}\Bigr)^{1/p}.
\]
By definition, $w_{\alpha,\beta}\in\mathcal B_{\varphi,p}$ if and only if
$\Phi(B_0)\lesssim\Phi(B_1)$, $B_0\subset B_1$.

Note that the cases $0<R_0\le R_1<1$ and $1\le R_0\le R_1$
follow from the same arguments as those given in
\cite[Example~2.3]{Nak16}. 
It remains to consider the middle case $0<R_0<1\le R_1$.

\emph{Step }1. We first prove the sufficiency. 
Consider balls $B_i=B(a_i,R_i)$,  $i=0,1$, such that $B_0\subset B_1$ and $0<R_0<1\le R_1$. 

\emph{Step }1.1. We first consider the case $|a_1|>2R_1$. Since $R_1\ge1$, we have
$|x|\sim|a_1|>1$ for all $x\in B_1$.
So, $w_{\alpha,\beta}(x)\sim |a_1|^\beta$ on $B_1$ and also on $B_0$. 
Thus 
\[
\frac{w_{\alpha,\beta}(B_0)}{|B_0|}
\sim
\frac{w_{\alpha,\beta}(B_1)}{|B_1|}
\sim |a_1|^\beta.
\]
Since $\varphi\in\mathcal G_p$ is almost increasing and
$R_0\le R_1$, we obtain
\[
\Phi(B_0)\lesssim\Phi(B_1).
\]

\emph{Step }1.2. We next assume that $|a_1|\le2R_1$. In this case, $B_1$ contains a
ball $E$ of radius comparable to $R_1$ on which 
$|x|\sim R_1$. 
Therefore, $|E|\sim R_1^d$ and $w_{\alpha,\beta}(x)\sim R_1^\beta$ for $x\in E$.
It follows that
\[
w_{\alpha,\beta}(B_1)
\ge
\int_E w_{\alpha,\beta}(x)\,dx
\gtrsim
R_1^{d+\beta}.
\]
Since $|B_1|\sim R_1^d$, we conclude that
\[
\frac{w_{\alpha,\beta}(B_1)}{|B_1|}
\gtrsim R_1^\beta,
\]
and hence
\begin{equation}\label{eq:Phi-B1-lower}
\Phi(B_1)
\gtrsim
\varphi(R_1)R_1^{\beta/p}.
\end{equation}

\emph{Step }1.3. For the smaller ball $B_0$, standard estimates for the piecewise power weight give
\begin{equation}\label{eq:wab-B0-upper}
\frac{w_{\alpha,\beta}(B_0)}{|B_0|}
\lesssim
\begin{cases}
R_0^\alpha,
& |a_0|\le2R_0,\\[1mm]
|a_0|^\alpha,
& 2R_0<|a_0|<1,\\[1mm]
|a_0|^\beta,
& |a_0|\ge1.
\end{cases}
\end{equation}
We treat these three cases separately.

Case (i): $|a_0|\le2R_0$. By \eqref{eq:wab-B0-upper}, the second
condition in \eqref{eq:wab-in-Bphip} and
\eqref{eq:Phi-B1-lower}, 
\[
\Phi(B_0)
\lesssim
\varphi(R_0)R_0^{\alpha/p}
\lesssim
\varphi(R_1)R_1^{\beta/p}
\lesssim
\Phi(B_1).
\]

Case (ii): $2R_0<|a_0|<1$.
If $\alpha<0$, then $|a_0|^\alpha\lesssim R_0^\alpha$, 
and therefore
\[
\Phi(B_0)
\lesssim
\varphi(R_0)R_0^{\alpha/p}
\lesssim
\varphi(R_1)R_1^{\beta/p}
\lesssim
\Phi(B_1).
\]
If $\alpha\ge0$, then $|a_0|^\alpha\le1$. Since $\varphi$ is almost
increasing and $R_0<1$, the third condition in
\eqref{eq:wab-in-Bphip}, applied with the radii $1$ and $R_1$, gives
\[
\Phi(B_0)
\lesssim
\varphi(R_0)
\lesssim
\varphi(1)
\lesssim
\varphi(R_1)R_1^{\beta/p}
\lesssim
\Phi(B_1).
\]

Case (iii): $|a_0|\ge1$. 
Since $B_0\subset B_1$ and $|a_1|\le2R_1$, we have
\[
|a_0|
\le |a_1|+R_1
\le3R_1.
\]
If $\beta\ge0$, then $|a_0|^\beta\lesssim R_1^\beta$. 
Using the almost increasing property of $\varphi$, we obtain
\[
\Phi(B_0)
\lesssim
\varphi(R_0)|a_0|^{\beta/p}
\lesssim
\varphi(R_1)R_1^{\beta/p}
\lesssim
\Phi(B_1).
\]
If $\beta<0$, then $|a_0|^\beta\le1$. Hence, by the
almost increasing property of $\varphi$ and the third condition in
\eqref{eq:wab-in-Bphip},
\[
\Phi(B_0)
\lesssim
\varphi(R_0)
\lesssim
\varphi(1)
\lesssim
\varphi(R_1)R_1^{\beta/p}
\lesssim
\Phi(B_1).
\]
This proves
\[
\Phi(B_0)\lesssim\Phi(B_1)
\]
Thus $w_{\alpha,\beta}\in\mathcal B_{\varphi,p}$.

\emph{Step }2. We next prove the necessity. Let $w_{\alpha,\beta}\in\mathcal B_{\varphi,p}$.
For $R>0$, the averages over balls centered at the origin satisfy
\begin{equation}\label{eq:wab-centered-average}
\frac{w_{\alpha,\beta}(B(0,R))}{|B(0,R)|}
\sim
\begin{cases}
R^\alpha, & 0<R<1,\\
R^\beta, & R\ge1.
\end{cases}
\end{equation}
Indeed, the first estimate follows directly by integrating
$|x|^\alpha$ over $B(0,R)$, while the second follows from
$\alpha,\beta>-d$ and the definition of $w_{\alpha,\beta}$.

Let $B_0=B(0,R_0)$ and $B_1=B(0,R_1)$, 
where $0<R_0\le R_1$. Since $B_0\subset B_1$, the defining
almost-monotonicity of $\mathcal B_{\varphi,p}$ gives $\Phi(B_0)\lesssim\Phi(B_1)$.
If $0<R_0<1\le R_1$, then \eqref{eq:wab-centered-average} yields
\[
\varphi(R_0)R_0^{\alpha/p}
\lesssim
\varphi(R_1)R_1^{\beta/p},
\]
and the proof is complete.
\end{proof}

\begin{remark}\label{rem:Bphip-a=b}
Let 
$\alpha=\beta$, then the weight becomes the usual power weight
$w(x)=|x|^\alpha$, and
\eqref{eq:wab-in-Bphip} reduces to
\[
\varphi(R_0)R_0^{\alpha/p}
\lesssim
\varphi(R_1)R_1^{\alpha/p},
\qquad 0<R_0\le R_1,
\]
which is precisely \cite[Example~2.3]{Nak16}, cf. also Example~\ref{ex:Bp}.
\end{remark}

For the classical case $\varphi(t)=t^{d/u}$ with $p\le u$, 
we obtain a simple criterion similar to Example~\ref{ex:Bp}.

\begin{corollary}\label{cor:wab-Bup}
Let $0<p\le u<\infty$, and $w_{\alpha,\beta}$ be given by \eqref{eq:def-wab}, where $\alpha,\beta>-d$. Then 
\[
w_{\alpha,\beta}\in\mathcal B_{u,p}\quad\text{if and only if }\quad
\alpha,\beta\ge -\frac{dp}{u}.
\]
\end{corollary}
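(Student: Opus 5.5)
The plan is to specialize Lemma~\ref{lem:wab-Bphip} to $\varphi(t)=t^{d/u}$. Since $0<p\le u$, this $\varphi$ lies in $\mathcal G_p$, so the lemma applies. Membership $w_{\alpha,\beta}\in\mathcal B_{u,p}$ is therefore equivalent to the three inequalities in \eqref{eq:wab-in-Bphip} with $\varphi(R)R^{\gamma/p}=R^{(d/u+\gamma/p)}$ for $\gamma\in\{\alpha,\beta\}$. I will set
\[
\lambda_\alpha:=\frac du+\frac\alpha p,\qquad \lambda_\beta:=\frac du+\frac\beta p .
\]
Note that $\alpha\ge -dp/u$ is equivalent to $\lambda_\alpha\ge0$, and similarly for $\beta$.

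Necessity is read off the first and third conditions. The first condition gives $R_0^{\lambda_\alpha}\lesssim R_1^{\lambda_\alpha}$ for all $0<R_0\le R_1<1$. Taking $R_1=1/2$ and letting $R_0\to0^+$, this forces $\lambda_\alpha\ge0$, since otherwise the left side blows up. The third condition gives $R_0^{\lambda_\beta}\lesssim R_1^{\lambda_\beta}$ for $1\le R_0\le R_1$. Taking $R_0=1$ and letting $R_1\to\infty$, this forces $\lambda_\beta\ge0$, since otherwise the right side tends to $0$.

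For sufficiency, assume $\lambda_\alpha,\lambda_\beta\ge0$. The maps $R\mapsto R^{\lambda_\alpha}$ and $R\mapsto R^{\lambda_\beta}$ are then nondecreasing, which gives the first and third conditions with constant $1$. For the mixed condition, take $0<R_0<1\le R_1$. Then
\[
R_0^{\lambda_\alpha}\le 1\le R_1^{\lambda_\beta},
\]
using $R_0<1$, $\lambda_\alpha\ge0$ for the first inequality and $R_1\ge1$, $\lambda_\beta\ge0$ for the second. So the middle inequality also holds, and Lemma~\ref{lem:wab-Bphip} yields $w_{\alpha,\beta}\in\mathcal B_{u,p}$.

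There is no real obstacle once the lemma is available. The only points needing care are checking that $t^{d/u}\in\mathcal G_p$ (which uses $p\le u$) and choosing the right limiting regimes in the necessity part. In particular, the mixed condition adds no constraint beyond the two separate ones, because the threshold value $1$ separates the two regimes.
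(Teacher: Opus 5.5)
Your proposal is correct and follows the route the paper intends. The paper states the corollary without a separate proof, as the specialization of Lemma~\ref{lem:wab-Bphip} to $\varphi(t)=t^{d/u}\in\mathcal G_p$, and your exponents $\lambda_\alpha,\lambda_\beta$, your limiting choices for necessity, and the observation $R_0^{\lambda_\alpha}\le 1\le R_1^{\lambda_\beta}$ for the mixed case supply the verification the paper leaves implicit.
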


We now derive a necessary condition for the desired embedding.

\begin{theorem}\label{thm:Mf1wab-nece}
Let $0<p_i<\infty$, $\varphi_i\in\mathcal G_{p_i}$, $i=1,2$, and $w_{\alpha,\beta}\in\mathcal B_{\varphi_1,p_1}$ is given by \eqref{eq:def-wab}, where $\alpha,\beta>-d$.
Assume that $\varphi_1$ satisfies \eqref{eq:LRC-rad}.
If
\begin{equation}\label{eb:Mf1wab-Mfb}
\mathfrak M^{p_1}_{\varphi_1}(\mathbb R^d;w_{\alpha,\beta})
\hookrightarrow
M^{p_2}_{\varphi_2}(\mathbb R^d),
\end{equation}
then $p_2\le p_1$, $\beta\ge 0$, and
\begin{equation}\label{eq:nece-ball-cond}
\sup_{B=B(a,R)}
\frac{\varphi_2(R)}{\varphi_1(R)}
\Bigl(
\frac{w_{\alpha,\beta}(B)}{|B|}
\Bigr)^{-1/p_1}
<\infty.
\end{equation}
\end{theorem}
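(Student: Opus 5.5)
We prove the three conclusions separately; all of them rest on testing the embedding \eqref{eb:Mf1wab-Mfb} against suitable functions.

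\emph{The ball condition \eqref{eq:nece-ball-cond}.} Since $w_{\alpha,\beta}\in\mathcal B_{\varphi_1,p_1}$, Lemma~\ref{lem:1Q1w} (in its ball form, which is legitimate since $\varphi_1$ is radial) gives
\[
\|\chi_B\mid\mathfrak M^{p_1}_{\varphi_1}(\mathbb R^d;w_{\alpha,\beta})\|\sim\varphi_1(R)\bigl(w_{\alpha,\beta}(B)/|B|\bigr)^{1/p_1}.
\]
Remark~\ref{rem:1Q-rad}, or its whole-space analogue, gives $\|\chi_B\mid M^{p_2}_{\varphi_2}(\mathbb R^d)\|\sim\varphi_2(R)$. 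Inserting $\chi_B$ into the embedding therefore yields
\[
\varphi_2(R)\lesssim\varphi_1(R)\bigl(w_{\alpha,\beta}(B)/|B|\bigr)^{1/p_1},
\]
which is exactly \eqref{eq:nece-ball-cond}.

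\emph{The condition $p_2\le p_1$.} Take a regularity ball $B_\ast$ away from the origin, for instance inside $\{1<|x|<2\}$, on which $w_{\alpha,\beta}\sim1$. There the Samko quasi-norm of any $f$ supported in $B_\ast$ satisfies $\|f\mid\mathfrak M^{p_1}_{\varphi_1}(\mathbb R^d;w_{\alpha,\beta})\|\sim\|f\mid M^{p_1}_{\varphi_1}(\mathbb R^d)\|$. Apply the block-packing test functions $g_r$ from Step~2 of Theorem~\ref{thm:nece-Omega}, with $\psi=\varphi_1$ and $Q_\ast\subset B_\ast$; this is admissible because \eqref{eq:LRC-rad} holds and $\varphi_1\in\mathcal G_{p_1}$. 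The four-case estimate there controls the whole-space norm as well, since large cubes were treated using only that $\operatorname{supp}g_r\subset Q_\ast$. Hence the source norms stay bounded while $\|g_r\mid M^{p_2}_{\varphi_2}\|\gtrsim\varphi_1(r)^{p_1/p_2-1}\to\infty$ if $p_2>p_1$. Consequently $p_2\le p_1$.

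\emph{The condition $\beta\ge0$.} This is the main obstacle. I would combine \eqref{eq:nece-ball-cond}, used along a family of balls far from the origin, with the $\mathcal G$-structure of $\varphi_1$ and $\varphi_2$. Take balls $B(a,R)$ with fixed radius $R=1$ and $|a|\to\infty$. Then $w_{\alpha,\beta}(B)/|B|\sim|a|^\beta$, and \eqref{eq:nece-ball-cond} gives
\[
\varphi_2(1)\lesssim\varphi_1(1)|a|^{\beta/p_1}.
\]
If $\beta<0$, the right-hand side tends to $0$, contradicting $\varphi_2(1)>0$. Hence $\beta\ge0$. This step is simple once stated. The main care needed is to check that the weight average on far balls is indeed $\sim|a|^\beta$ uniformly in $a$, which follows as in Step~1.1 of Lemma~\ref{lem:wab-Bphip}.
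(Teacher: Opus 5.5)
Your proposal is correct and follows the paper's proof. The ball condition comes from testing on $\chi_B$ with Lemma~\ref{lem:1Q1w} and the $\mathcal B_{\varphi_1,p_1}$ hypothesis, and $\beta\ge0$ comes from unit balls $B(a,1)$ with $|a|\to\infty$, exactly as in the paper.

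For $p_2\le p_1$ the paper only cites Remark~\ref{rem:regularity-cubes-applications} and Proposition~\ref{pro:rad-Omega}, whereas you rerun the block-packing functions of Theorem~\ref{thm:nece-Omega} directly on $\mathbb R^d$ near a ball where $w_{\alpha,\beta}\sim1$. This is the same idea, and your version is slightly more careful: it avoids the unstated passage from the whole-space embedding to an embedding of domain spaces that are defined only through interior cubes.
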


\begin{proof}
The necessity of $p_2\le p_1$ follows from Remark~\ref{rem:regularity-cubes-applications} and Proposition~\ref{pro:rad-Omega}. 

By Lemma~\ref{lem:1Q1w}, the assumption
$w_{\alpha,\beta}\in\mathcal B_{\varphi_1,p_1}$ gives
\[
\|\chi_B\mid
\mathfrak M^{p_1}_{\varphi_1}(\mathbb R^d;w_{\alpha,\beta})
\|
\sim
\varphi_1(R)
\Bigl(
\frac{w_{\alpha,\beta}(B)}{|B|}
\Bigr)^{1/p_1}.
\]
On the other hand,
\[
\|\chi_B\mid M^{p_2}_{\varphi_2}(\mathbb R^d)\|
\sim
\varphi_2(R).
\]
Testing \eqref{eb:Mf1wab-Mfb} on $\chi_B$ gives
\eqref{eq:nece-ball-cond}.

To derive $\beta\ge0$, take $B_a=B(a,1)$, $|a|\to\infty$.
Then $\frac{w_{\alpha,\beta}(B_a)}{|B_a|} \sim |a|^\beta$.
The condition \eqref{eq:nece-ball-cond} gives 
\[ 
\frac{\varphi_2(1)}{\varphi_1(1)} |a|^{-\beta/p_1}\lesssim1. 
\] 
Since $\varphi_1(1),\varphi_2(1)>0$, this is impossible when $\beta<0$. 
\end{proof}

We next prove the sufficient result. 

\begin{proposition}\label{pro:gen-suff-S}
Let $0<p_2<p_1<\infty$, 
$\varphi_1\in\mathcal G_{p_1}$, $\varphi_2\in\mathcal G_{p_2}$ and $w$ be a weight.
Assume that $\sigma=\frac{p_2}{p_1-p_2}$ and 
\begin{equation}\label{eq:gen-holder-cond}
\sup_{B=B(a,R)}
\frac{\varphi_2(R)}{\varphi_1(R)}
\Bigl(
\frac1{|B|}
\int_B w(x)^{-\sigma}\,dx
\Bigr)^{\frac1{p_2}-\frac1{p_1}}
<\infty.
\end{equation}
Then
\[
\mathfrak M^{p_1}_{\varphi_1}(\mathbb R^d;w)
\hookrightarrow
M^{p_2}_{\varphi_2}(\mathbb R^d).
\]
\end{proposition}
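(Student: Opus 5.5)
The plan is to argue directly with H\"older's inequality on a single ball, mirroring Remark~\ref{rem:wMfa-Mfb}, but with the Samko-type normalization $|B|$ in place of $w(B)$. Since the Morrey functions are radial, the remark at the start of Section~\ref{sec:S} lets us use balls $B=B(a,R)$ in both the source and target quasi-norms. Only the definitions and H\"older's inequality are needed; the hypotheses $\varphi_i\in\mathcal G_{p_i}$ enter only through this cube--ball equivalence.

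Fix $f\in\mathfrak M^{p_1}_{\varphi_1}(\mathbb R^d;w)$ and a ball $B$. Write $|f|^{p_2}=|f|^{p_2}w^{p_2/p_1}\cdot w^{-p_2/p_1}$ and apply H\"older's inequality with exponents $p_1/p_2$ and $p_1/(p_1-p_2)$. Since $\frac{p_2}{p_1}\cdot\frac{p_1}{p_1-p_2}=\sigma$, this gives
\[
\Bigl(\int_B|f|^{p_2}\Bigr)^{1/p_2}
\le
\Bigl(\int_B|f|^{p_1}w\Bigr)^{1/p_1}
\Bigl(\int_B w^{-\sigma}\Bigr)^{\frac1{p_2}-\frac1{p_1}} .
\]
Next, multiply by $\varphi_2(R)|B|^{-1/p_2}$ and split
\[
|B|^{-1/p_2}=|B|^{-1/p_1}\,|B|^{-(\frac1{p_2}-\frac1{p_1})}.
\]
The factor $\varphi_1(R)|B|^{-1/p_1}(\int_B|f|^{p_1}w)^{1/p_1}$ is bounded by $\|f\mid\mathfrak M^{p_1}_{\varphi_1}(\mathbb R^d;w)\|$. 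What remains is exactly the quantity
\[
\frac{\varphi_2(R)}{\varphi_1(R)}\Bigl(\frac1{|B|}\int_B w^{-\sigma}\Bigr)^{\frac1{p_2}-\frac1{p_1}},
\]
which is bounded by \eqref{eq:gen-holder-cond}. Taking the supremum over all balls then gives the embedding.

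There is no serious obstacle. The only point needing care is that $w^{-\sigma}$ may fail to be locally integrable, but then \eqref{eq:gen-holder-cond} forces the integral to be finite on every ball. One should also check that $f\in L^{p_2}_{\mathrm{loc}}$, and this follows from the same estimate.
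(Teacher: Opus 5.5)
Your proposal is correct and is essentially the paper's proof. Both apply H\"older's inequality on each ball with exponents $p_1/p_2$ and $p_1/(p_1-p_2)$, split off the factor $\varphi_1(R)|B|^{-1/p_1}\bigl(\int_B|f|^{p_1}w\bigr)^{1/p_1}$, bound the remaining factor by \eqref{eq:gen-holder-cond}, and take the supremum over balls, with the cube--ball equivalence supplied by $\varphi_i\in\mathcal G_{p_i}$.
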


\begin{proof}
By H\"older's inequality and \eqref{eq:gen-holder-cond},
\begin{align*}
\varphi_2(R)
\Bigl(
\frac1{|B|}
\int_B |f(x)|^{p_2}\,dx
\Bigr)^{1/p_2}
&\le
\frac{\varphi_2(R)}{\varphi_1(R)}
\Bigl(
\frac1{|B|}
\int_B w(x)^{-\sigma}\,dx
\Bigr)^{\frac1{p_2}-\frac1{p_1}}
\\
&\quad\times
\varphi_1(R)
\Bigl(
\frac1{|B|}
\int_B |f(x)|^{p_1}w(x)\,dx
\Bigr)^{1/p_1}\\
&\lesssim \|f\mid \mathfrak M^{p_1}_{\varphi_1}(\mathbb R^d; w)\|.
\end{align*}
Taking the supremum over all balls gives the desired result.
\end{proof}

To obtain the second main result of this subsection, we need to estimate the averages of $w_{\alpha,\beta}^{-\sigma}$ on a ball $B$.

\begin{lemma}\label{lem:negative-power}
Let $\sigma=\frac{p_2}{p_1-p_2}>0$, $\sigma\alpha < d$ and $\beta\ge 0$. 
Then there exists a constant $C>0$ such that for all balls $B=B(a,R)$,
\begin{align}\label{eq:neg-ave-small}
    \frac1{|B|}
\int_B w_{\alpha,\beta}(x)^{-\sigma}\,dx
\le
C R^{-\sigma \alpha_+},
\qquad 0<R\le1,
\end{align}
and 
\begin{align}\label{eq:neg-ave-large}
    \frac1{|B|}
\int_B w_{\alpha,\beta}(x)^{-\sigma}\,dx
\le
C A_{\beta,\sigma}(R),
\qquad R\ge1,
\end{align}
where
\begin{equation}\label{eq:def-A-beta-s}
A_{\beta,\sigma}(R)
:=
\begin{cases}
R^{-\sigma \beta},&\sigma \beta < d,\\
R^{-d}\log(e+R),&\sigma \beta = d,\\
R^{-d},&\sigma \beta > d.
\end{cases}
\end{equation}
\end{lemma}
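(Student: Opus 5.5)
We estimate the integrand pointwise, $w_{\alpha,\beta}(x)^{-\sigma}=|x|^{-\sigma\alpha}$ for $|x|<1$ and $|x|^{-\sigma\beta}$ for $|x|\ge1$, and split the ball into its parts inside and outside the unit ball. The hypothesis $\sigma\alpha<d$ makes $|x|^{-\sigma\alpha}$ locally integrable. The hypothesis $\beta\ge0$ gives $|x|^{-\sigma\beta}\le1$ on $\{|x|\ge1\}$. Both estimates then reduce to integrals of radial powers over balls.

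\emph{Small balls, $0<R\le1$.} If $\alpha\le0$, then $-\sigma\alpha\ge0$, so $|x|^{-\sigma\alpha}\le1$ on $\{|x|<1\}$, and $|x|^{-\sigma\beta}\le1$ outside. Hence the average is at most $1=R^{-\sigma\alpha_+}$. If $\alpha>0$, we use that the integral of a radially decreasing function over a ball of fixed radius is maximized when the ball is centred at the origin. Indeed, $g=\min\{|x|^{-\sigma\alpha},\dots\}$, or more simply $|x|^{-\sigma\alpha}\chi_{\{|x|<1\}}+\chi_{\{|x|\ge1\}}$, dominates $w^{-\sigma}$ and is radially decreasing since $\sigma\alpha>0$ and $\beta\ge0$. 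By the rearrangement (Hardy--Littlewood) inequality,
\[
\int_{B(a,R)}w_{\alpha,\beta}^{-\sigma}\le\int_{B(0,R)}|x|^{-\sigma\alpha}\,dx\sim R^{d-\sigma\alpha},
\]
using $R\le1$ and $\sigma\alpha<d$. Dividing by $|B|\sim R^d$ gives $R^{-\sigma\alpha}$.

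\emph{Large balls, $R\ge1$.} Set $h(x):=\min\{|x|^{-\sigma\alpha},\dots\}$; more concretely, the function $w_{\alpha,\beta}^{-\sigma}$ is dominated by $H:=|x|^{-\sigma\alpha_+}\chi_{\{|x|<1\}}+|x|^{-\sigma\beta}\chi_{\{|x|\ge1\}}$, which is radially decreasing. Again
\[
\int_{B(a,R)}w^{-\sigma}\le\int_{B(0,R)}H\le C+\int_{1\le|x|<R}|x|^{-\sigma\beta}\,dx.
\]
The unit-ball part is finite because $\sigma\alpha_+<d$. In polar coordinates the last integral is $\int_1^R t^{d-1-\sigma\beta}\,dt$, which is $\lesssim R^{d-\sigma\beta}$, $\lesssim\log(e+R)$, or $\lesssim1$ according as $\sigma\beta<d$, $=d$, $>d$. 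The constant $C$ is absorbed because $R\ge1$; in the first case $R^{d-\sigma\beta}\ge1$. Dividing by $R^d$ yields $A_{\beta,\sigma}(R)$.

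\emph{Main obstacle.} The only delicate point is the uniformity in the centre $a$, which the rearrangement step handles. For $\alpha\le0$ the dominating function on the unit ball can simply be the constant $1$. If one prefers to avoid rearrangement, the alternative is a direct case split $|a|\le2R$ versus $|a|>2R$. In the first case $B(a,R)\subset B(0,3R)$. In the second case $|x|\sim|a|\ge2R$ on $B$, so the integrand is bounded by its value at distance $\sim R$ from the origin.
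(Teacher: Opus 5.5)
Your proof is correct, but it takes a different route from the paper on the one delicate point, namely uniformity in the centre $a$.

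\emph{The paper's route.} The paper splits according to the position of the centre. If $|a|\le 2R$, it uses $B(a,R)\subset B(0,3R)$ and integrates radial powers. If $|a|>2R$, it uses $|x|\sim|a|$ on the ball, with further subcases $|a|<1$, $|a|\ge 1$, and balls meeting the unit sphere. This is exactly the alternative you sketch at the end.

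\emph{Your route.} You dominate $w_{\alpha,\beta}^{-\sigma}$ by a radially nonincreasing majorant: $|x|^{-\sigma\alpha}\chi_{\{|x|<1\}}+\chi_{\{|x|\ge 1\}}$ for small balls, and $H$ with $\alpha_+$ for large balls. You then use that such a function has its largest integral over balls of a fixed radius when the ball is centred at the origin. This treats all centre positions at once and avoids the paper's sub-case bookkeeping. It also makes the role of each hypothesis transparent:
\begin{itemize}
\item $\beta\ge 0$ and the replacement $\alpha\mapsto\alpha_+$ are exactly what make the majorant radially monotone;
\item $\sigma\alpha_+<d$ is exactly what makes it locally integrable.
\end{itemize}
The trade-off is that your argument needs a radially monotone majorant with the right averages. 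The paper's split only uses that the integrand is essentially constant on balls away from the origin, so it adapts more readily to non-monotone radial profiles.

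\emph{A technical point to fix.} Your $g$, and $H$ when $\beta=0$, are bounded below by $1$ and do not vanish at infinity. So the Hardy--Littlewood rearrangement inequality in its usual formulation does not literally apply. You can repair this in either of two ways.
\begin{itemize}
\item Apply the inequality to $g-1$ (resp.\ $H-1$), which is compactly supported.
\item Argue directly by the layer-cake formula. Each superlevel set $\{g>t\}$ is empty, a centred ball, or all of $\mathbb R^d$. Hence $|\{g>t\}\cap B(a,R)|\le\min\{|\{g>t\}|,|B(a,R)|\}=|\{g>t\}\cap B(0,R)|$, and integrating in $t$ gives the claim.
\end{itemize}
Also delete the unfinished expressions $\min\{|x|^{-\sigma\alpha},\dots\}$; the concrete majorants you give right after them are the ones your argument actually uses.
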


\begin{proof}
\emph{Step }1. First assume $0<R\le1$. If $|a|\le 2R$, then
$B(a,R)\subset B(0,3R)$. Hence
\[
\int_{B(a,R)} w_{\alpha,\beta}(x)^{-\sigma}\,dx
\le
\int_{B(0,3R)\cap\{|x|< 1\}} |x|^{-\sigma \alpha}\,dx
+\int_{B(0,3R)\cap\{|x|\ge 1\}} |x|^{-\sigma \beta}\,dx.
\]
Since $\beta\ge 0$, the second integral is bounded by $CR^d$.
If $\alpha\ge0$, the assumption $\sigma \alpha<d$ yields
\[
\int_{B(0,3R)\cap\{|x|< 1\}} |x|^{-\sigma \alpha}\,dx
\lesssim R^{d-\sigma \alpha}<\infty.
\]
If $-d<\alpha<0$, then $-\sigma\alpha>0$. Since $R\le 1$, we have $|x|^{-\sigma \alpha}\le (3R)^{-\sigma\alpha}\lesssim 1$ on
$B(0,3R)$, so the integral is bounded by $R^d$. Dividing by
$|B|\sim R^d$, we obtain
\[
\frac1{|B|}\int_B w_{\alpha,\beta}(x)^{-\sigma}\,dx
\lesssim R^{-\sigma \alpha_+}.
\]

If $|a|>2R$, then $|x|\sim |a|$ on $B(a,R)$. If $|a|<1$, then
$w_{\alpha,\beta}(x)^{-\sigma}\sim |a|^{-\sigma\alpha}$. For $\alpha\ge0$, since
$|a|>2R$, this is bounded by $R^{-\sigma \alpha}$. For $-d<\alpha<0$, it is
bounded by $1$. If $|a|\ge1$, then either $B(a,R)\subset \{|x|\ge 1\}$ implies $w_{\alpha,\beta}(x)^{-\sigma}\sim |a|^{-\sigma\beta}\le1$ by $\beta\ge0$, or $B(a,R)$ intersects the unit sphere and $|x|\sim 1$ on $B(a,R)$ yield $w_{\alpha,\beta}(x)^{-\sigma}\lesssim 1$. This proves \eqref{eq:neg-ave-small}.

\emph{Step }2. Now assume $R\ge1$. If $|a|\le2R$, then
$B(a,R)\subset B(0,3R)$. Therefore
\[
\int_B w_{\alpha,\beta}(x)^{-\sigma}\,dx
\le
\int_{B(0,1)} |x|^{-\sigma\alpha}\,dx
+
\int_{1\le |x|<3R} |x|^{-\sigma\beta}\,dx.
\]
The first integral is finite by $\sigma \alpha<d$. For the second integral,
\[
\int_1^{3R} \rho^{d-1-\sigma\beta}\,d\rho
\lesssim
\begin{cases}
R^{d-\sigma \beta},&\sigma \beta<d,\\
\log(e+R),&\sigma \beta=d,\\
1,&\sigma \beta>d.
\end{cases}
\]
Dividing by $|B|\sim R^d$, we obtain
\eqref{eq:neg-ave-large}.

If $|a|>2R$, then $|x|\sim |a|$ on $B(a,R)$, and since
$R\ge1$, we have $|a|>2$. Thus
\[
w_{\alpha,\beta}(x)^{-\sigma}\sim |a|^{-\sigma\beta}\le R^{-\sigma\beta}.
\]
If $\sigma\beta<d$, this gives $R^{-\sigma\beta}$. If $\sigma\beta\ge d$, then
$R^{-\sigma\beta}\le R^{-d}$, which is dominated by the corresponding right-hand
side in \eqref{eq:def-A-beta-s}. The proof is complete.
\end{proof}

\begin{remark}\label{rem:beta<0}
    The assumption $\beta\ge 0$ is essential. Indeed, if $\beta<0$, then for fixed $R>0$ and $|a|\to\infty$, 
    \[ 
    \frac1{|B(a,R)|} \int_{B(a,R)}w_{\alpha,\beta}(x)^{-\sigma}\,dx \sim |a|^{-\sigma\beta}\to\infty. \] 
    Therefore, no estimate depending only on the radius $R$ can hold uniformly over all centers $a$. 
\end{remark}

We can now state the main sufficient condition.

\begin{theorem}\label{thm:Mf1wab-suff}
Let $0<p_2<p_1<\infty$, 
$\varphi_1\in\mathcal G_{p_1}$ and  
$\varphi_2\in\mathcal G_{p_2}$. 
Let $\sigma=p_2/(p_1-p_2)$ and $w_{\alpha,\beta}$ be
given by \eqref{eq:def-wab}, where $\alpha>-d$ and $\beta\ge 0$. 
Assume that $\sigma\alpha < d$,
\begin{align}
    &\sup_{0<R\le1}
\frac{\varphi_2(R)}{\varphi_1(R)}
R^{-{\alpha_+}/{p_1}}
<\infty, \label{eq:suff-small-cond}\\ 
\text{and}\qquad
&\sup_{R\ge1}
\frac{\varphi_2(R)}{\varphi_1(R)}
A_{\beta,\sigma}(R)^{1/{p_2}-1/{p_1}}
<\infty, \label{eq:suff-large-cond}
\end{align}
where $A_{\beta,\sigma}$ is defined by \eqref{eq:def-A-beta-s}.
Then
\[
\mathfrak M^{p_1}_{\varphi_1}(\mathbb R^d;w_{\alpha,\beta})
\hookrightarrow
M^{p_2}_{\varphi_2}(\mathbb R^d).
\]
\end{theorem}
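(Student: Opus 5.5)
The plan is to reduce the statement to Proposition~\ref{pro:gen-suff-S} with $w=w_{\alpha,\beta}$ and $\sigma=p_2/(p_1-p_2)$. We must then verify condition \eqref{eq:gen-holder-cond}, that is, bound
\[
\frac{\varphi_2(R)}{\varphi_1(R)}
\Bigl(\frac1{|B|}\int_B w_{\alpha,\beta}(x)^{-\sigma}\,dx\Bigr)^{\frac1{p_2}-\frac1{p_1}}
\]
uniformly over all balls $B=B(a,R)$. The hypotheses $\sigma\alpha<d$ and $\beta\ge0$ are exactly those of Lemma~\ref{lem:negative-power}, so the averages of $w_{\alpha,\beta}^{-\sigma}$ are controlled uniformly in the center $a$. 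We split according to $0<R\le1$ and $R\ge1$.

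For $R\ge1$ we use \eqref{eq:neg-ave-large}. The average is at most $C A_{\beta,\sigma}(R)$, and since $\frac1{p_2}-\frac1{p_1}>0$ the map $t\mapsto t^{1/p_2-1/p_1}$ is increasing. The quantity above is therefore bounded by $C\,\frac{\varphi_2(R)}{\varphi_1(R)}A_{\beta,\sigma}(R)^{1/p_2-1/p_1}$, which is uniformly bounded by \eqref{eq:suff-large-cond}.

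For $0<R\le1$ we use \eqref{eq:neg-ave-small}. The average is at most $CR^{-\sigma\alpha_+}$, so the quantity is bounded by a constant times
\[
\frac{\varphi_2(R)}{\varphi_1(R)}\,R^{-\sigma\alpha_+(\frac1{p_2}-\frac1{p_1})}.
\]
The exponent simplifies: $\sigma\bigl(\frac1{p_2}-\frac1{p_1}\bigr)=\frac{p_2}{p_1-p_2}\cdot\frac{p_1-p_2}{p_1p_2}=\frac1{p_1}$. The factor is therefore exactly $R^{-\alpha_+/p_1}$, and the bound follows from \eqref{eq:suff-small-cond}.

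Combining the two ranges yields \eqref{eq:gen-holder-cond}, and Proposition~\ref{pro:gen-suff-S} then gives the embedding. The only step needing care is the exponent identity $\sigma(1/p_2-1/p_1)=1/p_1$, which turns the small-scale condition into the stated form. Two further points need only a brief remark. First, \eqref{eq:suff-large-cond} is phrased for $R\ge1$ and \eqref{eq:suff-small-cond} for $R\le1$, so the ranges overlap consistently at $R=1$. Second, Proposition~\ref{pro:gen-suff-S} is formulated with balls, which is legitimate for radial $\varphi$ by the ball–cube equivalence noted at the beginning of Section~\ref{sec:S}. I do not expect a genuine obstacle, since all the analytic work is already contained in Lemma~\ref{lem:negative-power}.
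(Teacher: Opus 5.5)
Your proposal is correct and is the paper's own proof: it likewise applies Proposition~\ref{pro:gen-suff-S}, using the bounds of Lemma~\ref{lem:negative-power} together with the identity $\sigma(\frac1{p_2}-\frac1{p_1})=\frac1{p_1}$. Your explicit split into $0<R\le1$ and $R\ge1$ spells out the details that the paper's one-line argument leaves to the reader.
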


\begin{proof}
Applying Proposition~\ref{pro:gen-suff-S}, Lemma~\ref{lem:negative-power} and the fact that $\sigma(\frac{1}{p_2}-\frac{1}{p_1})=\frac{1}{p_1}$, we obtain Theorem~\ref{thm:Mf1wab-suff}.
\end{proof}

\begin{remark}\label{rem:gap-Mf1wab}
The difference between the necessary and sufficient conditions above comes
from the different weight quantities detected by the two arguments.
Testing the embedding on characteristic functions yields the necessary
condition
\[
\frac{\varphi_2(R)}{\varphi_1(R)}
\Bigl(\frac{w(B)}{|B|}\Bigr)^{-1/p_1}\lesssim1,
\]
which involves the average of $w$ over $B$. On the other hand, the
H\"older argument used for sufficiency requires
\[
\frac{\varphi_2(R)}{\varphi_1(R)}
\Bigl(\frac1{|B|}\int_Bw(x)^{-\sigma}\,dx\Bigr)^{
1/p_2-1/p_1}
\lesssim1.
\]
Thus the sufficient condition depends on the average of a negative power of the weight, whereas the necessary condition depends on the average of the weight itself. These two quantities need not be comparable unless an additional condition is imposed on the weight. This explains the gap between the necessary and sufficient conditions.
\end{remark}

\begin{remark}\label{rem:Mf1wab-a=b}

Let $\alpha=\beta$. Then $w_{\alpha,\beta}(x)=|x|^\alpha$. Under the assumptions of
Theorem~\ref{thm:Mf1wab-suff}, we then have
$0\le\alpha<d/\sigma$. Consequently,
\[
A_{\alpha,\sigma}(R)=R^{-\sigma\alpha},
\]
and conditions \eqref{eq:suff-small-cond} and
\eqref{eq:suff-large-cond} combine into
\[
\sup_{R>0}
\frac{\varphi_2(R)}{\varphi_1(R)}
R^{-\alpha/p_1}<\infty.
\]
This also agrees with the necessary condition in Theorem~\ref{thm:Mf1wab-nece}.

In particular, when $\alpha=\beta=0$, one has
$w_{\alpha,\beta}\equiv 1$ and therefore $\mathfrak M_\varphi^p(\mathbb R^d;1)
    =
    M_\varphi^p(\mathbb R^d)$.
Thus, for $p_2<p_1$, the necessary condition
\eqref{eq:nece-ball-cond} and the sufficient conditions
\eqref{eq:suff-small-cond}--\eqref{eq:suff-large-cond} reduce to the same
condition
\[
    \sup_{R>0}\frac{\varphi_2(R)}{\varphi_1(R)}<\infty,
\]
while the endpoint case $p_2=p_1$ is also covered by the
unweighted embedding criterion \eqref{eb:Mfa-Mfb}.
\end{remark}

\begin{remark}\label{rem:p_1=p_2}
A direct estimate at the endpoint $p_1=p_2=p$ yields an $L^\infty$-type condition on $w^{-1}$ on each ball.
That is, 
\[
\Bigl(\int_B |f(x)|^p\,dx\Bigr)^{1/p}
\le
\Bigl(\operatorname*{ess\,sup}_{x\in B} w(x)^{-1}\Bigr)^{1/p}
\Bigl(\int_B |f(x)|^p w(x)\,dx\Bigr)^{1/p}.
\]
Hence a sufficient condition for
\[
\mathfrak M_p^{\varphi_1}(\mathbb R^d;w)
\hookrightarrow
M_p^{\varphi_2}(\mathbb R^d)
\]
is
\[
\sup_{B=B(a,r)}
\frac{\varphi_2(r)}{\varphi_1(r)}
\Bigl(
\operatorname*{ess\,sup}_{x\in B} w(x)^{-1}
\Bigr)^{1/p}
<\infty.
\]
For the weight $w_{\alpha,\beta}$, this requires in particular $\alpha\le0\le\beta$.

If $p_2>p_1$, the embedding cannot hold in general. Indeed, on every ball
away from the origin the weight $w_{\alpha,\beta}$ is equivalent to a
positive constant, and the embedding would imply the local embedding
$L^{p_1}\hookrightarrow L^{p_2}$, which is false for $p_2>p_1$.
\end{remark}

Next we consider two classical consequences of the preceding results. 

\begin{example}\label{ex:Lwaba-Lb}
    Let $\varphi_i(R)=R^{d/p_i}$, $i=1,2$. Then the embedding \eqref{eb:Mf1wab-Mfb} reads as
    \begin{equation}\label{eb:Lwaba-Lb}
    L_{p_1}(\mathbb R^d; w_{\alpha,\beta})
    \hookrightarrow
    L_{p_2}(\mathbb R^d).
    \end{equation}
    Conditions~\eqref{eq:nece-ball-cond},~\eqref{eq:suff-small-cond},~\eqref{eq:suff-large-cond} and Remark~\ref{rem:p_1=p_2} imply
    \[
    p_1\ge p_2 \quad \text{and} \quad
    \begin{cases}
        -d<\alpha \le 0 \le \beta, & \text{if $p_1=p_2$,}\\ 
        1+\frac{\alpha}{d}<\frac{p_1}{p_2}<1+\frac{\beta}{d},&\text{if $p_1>p_2$.}
    \end{cases}
    \]
    This agrees with the weighted Lebesgue
embedding result in \cite[Lemma~3.1]{HS17}. Note that for the endpoint
$p_1=p_2$ the embedding \eqref{eb:Lwaba-Lb} holds if and only if  $w_{\alpha,\beta}^{-1} \in L^\infty(\mathbb R^d)$, or equivalently, $\operatorname*{ess\,inf}_{x\in\mathbb R^d}w_{\alpha,\beta}(x)>0$. This implies $\alpha \le 0 \le \beta$, cf. also Remark~\ref{rem:p_1=p_2}.
\end{example}

Now we consider the classical Morrey spaces. Recall that $w_{\alpha,\beta} \in \mathcal B_{u,p}$ if and only if  $\alpha,\beta\ge -dp/u$, cf. Corollary~\ref{cor:wab-Bup}. Note that if $p<u$ one always has $\alpha,\beta\ge -dp/u > -d$.

\begin{proposition}
\label{pro:M1wab-nece}
Let $0<p_i< u_i<\infty$, $i=1,2$, and $w_{\alpha,\beta}$ be given by \eqref{eq:def-wab}, where $\alpha,\beta\ge -dp_1/u_1$.
If
\[
\mathfrak M_{p_1}^{u_1}(\mathbb R^d;w_{\alpha,\beta})
\hookrightarrow
M_{p_2}^{u_2}(\mathbb R^d),
\]
then
\[
p_2\le p_1
\quad\text{and}\quad
\begin{cases}
  -\frac{p_1d}{u_1}\le  \alpha
\le
p_1d\bigl(\frac1{u_2}-\frac1{u_1}\bigr),\\[2mm] 
\beta
\ge
\max\Bigl\{
0,\,
p_1 d\bigl(\frac1{u_2}-\frac1{u_1}\bigr)
\Bigr\}.
\end{cases}
\]
\end{proposition}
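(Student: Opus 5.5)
The plan is to specialize Theorem~\ref{thm:Mf1wab-nece} to $\varphi_i(t)=t^{d/u_i}$ and read off exponent conditions from \eqref{eq:nece-ball-cond} by testing on suitable families of balls. For the hypotheses: $\varphi_i\in\mathcal G_{p_i}$ because $p_i\le u_i$. Condition \eqref{eq:LRC-rad} holds since $u_1<\infty$. By Corollary~\ref{cor:wab-Bup}, the assumption $\alpha,\beta\ge -dp_1/u_1$ is exactly $w_{\alpha,\beta}\in\mathcal B_{u_1,p_1}=\mathcal B_{\varphi_1,p_1}$. Also $\alpha,\beta>-d$ because $p_1<u_1$. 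Theorem~\ref{thm:Mf1wab-nece} then gives $p_2\le p_1$, $\beta\ge0$, and
\[
\sup_{B=B(a,R)} R^{d/u_2-d/u_1}\Bigl(\frac{w_{\alpha,\beta}(B)}{|B|}\Bigr)^{-1/p_1}<\infty.
\]
The lower bound $\alpha\ge -p_1d/u_1$ is simply the hypothesis.

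For the remaining bounds I test the displayed condition on balls centered at the origin, using \eqref{eq:wab-centered-average}. For $0<R<1$, the average of $w_{\alpha,\beta}$ over $B(0,R)$ is comparable to $R^\alpha$, so the quantity becomes $R^{d/u_2-d/u_1-\alpha/p_1}$. Its boundedness as $R\to0^+$ forces $d/u_2-d/u_1-\alpha/p_1\ge0$, that is, $\alpha\le p_1d(1/u_2-1/u_1)$. For $R\ge1$, the average is comparable to $R^\beta$, so the quantity is $R^{d/u_2-d/u_1-\beta/p_1}$. Boundedness as $R\to\infty$ gives $\beta\ge p_1d(1/u_2-1/u_1)$. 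Combined with $\beta\ge0$, this yields the stated maximum.

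The only point needing care is checking that all hypotheses of Theorem~\ref{thm:Mf1wab-nece} apply. The $\mathcal B$-class membership is the one condition that genuinely constrains the weight, and Corollary~\ref{cor:wab-Bup} supplies it directly. Everything else is exponent bookkeeping. As an optional consistency check, testing balls $B(a,R)$ with $|a|\sim1$ and $R\to0$ should give $u_2\le u_1$. This follows from $\alpha\le p_1d(1/u_2-1/u_1)$ only when $\alpha\ge0$, so it is not required for the stated conclusion, which I will leave unchanged.
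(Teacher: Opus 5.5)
Your proposal is correct and follows the paper's proof essentially verbatim. You specialize Theorem~\ref{thm:Mf1wab-nece} to $\varphi_i(t)=t^{d/u_i}$, with the $\mathcal B_{u_1,p_1}$-membership supplied by Corollary~\ref{cor:wab-Bup}. You then test \eqref{eq:nece-ball-cond} on origin-centered balls with $R<1$ and $R\ge1$ to obtain $\alpha\le p_1d(\frac1{u_2}-\frac1{u_1})\le\beta$, and combine this with $\beta\ge0$ and the hypothesis $\alpha\ge -p_1d/u_1$. Your explicit verification of the hypotheses (including \eqref{eq:LRC-rad} and $\alpha,\beta>-d$) is slightly more careful than the paper's. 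Your side remark that small balls away from the origin also force $u_2\le u_1$ is correct, but, as you say, it is not needed for the stated conclusion.
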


\begin{proof}
Let $\varphi_i(R)=R^{d/u_i}$ for $p_i< u_i$ and $i=1,2$. 
Set
\[
\Delta:=d\Bigl(\frac1{u_2}-\frac1{u_1}\Bigr).
\]
Then ${\varphi_2(R)}/{\varphi_1(R)}
=
R^\Delta$.
By Theorem~\ref{thm:Mf1wab-nece}, we have $p_2\le p_1$ and
\[
\sup_{B=B(a,R)}
R^\Delta
\Bigl(
\frac{w_{\alpha,\beta}(B)}{|B|}
\Bigr)^{-1/p_1}
<\infty.
\]
If $0<R\le1$, $R^{\Delta-\alpha/p_1}\lesssim1$; if $R\ge1$, $R^{\Delta-\beta/p_1}\lesssim1$. 
Hence
\[
\alpha\le p_1\Delta,
\qquad
\beta\ge p_1\Delta.
\]
Since $w_{\alpha,\beta}\in \mathcal B_{u_1,p_1}$ and $\beta\ge 0$, we have $-p_1d/u_1\le \alpha \le p_1\Delta$ and $\beta\ge \max\{0,p_1\Delta\}$.
\end{proof}

\begin{proposition}
\label{pro:M1wab-suff}
Let $0<p_i< u_i<\infty$, $i=1,2$, and $p_2<p_1$. 
Set
\[
\Delta:=d\Bigl(\frac1{u_2}-\frac1{u_1}\Bigr),
\qquad
\Lambda:=d\Bigl(\frac1{p_2}-\frac1{p_1}\Bigr),
\qquad
\sigma:=\frac{p_2}{p_1-p_2}.
\]
Let $w_{\alpha,\beta}$ be given by \eqref{eq:def-wab} with
$\alpha\ge -p_1d/u_1$ and $\beta\ge 0$.
Assume that $\sigma\alpha<d$, $\alpha_+\le p_1\Delta$ and 
\[
\begin{cases}
    \text{either}\qquad& \Delta<\Lambda
\quad\text{and}\quad
\beta\ge p_1\Delta,\\ 
\text{or}\qquad
& \Delta=\Lambda
\quad\text{and}\quad
\beta>p_1\Delta.
\end{cases}
\]
Then
\[
\mathfrak M_{p_1}^{u_1}(\mathbb R^d;w_{\alpha,\beta})
\hookrightarrow
M_{p_2}^{u_2}(\mathbb R^d).
\]
\end{proposition}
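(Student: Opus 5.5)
The plan is to deduce the statement from Theorem~\ref{thm:Mf1wab-suff} with $\varphi_i(R)=R^{d/u_i}$, $i=1,2$. Since $0<p_i<u_i$, we have $\varphi_i\in\mathcal G_{p_i}$: the function $R^{d/u_i}$ is increasing and $R^{d/u_i-d/p_i}$ is decreasing. Moreover, $\mathfrak M^{p_1}_{\varphi_1}(\mathbb R^d;w_{\alpha,\beta})=\mathfrak M^{u_1}_{p_1}(\mathbb R^d;w_{\alpha,\beta})$ and $M^{p_2}_{\varphi_2}(\mathbb R^d)=M^{u_2}_{p_2}(\mathbb R^d)$ by Remark~\ref{rem:Mfvw}. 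The structural hypotheses $p_2<p_1$, $\alpha>-d$ (from $\alpha\ge -p_1d/u_1>-d$), $\beta\ge0$ and $\sigma\alpha<d$ are assumed explicitly. It therefore remains to verify \eqref{eq:suff-small-cond} and \eqref{eq:suff-large-cond}, where $\varphi_2(R)/\varphi_1(R)=R^\Delta$. The identity $\sigma(\tfrac1{p_2}-\tfrac1{p_1})=\tfrac1{p_1}$ and the relation $\tfrac1{p_2}-\tfrac1{p_1}=\Lambda/d$ will be used throughout.

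\emph{Small radii.} Condition \eqref{eq:suff-small-cond} reads $\sup_{0<R\le1}R^{\Delta-\alpha_+/p_1}<\infty$. This holds exactly when $\Delta\ge\alpha_+/p_1$, i.e.\ $\alpha_+\le p_1\Delta$, which is a hypothesis. In particular $\Delta\ge0$.

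\emph{Large radii.} This is the main bookkeeping step, and it splits according to \eqref{eq:def-A-beta-s}.
\begin{itemize}
\item If $\sigma\beta<d$, then $A_{\beta,\sigma}(R)^{1/p_2-1/p_1}=R^{-\beta/p_1}$, so we need $\Delta\le\beta/p_1$, i.e.\ $\beta\ge p_1\Delta$. This condition appears in both alternatives.
\item If $\sigma\beta>d$, then the factor is $R^{-d(1/p_2-1/p_1)}=R^{-\Lambda}$, and boundedness for $R\ge1$ needs $\Delta\le\Lambda$, which holds in either alternative.
\item If $\sigma\beta=d$, the factor is $R^{-\Lambda}\log(e+R)^{\Lambda/d}$. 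When $\Delta<\Lambda$ this is bounded. When $\Delta=\Lambda$ the logarithm would blow up, so this case has to be excluded. Note that $\sigma\beta=d$ is equivalent to $\beta=d/\sigma=d(p_1-p_2)/p_2=p_1\Lambda$, so when $\Delta=\Lambda$ it means $\beta=p_1\Delta$. The strict inequality $\beta>p_1\Delta$ in the second alternative excludes exactly this borderline.
\end{itemize}

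It then remains to check that each alternative places us in a good subcase. In the first alternative, $\Delta<\Lambda$ and $\beta\ge p_1\Delta$: every subcase is fine, since the logarithmic case needs only $\Delta<\Lambda$ and the case $\sigma\beta<d$ needs $\beta\ge p_1\Delta$. In the second alternative, $\Delta=\Lambda$ and $\beta>p_1\Delta=p_1\Lambda=d/\sigma$, so $\sigma\beta>d$, and the bound $R^{\Delta-\Lambda}=1$ applies. Hence \eqref{eq:suff-large-cond} holds, and Theorem~\ref{thm:Mf1wab-suff} yields the embedding.

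The only delicate point is this logarithmic endpoint and its translation $\sigma\beta=d\iff\beta=p_1\Lambda$. Everything else reduces to comparing exponents of $R$.
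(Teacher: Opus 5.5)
Your proposal is correct and follows the paper's proof. You specialize Theorem~\ref{thm:Mf1wab-suff} to $\varphi_i(R)=R^{d/u_i}$, reduce \eqref{eq:suff-small-cond} to $\alpha_+\le p_1\Delta$, and split \eqref{eq:suff-large-cond} into the three regimes of $A_{\beta,\sigma}$, excluding the logarithmic borderline $\sigma\beta=d\iff\beta=p_1\Lambda$ when $\Delta=\Lambda$. Your explicit observation that the second alternative forces $\sigma\beta>d$ simply makes the paper's final case check a bit more transparent.
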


\begin{proof}
We apply Theorem~\ref{thm:Mf1wab-suff} with $\varphi_i(R)=R^{d/u_i}$, $p_i<u_i$, $i=1,2$.
Then ${\varphi_2(R)}/{\varphi_1(R)} = R^\Delta$.
Condition \eqref{eq:suff-small-cond} becomes
\[
\sup_{0<R\le1}
R^{\Delta-\alpha_+/p_1}
<\infty,
\]
which is equivalent to
\[
\alpha_+\le p_1\Delta.
\]

For the condition \eqref{eq:suff-large-cond}, 
\[
\sup_{R\ge1}
R^\Delta A_{\beta,\sigma}(R)^{1/p_2-1/p_1}
<\infty.
\]
Consider three cases in $A_{\beta,\sigma}$ defined by \eqref{eq:def-A-beta-s}.
If $\beta \sigma<d$, then $\sup_{R\ge1}R^{\Delta-\beta/p_1}<\infty$, namely, $\beta\ge p_1\Delta$. 
If $\beta \sigma=d$, then $\beta=p_1\Lambda$, and $\sup_{R\ge1}
R^{\Delta-\Lambda}
\bigl(\log(e+R)\bigr)^{1/p_2-1/p_1}
<\infty$.
This holds precisely when $\Delta<\Lambda$.
If $\beta \sigma>d$, then
$
\sup_{R\ge1}R^{\Delta-\Lambda}<\infty
$,
that is, $\Delta\le\Lambda$.

Thus we conclude that 
if $\Delta<\Lambda$, then $\beta\ge p_1\Delta$ is admissible, while if
$\Delta=\Lambda$, the logarithmic borderline is excluded and one needs
$\beta>p_1\Delta$. This proves the assertion.
\end{proof}

\begin{remark}\label{rem:u=p}
The Lebesgue case corresponds to $u_i=p_i$. Then
\[
\Delta=\Lambda=d\Bigl(\frac1{p_2}-\frac1{p_1}\Bigr).
\]
Thus Proposition~\ref{pro:M1wab-suff} implies, for
$p_2<p_1$,
\[
\sigma\alpha<d,
\qquad
\beta>p_1\Delta
=
d\Bigl(\frac{p_1}{p_2}-1\Bigr),
\]
which is equivalent to 
\[
1+\frac{\alpha}{d}
<
\frac{p_1}{p_2}
<
1+\frac{\beta}{d}.
\]
The endpoint $p_1=p_2$ remains separate and is described in
Example~\ref{ex:Lwaba-Lb}.
\end{remark}

\subsection{Embeddings with general Muckenhoupt weights}\label{ssec:S-w}

We now replace the specific weight $w_{\alpha,\beta}$ by a general Muckenhoupt weight in the source space. 

\begin{theorem}\label{thm:1Mfa-Mfb}
Let $0<p_i<\infty$ and $\varphi_i\in\mathcal G_{p_i}$, $i=1,2$. Consider the embedding 
\begin{align}\label{eb:1Mfa-Mfb}
    \mathfrak M^{p_1}_{\varphi_1}(\mathbb R^d; w)\hookrightarrow
    M^{p_2}_{\varphi_2}(\mathbb R^d).
\end{align}
Then the following assertions hold.
\begin{enumerate}[\bfseries\upshape  (i)]
\item Assume that $\varphi_1$ satisfies \eqref{eq:LRC-rad} and $w\in\mathcal A_\infty$ satisfies the assumptions of Proposition~\ref{pro:regularity-cubes}.  
If $w\in\mathcal{A}_\infty\cap\mathcal{B}_{\varphi_1,p_1}$ and the embedding \eqref{eb:1Mfa-Mfb} holds, then $p_1\ge p_2$ and 
\begin{equation}\label{eq:1Mfab-con}
    \sup_{Q\in\mathcal Q}
    \frac{\varphi_2(\ell(Q))}{\varphi_1(\ell(Q))}
    \Big(\frac{w(Q)}{|Q|}\Big)^{-1/p_1}<\infty.
\end{equation}

\item If $p_2\le p_1$, $w\in\mathcal A_{p_1/p_2}$, and \eqref{eq:1Mfab-con} holds, then the embedding \eqref{eb:1Mfa-Mfb} is valid.

\item If $w\in\mathcal A_1\cap\mathcal{B}_{\varphi_1,p_1}$, then the embedding \eqref{eb:1Mfa-Mfb} holds if and only if $p_2\le p_1$ and \eqref{eq:1Mfab-con} holds.
\end{enumerate}
\end{theorem}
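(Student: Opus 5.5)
For part (i), the plan has two pieces. First, to show $p_2\le p_1$, we invoke Proposition~\ref{pro:regularity-cubes} to obtain a regularity cube $R$ for $w$, on which $c_R\le w\le C_R$. For $f$ supported in $R$, Remark~\ref{rem:Mfvw-transform} gives
\[
\|f\mid\mathfrak M^{p_1}_{\varphi_1}(\mathbb R^d;w)\|=\|fw^{1/p_1}\mid M^{p_1}_{\varphi_1}(\mathbb R^d)\|\sim\|f\mid M^{p_1}_{\varphi_1}(\mathbb R^d)\|.
\]
We then rerun the block-packing construction of Theorem~\ref{thm:nece-Omega} (Step 2). Its test functions $g_r$ are supported in a cube $Q_\ast\Subset R$, and with $\psi=\varphi_1$ (radial, so \eqref{eq:def-LC} and \eqref{eq:def-LRC} hold trivially and \eqref{eq:LRC-rad} gives the vanishing) they are uniformly bounded in $M^{p_1}_{\varphi_1}$. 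The case analysis there, including cubes larger than $R_\ast$, works verbatim on $\mathbb R^d$. On the other hand $\|g_r\mid M^{p_2}_{\varphi_2}\|\to\infty$ if $p_2>p_1$, which contradicts \eqref{eb:1Mfa-Mfb}.

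Second, to obtain \eqref{eq:1Mfab-con}, we test the embedding on $\chi_Q$. Since $w\in\mathcal B_{\varphi_1,p_1}$, Lemma~\ref{lem:1Q1w} gives $\|\chi_Q\mid\mathfrak M^{p_1}_{\varphi_1}(\mathbb R^d;w)\|\sim\varphi_1(\ell(Q))(w(Q)/|Q|)^{1/p_1}$. The analogue of Remark~\ref{rem:1Q-rad} on $\mathbb R^d$ gives $\|\chi_Q\mid M^{p_2}_{\varphi_2}\|\sim\varphi_2(\ell(Q))$. Dividing the two yields \eqref{eq:1Mfab-con}.

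For part (ii), we use the $\mathcal A_{p_1/p_2}$ inequality \eqref{eq:Ap1p2}:
\[
\Bigl(\frac1{|Q|}\int_Q|f|^{p_2}\Bigr)^{1/p_2}\lesssim\Bigl(\frac1{w(Q)}\int_Q|f|^{p_1}w\Bigr)^{1/p_1}=\Bigl(\frac{|Q|}{w(Q)}\Bigr)^{1/p_1}\Bigl(\frac1{|Q|}\int_Q|f|^{p_1}w\Bigr)^{1/p_1}.
\]
We multiply by $\varphi_2(\ell(Q))$ and write the factor as $\frac{\varphi_2}{\varphi_1}(w(Q)/|Q|)^{-1/p_1}$ times the Samko-type local quantity for $\varphi_1$. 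Then \eqref{eq:1Mfab-con} bounds the first factor, and taking the supremum over $Q$ gives the embedding.

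For part (iii), sufficiency follows from (ii), since $\mathcal A_1\subset\mathcal A_{p_1/p_2}$ when $p_2\le p_1$. Alternatively one can use \eqref{eq:pr-A1} followed by H\"older's inequality, exactly as in Theorem~\ref{thm:wMfa-Mfb'}. For necessity, \eqref{eq:1Mfab-con} follows from the characteristic function test, which only needs $\mathcal B_{\varphi_1,p_1}$. The condition $p_2\le p_1$ is then taken from (i), which assumes \eqref{eq:LRC-rad} and the regularity-cube hypotheses. These are not in the statement of (iii) as written, so I would either read them as implicitly inherited or argue differently.

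\textbf{Main obstacle.} Getting $p_2\le p_1$ in (iii) without regularity cubes is the hardest step, since an $\mathcal A_1$ weight may have none (Example~\ref{ex:no-regularity-cube}). Without the vanishing condition, Remark~\ref{rem:vanishing-psi} suggests the claim can even fail: take $\varphi_i\equiv1$, where both spaces reduce to $L^\infty$-type spaces. The first attempt would be to use $\mathcal A_1$ to pick a cube where $w$ is bounded below by its average, by \eqref{eq:pro-A1}, and to control it above on a large-measure subset via reverse H\"older. One would then run block packing on that subset. If this fails, the conclusion $p_2\le p_1$ in (iii) must be understood under the hypotheses of (i).
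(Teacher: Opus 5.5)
Your proposal is correct and follows the paper's proof: a regularity cube plus the block-packing argument gives $p_2\le p_1$, the characteristic-function test via Lemma~\ref{lem:1Q1w} gives \eqref{eq:1Mfab-con}, inequality \eqref{eq:Ap1p2} gives (ii), and the $\mathcal A_1$ estimate gives sufficiency in (iii). You rerun the block packing directly on $\mathbb R^d$ with $g_r$ supported in the regularity cube; this is a cleaner justification, because it avoids the transfer from the whole-space embedding to an embedding on the cube that the paper's citation of Proposition~\ref{pro:rad-Omega} leaves implicit. Your reading of (iii) also matches the paper: its proof refers the necessity of $p_2\le p_1$ back to (i), so it implicitly inherits \eqref{eq:LRC-rad} and the regularity-cube hypotheses, and your example $\varphi_1=\varphi_2\equiv1$, $w\equiv1$ correctly shows that these hypotheses cannot be dropped from the literal statement.
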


\begin{proof}
We first show (i). The necessity of \eqref{eq:1Mfab-con} follows from Lemma~\ref{lem:1Q1w}. Meanwhile, by Propositions~\ref{pro:regularity-cubes} and~\ref{pro:rad-Omega}, we conclude that the embedding \eqref{eb:1Mfa-Mfb} implies $p_1\geq p_2$.

For (ii), assume that $p_2\le p_1$, $w\in\mathcal A_{p_1/p_2}$, and \eqref{eq:1Mfab-con} holds. Then by \eqref{eq:Ap1p2}, for all cubes $Q\in \mathcal Q$,
\begin{align*}
\frac{\varphi_2(\ell(Q))}{|Q|^{1/p_2}}
\Big(\int_Q |f(x)|^{p_2}\,dx\Big)^{1/p_2}
&\lesssim
\frac{\varphi_2(\ell(Q))}{\varphi_1(\ell(Q))}
\Big(\frac{w(Q)}{|Q|}\Big)^{-1/p_1}
\frac{\varphi_1(\ell(Q))}{|Q|^{1/p_1}}
\Big(\int_Q |f(x)|^{p_1}w(x)\,dx\Big)^{1/p_1}\\
&\lesssim \|f\mid \mathfrak M^{p_1}_{\varphi_1}(\mathbb R^d; w)\|.
\end{align*}
Taking the supremum over all cubes $Q$ yields the desired embedding.

    To prove (iii), it suffices to prove the sufficiency of \eqref{eq:1Mfab-con}, since its necessity has already been established in (i). 
    Note that $w\in\mathcal A_1$ implies \eqref{eq:pr-A1}, thus we obtain
    \begin{align*}
        \frac{\varphi_2(\ell(Q))}{|Q|^{1/p_2}}\Big(\int_Q |f(x)|^{p_2}\,d x\Big)^{1/p_2}
        &\lesssim{\varphi_2(\ell(Q))}\Big(\frac{1}{w(Q)}\int_Q |f(x)|^{p_1}w(x)\,d x\Big)^{1/p_1}\\
        &= \frac{\varphi_2(\ell(Q))}{\varphi_1(\ell(Q))}\Big(\frac{w(Q)}{|Q|}\Big)^{-1/p_1}\frac{\varphi_1(\ell(Q))}{|Q|^{1/p_1}}\Big(\int_Q |f(x)|^{p_1}w(x)\,d x\Big)^{1/p_1}\\
        &\lesssim\|f\mid \mathfrak M^{p_1}_{\varphi_1}(\mathbb R^d; w)\|.
    \end{align*}
    Hence we complete the proof.
\end{proof}

\begin{remark}\label{rem:1w1w2}
Let $0<p_2<p_1<\infty$, $\varphi_i\in\mathcal G_{p_i}$, and
 $w_i$ be weights, $i=1,2$. Assume that there exists
$\gamma\in(1,\infty)$, with $\gamma'=\gamma/(\gamma-1)$, such that
\[
\begin{split}
&\sup_{Q\in\mathcal Q}
\frac{\varphi_2(\ell(Q))}{\varphi_1(\ell(Q))}
|Q|^{\frac1{p_1}-\frac1{p_2}}
\Bigl(\int_Q
w_1(x)^{-\gamma\frac{p_2}{p_1-p_2}}\,dx\Bigr)^{
\frac1\gamma(\frac1{p_2}-\frac1{p_1})}
\\
&\qquad\times
\Bigl(\int_Q
w_2(x)^{\gamma'\frac{p_1}{p_1-p_2}}\,dx\Bigr)^{
\frac1{\gamma'}(\frac1{p_2}-\frac1{p_1})}
<\infty.
\end{split}
\]
Then applying H\"older's inequality twice yields 
\[
\mathfrak M^{p_1}_{\varphi_1}(\mathbb R^d;w_1)
\hookrightarrow
\mathfrak M^{p_2}_{\varphi_2}(\mathbb R^d;w_2).
\]
\end{remark}

Next we turn to the spaces $\mathfrak M_u^p(\mathbb R^d; w)$. In the class $\mathcal A_1$, we obtain a rigidity phenomenon.

\begin{proposition}\label{pro:1Ma-Mb}
Let $0<p_i\le u_i<\infty$, $i=1,2$. Consider the embedding 
\begin{align}\label{M1wa-Mb} 
    \mathfrak M_{p_1}^{u_1}(\mathbb R^d; w)\hookrightarrow\mathfrak M_{p_2}^{u_2}(\mathbb R^d).
\end{align}
Then the following assertions hold.
\begin{enumerate}[\bfseries\upshape  (i)]
    \item If $w\in \mathcal{A}_\infty\cap \mathcal{B}_{u_1,p_1}$ and \eqref{M1wa-Mb} holds, then 
    \begin{align}
        &p_1\ge p_2,\qquad u_1\ge u_2, \label{eq:1Mab-cond1}\\
        &\sup_{Q\in\mathcal Q}|Q|^{\frac{1}{u_2}+\frac{1}{p_1}-\frac{1}{u_1}}w(Q)^{-1/p_1}<\infty, \label{eq:1Mab-cond2}\\
        &\frac{p_1}{u_2}-\frac{p_1}{u_1}+1\le q_w. \label{eq:1Mab-cond3}
    \end{align}

    \item If $p_2\le p_1$, $w\in\mathcal A_{p_1/p_2}$ and \eqref{eq:1Mab-cond2} holds, then the embedding \eqref{M1wa-Mb} holds.

    \item If $w\in\mathcal A_1\cap \mathcal{B}_{u_1,p_1}$, then the embedding \eqref{M1wa-Mb} holds if and only if 
    \begin{align}\label{eq:1Mab-c}
        u_1=u_2,\quad p_1\ge p_2,\quad \inf_{Q\in\mathcal Q:\,|Q|=1}w(Q)>0.
    \end{align}
\end{enumerate}
\end{proposition}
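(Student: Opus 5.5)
The plan is to reduce everything to Theorem~\ref{thm:1Mfa-Mfb} with the choice $\varphi_i(r)=r^{d/u_i}$, $i=1,2$. Here the target $\mathfrak M_{p_2}^{u_2}(\mathbb R^d)$ is the unweighted space $M^{p_2}_{u_2}(\mathbb R^d)$. Since $p_i\le u_i$, we have $\varphi_i\in\mathcal G_{p_i}$ and $\varphi_1(r)\to0$ as $r\to0^+$, so \eqref{eq:LRC-rad} holds. Moreover, $\mathcal B_{\varphi_1,p_1}=\mathcal B_{u_1,p_1}$ by Remark~\ref{rem:Bvp}. With this choice,
\[
\frac{\varphi_2(\ell(Q))}{\varphi_1(\ell(Q))}\Bigl(\frac{w(Q)}{|Q|}\Bigr)^{-1/p_1}
=|Q|^{\frac1{u_2}+\frac1{p_1}-\frac1{u_1}}\,w(Q)^{-1/p_1},
\]
so \eqref{eq:1Mfab-con} is exactly \eqref{eq:1Mab-cond2}.

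\emph{Part (ii).} This is Theorem~\ref{thm:1Mfa-Mfb}(ii) verbatim, applied with the choice of $\varphi_i$ above.

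\emph{Part (i).} We proceed in four steps.

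\textbf{Step 1: condition \eqref{eq:1Mab-cond2}.} Testing on characteristic functions, via Lemma~\ref{lem:1Q1w} and \eqref{eq:norm-1Q1w'} (this is where $w\in\mathcal B_{u_1,p_1}$ is used), gives \eqref{eq:1Mab-cond2}.

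\textbf{Step 2: $p_1\ge p_2$.} This follows as in Theorem~\ref{thm:1Mfa-Mfb}(i), by applying Proposition~\ref{pro:rad-Omega} on a regularity cube. I expect this to be the delicate point. By Example~\ref{ex:no-regularity-cube}, a general $\mathcal A_\infty$ weight need not admit a regularity cube. The argument therefore has to invoke the hypotheses of Proposition~\ref{pro:regularity-cubes} implicitly, exactly as Theorem~\ref{thm:1Mfa-Mfb}(i) does, and I would state this explicitly.

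\textbf{Step 3: $u_1\ge u_2$.} Here I avoid regularity cubes and use Lebesgue's differentiation theorem instead. Choose $x$ with $0<w(x)<\infty$ that is a Lebesgue point of $w$. Then $w(Q)/|Q|\to w(x)$ for cubes $Q$ centered at $x$ with $\ell(Q)\to0$. Rewriting \eqref{eq:1Mab-cond2} as
\[
|Q|^{\frac1{u_2}-\frac1{u_1}}\bigl(w(Q)/|Q|\bigr)^{-1/p_1}\lesssim 1
\]
forces $|Q|^{1/u_2-1/u_1}\lesssim1$ as $|Q|\to0$, hence $u_2\le u_1$.

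\textbf{Step 4: condition \eqref{eq:1Mab-cond3}.} Take any $q>q_w$, so that $w\in\mathcal A_q$. Fix a unit cube $E$ and let $Q\supset E$ be a cube of side $R\to\infty$. Applying \eqref{eq:EB} (with cubes in place of balls, by doubling) gives $w(Q)\lesssim w(E)\,R^{dq}$. Inserting this bound into \eqref{eq:1Mab-cond2} yields
\[
R^{d(\frac1{u_2}+\frac1{p_1}-\frac1{u_1})}\lesssim R^{dq/p_1}.
\]
Letting $R\to\infty$ gives $\frac{p_1}{u_2}-\frac{p_1}{u_1}+1\le q$. Letting $q\downarrow q_w$ then gives \eqref{eq:1Mab-cond3}.

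\emph{Part (iii).} Let $w\in\mathcal A_1$, so that $q_w=1$.

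\textbf{Necessity.} Condition \eqref{eq:1Mab-cond3} gives $u_1\le u_2$, and together with $u_1\ge u_2$ from (i) this yields $u_1=u_2$. Then \eqref{eq:1Mab-cond2} reads $\sup_Q |Q|/w(Q)<\infty$; restricting it to unit cubes gives $\inf_{|Q|=1}w(Q)>0$. The inequality $p_1\ge p_2$ comes from Step~2 of (i). Alternatively, the $\mathcal A_1$ condition together with the lower bound below shows $w\sim$ const on small cubes, so Proposition~\ref{pro:rad-Omega} applies directly.

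\textbf{Sufficiency.} Set $c:=\inf_{|Q|=1}w(Q)>0$. For a.e.\ $x$, pick a unit cube $Q\ni x$; Definition~\ref{def:Ap}(ii) gives
\[
w(x)\ge A^{-1}\mathbf Mw(x)\gtrsim w(Q)/|Q|\ge c.
\]
Thus $w\ge c'>0$ a.e., hence $w(Q)\ge c'|Q|$ for every cube. Since $u_1=u_2$, this is precisely \eqref{eq:1Mab-cond2}. Finally, $\mathcal A_1\subset\mathcal A_{p_1/p_2}$ when $p_2\le p_1$, so part (ii) gives the embedding.
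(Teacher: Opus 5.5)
Your proposal is correct and is essentially the paper's own proof. The shared steps are:
\begin{itemize}
\item the reduction to Theorem~\ref{thm:1Mfa-Mfb} with $\varphi_i(r)=r^{d/u_i}$;
\item the Lebesgue-point argument for $u_1\ge u_2$;
\item the estimate \eqref{eq:EB} on cubes containing a fixed unit cube, which gives \eqref{eq:1Mab-cond3};
\item in (iii), the a.e.\ lower bound on $w$ coming from the $\mathcal A_1$ condition.
\end{itemize}
The only difference is that in (iii) you then quote (ii) via $\mathcal A_1\subset\mathcal A_{p_1/p_2}$, whereas the paper writes out the $\mathcal A_1$--H\"older estimate directly; this amounts to the same thing. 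Your caveat in Step~2 is also accurate. The paper obtains $p_1\ge p_2$ in (i) and (iii) through Theorem~\ref{thm:1Mfa-Mfb}(i) and Proposition~\ref{pro:regularity-cubes}. It therefore tacitly uses regularity-cube hypotheses that the statement does not list.

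One correction: drop the ``alternative'' argument in the necessity part of (iii). The $\mathcal A_1$ condition together with a positive lower bound does not make $w$ comparable to a constant on small cubes. A counterexample is $w^\ast+1$, with $w^\ast$ from Example~\ref{ex:no-regularity-cube} and $0<\gamma<d$, $\gamma\le dp_1/u_1$:
\begin{itemize}
\item it belongs to $\mathcal A_1$;
\item it belongs to $\mathcal B_{u_1,p_1}$, because $\Phi_{u_1,p_1,w}(Q)^{p_1}$ is additive in $w$ and every translate of $|x|^{-\gamma}$ lies in $\mathcal B_{u_1,p_1}$ with a uniform constant;
\item it satisfies $w\ge1$;
\item it is essentially unbounded on every cube.
\end{itemize}
So $p_1\ge p_2$ in (iii) still rests on the regularity-cube assumption, exactly as in your main argument.
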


\begin{proof}
\emph{Step }1. 
The assertions $p_1\ge p_2$ and \eqref{eq:1Mab-cond2} in (i), as well as the sufficiency in (ii) follow from Theorem~\ref{thm:1Mfa-Mfb} by choosing $\varphi_i(t)=t^{d/u_i}$, $i=1,2$.

To show the necessity of $u_1\ge u_2$. Choose a Lebesgue point $x_0$ of $w$ such that 
$0<w(x_0)<\infty$. For cubes $Q_r$ centered at $x_0$ with side length $r$, $\frac{w(Q_r)}{|Q_r|}\to w(x_0)$, $r\downarrow 0$. Condition~\eqref{eq:1Mab-cond2} can be written as 
\[
|Q_r|^{1/u_2-1/u_1}\Bigl(\frac{w(Q_r)}{|Q_r|}\Bigr)^{-1/p_1}\lesssim 1.
\]
Since the second factor in the above estimate converges to a finite positive number, letting $r\downarrow 0$, we obtain $\frac{1}{u_2}-\frac{1}{u_1}\ge 0$, which is equivalent to $u_1\ge u_2$.

It remains to show condition~\eqref{eq:1Mab-cond3} in (i) is necessary. Let $w\in\mathcal A_q$ with $q>1$ and $Q\supset E:=[0,1]^d$. Then, by \eqref{eq:EB} and  \eqref{eq:1Mab-cond2}, we have 
\[
|Q|^{\frac{p_1}{u_2}-\frac{p_1}{u_1}+1} \lesssim w(Q) \lesssim |Q|^q.
\]
Taking the limit with $|Q|\rightarrow\infty$ and then the infimum over possible $q$ we obtain $\frac{p_1}{u_2}-\frac{p_1}{u_1}+1\leq q_w$.

\emph{Step }2. Now we prove (iii) and first show sufficiency of \eqref{eq:1Mab-c}. Since $w\in\mathcal A_1$, we have
\[
\frac1{|Q|}\int_Q g(x)\,dx
\lesssim
\frac{1}{w(Q)}\int_Q g(x)w(x)\,dx
\qquad \text{for all } g\ge0.
\]
Applying this with $g=|f|^{p_1}$ yields
\begin{align*}
    \Big(\int_Q |f(x)|^{p_2}\,dx\Big)^{1/p_2}
&\lesssim
|Q|^{1/p_2-1/p_1}
\Big(\int_Q |f(x)|^{p_1}\,dx\Big)^{1/p_1}\\
&\lesssim
|Q|^{1/p_2}w(Q)^{-1/p_1}
\Big(\int_Q |f(x)|^{p_1}w(x)\,dx\Big)^{1/p_1}.
\end{align*}
Hence
\[
|Q|^{\frac1{u_2}-\frac1{p_2}}
\Big(\int_Q |f(x)|^{p_2}\,dx\Big)^{1/p_2}
\lesssim
|Q|^{\frac1{u_2}-\frac1{u_1}+\frac1{p_1}}w(Q)^{-1/p_1}
|Q|^{\frac1{u_1}-\frac1{p_1}}
\Big(\int_Q |f(x)|^{p_1}w(x)\,dx\Big)^{1/p_1}.
\]
Since $u_1=u_2$, it remains to show that ${|Q|}/{w(Q)}$ is bounded uniformly over all cubes $Q$.
Indeed, set
\[
c_0:=\inf_{R\in\mathcal Q:\, |R|=1} w(R)>0.
\]
For all cubes $R\in\mathcal Q$ with $|R|=1$, the $\mathcal A_1$ condition gives
\[
c_0\le w(R)
=
\frac1{|R|}\int_R w(x)\,dx
\le [w]_{\mathcal A_1}\operatorname*{ess\,inf}_{x\in R}w(x),
\]
by \eqref{eq:pro-A1}. 
Hence $\operatorname*{ess\,inf}_{R}w\ge c_0/[w]_{\mathcal A_1}$ for all unit cubes $R$, which further implies that 
$w(x)\ge c_0/[w]_{\mathcal A_1}$ for a.e. $x\in\mathbb R^d$. 
Therefore, for all cubes $Q$,
\[
w(Q)=\int_Q w(x)\, dx\ge \frac{c_0}{[w]_{\mathcal A_1}}\,|Q|,
\]
and consequently
\[
\sup_{Q\in\mathcal Q}\Bigl(\frac{|Q|}{w(Q)}\Bigr)^{1/p_1}\le \Bigl(\frac{[w]_{\mathcal A_1}}{c_0}\Bigr)^{1/p_1}<\infty.
\]
Thus the factor $(|Q|/w(Q))^{1/p_1}$ is uniformly bounded.
Taking the supremum over all cubes yields
\[
\|f\mid \mathfrak M_{p_2}^{u_2}(\mathbb R^d)\|
\lesssim
\|f\mid \mathfrak M_{p_1}^{u_1}(\mathbb R^d;w)\|,
\]
which proves the embedding.

\emph{Step }3. It remains to prove necessity of \eqref{eq:1Mab-c}. Assume that \eqref{M1wa-Mb} holds. Then, by \eqref{eq:norm-1Q1w'}, there exists a constant $C>0$ such that for every cube $Q$,
\begin{equation}\label{eq:assi-4.11}
|Q|^{\frac1{u_2}-\frac1{u_1}+\frac1{p_1}}
\le C\,w(Q)^{1/p_1}.
\end{equation}
Taking $|Q|=1$ yields
\[
\inf_{Q\in\mathcal Q:\,|Q|=1}w(Q)\ge C^{-p_1}>0.
\]

We now show that $u_1=u_2$. By (i), the embedding implies $u_1\ge u_2$ and
\[
1+\frac{p_1}{u_2}-\frac{p_1}{u_1}\le q_w.
\]
Since $w\in\mathcal A_1$, we have $q_w=1$. Hence
\[
\frac1{u_2}-\frac1{u_1}\le0,
\]
which means $u_1\le u_2$. Therefore $u_1=u_2$.

Finally, the necessity of $p_1\ge p_2$ follows from Proposition~\ref{pro:regularity-cubes} and \eqref{eb:Ma-Mb-Omega}. This completes the proof.
\end{proof}

\begin{remark}[Comparison with the Komori--Shirai case]
\label{rem:A1-rigidity-com}
It was proved in \cite[Theorem~4.7]{HS17} that, for
$0<p_i\le u_i<\infty$,
\[
\mathcal M_{p_1}^{u_1}(\mathbb R^d;w)
\hookrightarrow
M_{p_2}^{u_2}(\mathbb R^d),
\qquad w\in\mathcal A_1,
\]
holds if and only if 
\[
u_1=u_2,\qquad p_2\le p_1,\qquad
\inf_{Q\in\mathcal Q:\, |Q|=1} w(Q)>0.
\]
Proposition~\ref{pro:1Ma-Mb} shows that the same rigidity phenomenon holds for the Samko-type spaces. 
Thus, in the $\mathcal A_1$-case, the Morrey parameter is rigid: the embedding forces $u_1=u_2$.
\end{remark}

We conclude with the compactness question. The classical embedding $L^{p_1}(\Omega)\hookrightarrow L^{p_2}(\Omega)$ on a bounded domain is continuous but not compact when $p_1\ge p_2$; see \cite[p.~95]{PKJF13}. This phenomenon extends to suitable weighted Lebesgue spaces \cite{HT94}. Here we show that the same non-compactness persists in the present Morrey-type setting, provided the weight admits a regularity cube; see Proposition~\ref{pro:regularity-cubes} and also \cite[Corollary~2.7]{HS17}.

\begin{proposition}
\label{prop:compact}
Let $0<p_i<\infty$, $\varphi_i=\varphi_i(r)\in\mathcal G_{p_i}$, $i=1,2$, and let
$w$ be a weight which admits a regularity cube as in Proposition~\ref{pro:regularity-cubes}. If
\begin{equation}\label{eb:no-compact}
\mathfrak M^{p_1}_{\varphi_1}(\mathbb R^d;w)
\hookrightarrow
M^{p_2}_{\varphi_2}(\mathbb R^d)
\end{equation}
is continuous, then this embedding is not compact.

The same conclusion holds for the corresponding classical spaces
$\mathfrak M_u^p(\mathbb R^d;w)$, and for the Komori--Shirai-type spaces
$\mathcal M_\varphi^p(\mathbb R^d;w)$ and $\mathcal M_u^p(\mathbb R^d;w)$.
\end{proposition}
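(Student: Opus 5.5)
Fix a regularity cube $R=Q(x_R,\ell_R)$ for $w$, so that $c_R\le w\le C_R$ a.e. on $R$. The plan is to exhibit a bounded sequence $(f_k)$ in $\mathfrak M^{p_1}_{\varphi_1}(\mathbb R^d;w)$ that has no Cauchy subsequence in $M^{p_2}_{\varphi_2}(\mathbb R^d)$. For this I use Rademacher-type oscillatory functions supported in $R$. Split $R$ into $2^{kd}$ dyadic subcubes $R_{k,m}$ of side $2^{-k}\ell_R$, and write $R=R^+_k\cup R^-_k$, where $R^\pm_k$ are the unions of the subcubes lying on alternating layers in the first coordinate. Then set $f_k:=\chi_{R^+_k}-\chi_{R^-_k}$. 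Each $f_k$ satisfies $|f_k|=\chi_R$ a.e.

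\emph{Step 1 (uniform bound in the source).} Since $|f_k|=\chi_R$, we have $\|f_k\mid \mathfrak M^{p_1}_{\varphi_1}(\mathbb R^d;w)\|=\|\chi_R\mid\mathfrak M^{p_1}_{\varphi_1}(\mathbb R^d;w)\|$. By Remark~\ref{rem:Mfvw-transform} this equals $\|\chi_R w^{1/p_1}\mid M^{p_1}_{\varphi_1}(\mathbb R^d)\|\le C_R^{1/p_1}\|\chi_R\mid M^{p_1}_{\varphi_1}(\mathbb R^d)\|$. The last quantity is finite and independent of $k$ by the unweighted analogue of Lemma~\ref{lem:1Q-Omega} (the whole-space version of Remark~\ref{rem:1Q-rad}, or Lemma~\ref{lem:1Qww} with $w\equiv1$). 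No $\mathcal B_{\varphi_1,p_1}$ assumption is needed here, because the weight only enters through the upper bound $C_R$ on $R$ and vanishes outside $R$.

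\emph{Step 2 (separation in the target).} For $j<k$ the difference $f_j-f_k$ takes the values $0$ and $\pm2$. I claim it equals $\pm2$ on exactly half of $R$. The reason is that on each subcube of generation $j$ the function $f_j$ is constant, while $f_k$ is balanced there (equal measure of $\pm1$, since the finer layers split each coarse layer evenly). Testing the target quasi-norm on the single cube $R$ gives
\[
\|f_j-f_k\mid M^{p_2}_{\varphi_2}(\mathbb R^d)\|\ge\varphi_2(\ell_R)\Bigl(\frac{1}{|R|}\int_R|f_j-f_k|^{p_2}\Bigr)^{1/p_2}=2^{1-1/p_2}\varphi_2(\ell_R)>0.
\]
So $(f_k)$ has no subsequence that is Cauchy, and hence none that is convergent, in the target (a quasi-Banach space, so the argument goes through with quasi-norms). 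Compactness would give a convergent subsequence of the images of a bounded sequence, so the embedding is not compact. The main point to check carefully is the balancing claim; taking alternating layers in one coordinate direction makes it an elementary counting fact.

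\emph{Step 3 (other scales).} The classical Samko case is $\varphi_1(t)=t^{d/u}$. For the Komori--Shirai spaces I run the same sequence. Again $|f_k|=\chi_R$, so the source norm equals $\|\chi_R\mid\mathcal M^{p_1}_{\varphi_1}(\mathbb R^d;w)\|$, which is a finite constant independent of $k$. This follows from Lemma~\ref{lem:1Qww} when $\varphi_1\in\mathcal G^w_{p_1}$, and in general it is finite as long as $\chi_R$ lies in the space, which continuity of the embedding presupposes. The target estimate of Step~2 is unchanged, so the conclusion holds there as well.
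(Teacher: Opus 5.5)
Your proposal is correct and follows the paper's proof essentially step for step. The paper also uses Rademacher-type functions supported on the regularity cube, gets a $k$-independent source norm from $|f_k|=\chi_R$, and obtains the separation $2^{1-1/p_2}\varphi_2(\ell_R)$ by testing the target quasi-norm on $R$ itself; your explicit layer construction and your use of only the upper bound $w\le C_R$ are harmless refinements.

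The one thing to drop is the claim in Step~3 that continuity of the embedding ``presupposes'' $\chi_R$ lies in the Komori--Shirai source space. That is false: for $w\equiv1$ and $\varphi_1(x,r)=e^r$ the source space is $\{0\}$, and the embedding is trivially compact. So in that case you genuinely need a hypothesis such as $\varphi_1\in\mathcal G^w_{p_1}$, which gives the required finiteness via Lemma~\ref{lem:1Qww}.
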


\begin{proof}

Let $Q_0$ be a regularity cube for $w$. By Proposition~\ref{pro:regularity-cubes}, $w(x) \sim 1$ for a.e. $x\in Q_0$.
Choose a cube $P\subset Q_0$ with positive measure. On functions supported in $P$, the weighted and unweighted local $L^{p_1}$-quantities are equivalent.

Let $(r_j)_{j\in\mathbb N}$ be a sequence of Rademacher-type functions defined on $P$. 
Specifically, we choose $(r_j)_{j\in\mathbb N}$ such that
\[
|r_j(x)|=1 \quad\text{for a.e. }x\in P,
\]
and for any $j\neq k$, 
\[
|\{x\in P:r_j(x)\neq r_k(x)\}|=\frac{|P|}{2}.
\]

Set
\[
f_j:=r_j\chi_P,\qquad j\in\mathbb N.
\]
Since $|f_j|=\chi_P$, the source norms of all $f_j$ are equal. 
Moreover, $w\sim 1$ a.e. on $P$ yields 
\[
\int_Q\chi_P(x)w(x)\,dx\sim  |Q\cap P|.
\]
for all cubes $Q$. 
Hence
\[
0<
A:=
\|\chi_P\mid
\mathfrak M^{p_1}_{\varphi_1}(\mathbb R^d;w)\|
\sim 
\|\chi_P\mid M^{p_1}_{\varphi_1}(\mathbb R^d)\|
<\infty.
\]
Hence the normalized sequence $g_j:=A^{-1}f_j$
satisfies
\[
\|g_j\mid \mathfrak M^{p_1}_{\varphi_1}(\mathbb R^d;w)\|=1,
\qquad j\in\mathbb N.
\]
Thus $(g_j)_{j\in\mathbb N}$ is bounded in the space $\mathfrak M^{p_1}_{\varphi_1}(\mathbb R^d;w)$.

We next show that it has no convergent subsequence in the space $M^{p_2}_{\varphi_2}(\mathbb R^d)$. 
For $j\neq k$, 
\[
\|g_j-g_k\mid M^{p_2}_{\varphi_2}(\mathbb R^d)\|
\ge
A^{-1}\frac{\varphi_2(\ell(P))}{|P|^{1/p_2}}
\Bigl(\int_P |r_j(x)-r_k(x)|^{p_2}\,dx\Bigr)^{1/p_2}.
\]
Since $|r_j-r_k|=2$ on a set of measure $|P|/2$, we obtain
\[
\Bigl(\int_P |r_j(x)-r_k(x)|^{p_2}\,dx\Bigr)^{1/p_2}
=
\Bigl(2^{p_2}\frac{|P|}{2}\Bigr)^{1/p_2}
=
2^{1-1/p_2}|P|^{1/p_2}.
\]
Consequently,
\[
\|g_j-g_k\mid M^{p_2}_{\varphi_2}(\mathbb R^d)\|
\ge
A^{-1}2^{1-1/p_2}\varphi_2(\ell(P))
=:c>0
\]
for all $j\neq k$. Therefore, $(g_j)_{j\in\mathbb N}$ has no Cauchy subsequence in the target space $M^{p_2}_{\varphi_2}(\mathbb R^d)$, and hence no convergent subsequence.
Thus the image of the bounded sequence $(g_j)_{j\in\mathbb N}$ under the embedding is not relatively compact, so the embedding cannot be compact.

The same argument applies to
$\mathfrak M_u^p(\mathbb R^d;w)$ by taking
$\varphi(t)=t^{d/u}$, and to the Komori--Shirai-type spaces
$\mathcal M_\varphi^p(\mathbb R^d;w)$ and
$\mathcal M_u^p(\mathbb R^d;w)$, since the argument uses only the
local equivalence of the weighted and unweighted quantities on the
regularity cube and a lower estimate for the target quasi-norm on the
fixed cube $P$. For $\mathcal M_u^p(\mathbb R^d;w)$, the conclusion
also follows from \cite[Corollary~4.10]{HS17}.
\end{proof}

\section*{Acknowledgements}

The author wishes to express sincere gratitude to \emph{Key Laboratory of Mathematics and Complex Systems, Ministry of Education, China} for the financial support that enabled the author to visit Professor Dorothee D. Haroske. 
The author is also grateful to Professor Dorothee D. Haroske for the discussions and suggestions concerning Section \ref{ssec:Rc}.

\section*{Declaration on the use of AI}
During the preparation of this manuscript, the author used OpenAI's ChatGPT to assist in developing an initial candidate for Example~\ref{ex:no-regularity-cube} and in improving the language and presentation. 
All AI-generated suggestions were independently checked, revised, and verified by the author. 
The author takes full responsibility for the mathematical content and the final version of the manuscript.

\section*{Conflict of Interest}

The author declares no conflict of interest.

\section*{Data Availability Statement}

Data sharing is not applicable to this article as no datasets were generated or analysed during the current study.


\bigskip~

{\small
\noindent
Zhen Liu\\
Laboratory of Mathematics and Complex Systems (Ministry of Education of China)\\
School of Mathematical Sciences \\
Beijing Normal University\\
100875 Beijing\\
China\\
{\tt zhen.liu@bnu.edu.cn}
}

\end{document}